\title{On the Monoidal Center of Deligne's Category \texorpdfstring{$\uRep(S_\MakeLowercase{t})$}{Rep(St)}}
\date{\today}
\author{Johannes Flake}
\address{Lehrstuhl B f\"ur Mathematik (Algebra), RWTH Aachen University,
Pontdriesch 10-16,
52062 Aachen, Germany}
\email{flake@mathb.rwth-aachen.de}
\author{Robert Laugwitz}
\address{School of Mathematical Sciences, University of Nottingham,
University Park,
Nottingham, NG7 2RD,
United Kingdom}
\email{robert.laugwitz@nottingham.ac.uk}
\urladdr{https://www.nottingham.ac.uk/mathematics/people/robert.laugwitz}
\tikzset{
 mytikzlink/.style={
  line width=0.6pt,
  preaction={draw, {}-{}, line width=3pt, white}
 }
}
\newcommand\torusknotinv[2]{
 \tikz[scale=0.35, yscale=-1,
  baseline={([yshift=-0.35ex]current bounding box.center)}, looseness=1.2] {
 \useasboundingbox (0,0) circle (2.2);
 \def\n{#2} \def\w{360/\n} \def\dw{#1*\w/2}
 \foreach \pm in {-1,1} {
 \foreach \i in {1,...,\n} {
  \draw[mytikzlink] (\i*\w:0.8)
  to[out=\i*\w+\pm*90, in=\i*\w+\pm*\dw-\pm*90]
  (\i*\w+\pm*\dw:2);
 }}
 } 
}
\newcommand\torusknotinvsmall[2]{
 \tikz[scale=0.2, yscale=-1,
  baseline={([yshift=-0.35ex]current bounding box.center)}, looseness=1.2] {
 \useasboundingbox (0,0) circle (2.2);
 \def\n{#2} \def\w{360/\n} \def\dw{#1*\w/2}
 \foreach \pm in {-1,1} {
 \foreach \i in {1,...,\n} {
  \draw[mytikzlink] (\i*\w:0.8)
  to[out=\i*\w+\pm*90, in=\i*\w+\pm*\dw-\pm*90]
  (\i*\w+\pm*\dw:2);
 }}
 } 
}
\tikzset{
	partition/.style={
      scale=0.5,
      yscale=-1,
      baseline={([yshift=-0.5ex]current bounding box.center)}
    }
}
\newcommand\makeDot[2]{\draw[fill] (#1,#2) circle (2.5pt);}
\newcommand\makeDots[2]{\makeDot{#1}{#2} \makeDot{#1}{#2+1}}
\newcommand\tpid[2][1]{%
\foreach \x in {#2} \draw (\x,#1) -- +(0,1);
}
\newcommand\tpart[2]{\tikz[partition]{%
\foreach \i [count=\c] in {#1} \relax
\draw[white] (1,1)--(1,\c); 
\foreach \i [count=\y] in {#1} {
  \ifnum \i>0 \foreach \x in {1,...,\i} \makeDot{\x}{\y};
  \fi
}
#2
}}
\newcommand\tpartx[2]{\tikz[partition]{%
\draw (-1.2,1)--node[right]{...} (-1.2,2) (0,1)--(0,2);
\makeDot{-1.2}{1}\makeDot{-1.2}{2}
\makeDot{0}{1}\makeDot{0}{2}
\foreach \i [count=\c] in {#1} \relax
\draw[white] (1,1)--(1,\c); 
\foreach \i [count=\y] in {#1} {
  \ifnum \i>0 \foreach \x in {1,...,\i} \makeDot{\x}{\y};
  \fi
}
#2
}}
\tikzset{
    bend/.cd,
    0/.style={},
    1/.style={bend right},
    -1/.style={bend left}
}
\newcommand\makePartPt[1]{({Mod(#1,10)},{(#1-Mod(#1,10))*.1})}
\newcommand\makePartLn[2]{%
\pgfmathtruncatemacro\bend{%
(int(#1/10)==int(#2/10) && abs(#1-#2)>1) ?
(#1<10 ? 1 : -1)*(#1>#2 ? 1 : -1)
: 0%
}
\draw ({mod(#1,10)},{int(#1/10}) to[bend/\bend] ({mod(#2,10)},{int(#2/10)});
}
\newcommand\tp[1] {%
\tikz[partition] {
\def\j{0}
\foreach \i [remember=\i as \j] in {#1} {
  \ifnum \i>0
    \ifnum \j>0
      \makePartLn{\i}{\j};
    \fi
    \draw[fill] \makePartPt\i circle (2.5pt);
  \fi
} 
}}
\newcommand{\bigslant}[2]{{\raisebox{.2em}{$#1$}\left/\raisebox{-.2em}{$#2$}\right.}}
\newcommand{\leftexpsub}[3]{{\vphantom{#3}}^{#1}_{#2}{#3}}
\newcommand{\lYD}[1]{\leftexpsub{#1}{#1}{\mathbf{YD}}}
\newcommand{\bunderline}[1]{\mkern1mu\underline{\mkern-1mu#1\mkern-7mu}\mkern7mu }
\newcommand{\uD}[1]{{\mkern1mu\underline{\mkern-1mu D\mkern-4mu}\mkern4mu}_{\mkern-2mu#1}}
\newcommand{\uV}[1]{\bunderline{V}^{#1}}
\newcommand{\uW}[1]{\bunderline{W}_{\mkern-7mu#1}}
\newcommand{\Set}[1]{\left\lbrace #1\right\rbrace}
\newcommand{\isomorph}{\stackrel{\sim}{\longrightarrow}}
\newcommand\inv{^{-1}}
\newcommand{\op}[1]{\operatorname{#1}}
\newcommand{\ov}[1]{\overline{#1}}
\newcommand{\coev}{\operatorname{coev}}
\newcommand{\Drin}{\operatorname{Drin}}
\newcommand{\ev}{\operatorname{ev}}
\newcommand{\End}{\operatorname{End}}
\newcommand{\Hom}{\operatorname{Hom}}
\newcommand{\ide}{\operatorname{Id}}
\newcommand{\Ind}{\operatorname{Ind}}
\newcommand{\reg}{\mathrm{reg}}
\newcommand{\Rep}{\mathrm{Rep}}
\newcommand{\uRep}{\protect\underline{\mathrm{Re}}\mathrm{p}}
\newcommand{\tr}{\operatorname{tr}}
\newcommand{\triv}{\operatorname{triv}}
\providecommand{\cal}[1]{\mathcal{#1}}
\providecommand{\fr}[1]{\mathfrak{#1}}
\newcommand{\mR}{\mathbb{R}}
\newcommand{\mZ}{\mathbb{Z}}
\newcommand{\mN}{\mathbb{N}}
\newcommand{\cC}{\mathcal{C}}
\newcommand{\cD}{\mathcal{D}}
\newcommand{\cF}{\mathcal{F}}
\newcommand{\cI}{\mathcal{I}}
\newcommand{\cL}{\mathcal{L}}
\newcommand{\cT}{\mathcal{T}}
\newcommand{\cN}{\mathcal{N}}
\newcommand{\cQ}{\mathcal{Q}}
\newcommand{\cR}{\mathcal{R}}
\newcommand{\cS}{\mathcal{S}}
\newcommand{\cZ}{\mathcal{Z}}
\newcommand{\rF}{\mathrm{F}}
\newcommand{\rP}{\mathrm{P}}
\numberwithin{equation}{section}
\newtheorem*{rep@theorem}{\rep@title}
\newcommand{\newreptheorem}[2]{%
\newenvironment{rep#1}[1]{%
 \def\rep@title{#2 \ref{##1}}%
 \begin{rep@theorem}}%
 {\end{rep@theorem}}}
\newtheorem{theorem}{Theorem}[section]
\newtheorem{proposition}[theorem]{Proposition}
\newtheorem{corollary}[theorem]{Corollary}
\newtheorem{lemma}[theorem]{Lemma}
\newtheorem{theorem*}{Theorem}
\theoremstyle{definition}
\newtheorem{definition}[theorem]{Definition}
\theoremstyle{remark}
\newtheorem{example}[theorem]{Example}
\newtheorem{remark}[theorem]{Remark}
\newtheorem{question}[theorem]{Question}
\renewcommand{\sectionmark}[1]		
	{
	\markboth{\small\it \thesection{} #1}{}
	}
\subjclass[2010]{Primary 18D10; Secondary 05E10, 57M27}
\keywords{Monoidal Center, Deligne's Interpolation Category, Ribbon Category, Ribbon Link Invariants}
\begin{document}

\begin{abstract}
We explicitly compute a monoidal subcategory of the monoidal center of Deligne's interpolation category $\uRep(S_t)$, for $t$ not necessarily a natural number, and we show that this subcategory is a ribbon category. For $t=n$, a natural number, there exists a functor onto the braided monoidal category of modules over the Drinfeld double of $S_n$ which is essentially surjective and full. Hence the new ribbon categories interpolate the categories of crossed modules over the symmetric groups. As an application, we obtain invariants of framed ribbon links which are polynomials in the interpolating variable $t$. These polynomials interpolate untwisted Dijkgraaf--Witten invariants of the symmetric groups.
\end{abstract}
\maketitle


\section{Introduction}

Deligne introduced a symmetric monoidal category $\uRep(S_t)$ for any complex number $t$ in \cite{Del} which interpolates the representation categories of symmetric groups $\Rep(S_n)$ for $n\geq1$. Given any monoidal category $\cC$, there exists a general procedure to produce a braided monoidal category --- the \emph{monoidal center} or \emph{Drinfeld center} $\cZ(\cC)$. This center construction has applications to the algebraic construction of ribbon invariants and $3$-dimension topological quantum field theories (TQFTs). In this paper, we investigate the monoidal center of the category $\uRep(S_t)$ and obtain invariants of framed ribbons links from it. We give explicit constructions of ribbon subcategories $\cD_t$ of $\cZ(\uRep(S_t))$, depending on a parameter $t\in \Bbbk$, that interpolate the categories of crossed modules over symmetric groups which correspond to the monoidal centers of the respective representation categories. Throughout this paper, $\Bbbk$ is a field of characteristic zero.

\subsection{Crossed Modules and the Monoidal Center}
The concept of crossed modules over a group $G$ goes back to Whitehead \cite{Whi}*{Section~2} and \cite{EM}*{p.\,56} in the study of group extensions in the context of homotopy theory.\footnote{In \cite{Whi}, a more general and slightly different definition is given. Our definition is the linear span of a \emph{crossed $G$-set} as in \cite{FY}*{Definition 4.2.1}.}
A \emph{$G$-crossed module} is a $G$-module
$$V=\oplus_{g\in G}V_g\qquad  \text{ such that }\qquad  h\cdot V_g=V_{hgh^{-1}}
$$
for any $h,g\in G$.
It was shown in \cite{FY}*{Theorem 4.2.2} that $G$-crossed modules form a braided monoidal category.
For a Hopf algebra $H$, the more general concept of \emph{$H$-Yetter--Drinfeld modules} \cite{Yet}*{Definition 3.6} recovers the definition of $G$-crossed modules used in this paper for the group algebra $H=\Bbbk G$. For finite-dimensional $H$, the category of Yetter--Drinfeld modules is equivalent to that of modules over Drinfeld's quantum double $\Drin H$ of \cite{Dri}, see \cite{Maj1}*{Proposition 7.1.6} and \cite{Maj5}.

The category of modules over $\Drin H$ is equivalent to the \emph{monoidal center} $\cZ(\cC)$ of the category $\cC=\Rep(H)$ of left $H$-modules, see e.g. \cite{Kas}*{Section XIII.5}. However, the braided monoidal category $\cZ(\cC)$ can be defined for any monoidal category $\cC$ \cites{Maj2,JS}.

\subsection{Deligne's Interpolation Category}
In \cite{Del}, Deligne constructs a class of Karoubian (i.e., idempotent complete) $\Bbbk$-linear symmetric monoidal categories $\uRep(S_t)$ depending on a parameter $t$ in the field $\Bbbk$ of characteristic zero. It is a well-known observation that every simple $S_n$-module appears as a direct summand of a tensor power $V_n^{\otimes k}$ of the $n$-dimensional regular representation $V_n$ of $S_n$ for some $k\geq1$. In other words, the category $\Rep(S_n)$ of finite-dimensional $S_n$-modules is the \emph{idempotent completion} (or \emph{Karoubi envelope}, \emph{Cauchy completion}, \cite{BD}) of a monoidal category generated by the single object $V_n$. The morphism spaces $\Hom(V_n^{\otimes k},V_n^{\otimes l})$ are given by the $S_n$-invariants of $V_n^{\otimes (k+l)}$ and can be described combinatorially using the partition algebras $P_k(n)$, see e.g. \cite{CO}*{Section~2} for details. To define $\uRep(S_t)$, $n$ is now replaced by a general parameter $t\in \Bbbk$.

In the generic case, if $t\notin \mZ_{\geq 0}$, then $\uRep(S_t)$ is a semisimple monoidal category. If $n\in \mZ_{\geq 1}$, then there exists a quotient functor
$$\cF_n\colon \uRep(S_n)\longrightarrow \Rep(S_n).$$
This way, $\uRep(S_t)$ may be seen as an interpolation category for the classical categories $\Rep(S_n)$.

\subsection{Dijkgraaf--Witten Invariants and TQFT}

We recall that there is a deep connection between invariants of knots, invariants of closed $3$-manifolds via surgery presentation, and representations of quantum groups which is manifest in the construction of $3$-dimensional TQFTs (also, $2$-dimensional rational conformal field theories). These links were explored by Witten in \cite{Wit} in connection with the \emph{Jones polynomial} of a knot \cite{Jon} and the quantized enveloping algebra of $\fr{sl}_2$ \cite{Dri}. Another interesting class of such $3$-dimensional TQFTs $Z^{\mathrm{DW}}_{G,\omega}$ was considered by Dijkgraaf--Witten \cite{DW} dependent on the input of a finite group $G$ and a $3$-cycle $\omega\in H^3(G,\Bbbk^\times)$. In the \emph{untwisted} case where $\omega$ is the trivial cocycle, the TQFT associates to any closed connected $3$-manifold $C$ the number
\begin{equation}
Z^{\mathrm{DW}}_G(C)=\frac{1}{|G|}\left|\Hom_{\mathrm{group}}(\pi_1(C),G)\right|=\frac{1}{|G|}\left|\Hom_{\mathrm{group}}(K_\cL,G)\right|,
\end{equation}
where $\Hom_{\mathrm{group}}$ denotes the set of group homomorphisms. Here, $\cL$ is any knot whose knot complement is a surgery presentation of $C$ and $K_\cL=\pi_1(C)$ is the \emph{knot group}. For a general ribbon link $L$, the quantity
\begin{equation}
\op{Inv}^{\mathrm{DW}}_G(L):=\left|\Hom_{\mathrm{group}}(K_\cL,G)\right|
\end{equation}
is a ribbon invariant. In fact, $Z^{\mathrm{DW}}_{G,\omega}$ extends to the structure of a fully extended $3$-dimensional TQFT for a general cocycle $\omega$. This TQFT was originally obtained using Chern--Simons theory and path integrals but can be constructed using a quasi-Hopf algebra $\Drin^\omega G$ generalizing the Drinfeld double $\Drin G=\Drin^{1}G$ \cites{DPR,AC,FQ,Fre2}.

\subsection{Summary of Results}

For $n\geq 0$, let $\mu$ be a partition of $n$, and $e_\rho$ be an idempotent in $Z\otimes M_k(\Bbbk)$ obtained, for example, from a simple representation $\rho$ of $Z=Z(\sigma)$, the centralizer of an element $\sigma$ of cycle type $\mu$ in $S_n$. Associated to this datum, we construct an interpolation object $\uW{\sigma,\rho}$, see \Cref{intobject-def} and \Cref{uWrho}.

\begin{reptheorem}{halfbraiding-prop}
The object $\uW{\sigma,\rho}$ gives an object of the monoidal center of $\uRep(S_t)$.
\end{reptheorem}

Up to isomorphism, $\uW{\sigma,\rho}$ only depends on the cycle type $\mu$ of $\sigma$. Hence, we define $\cD_t$ to be the (additive and) idempotent completion of the monoidal subcategory of $\cZ(\uRep(S_t))$ generated by objects of the form $\uW{\sigma,\rho}$ for all $n,\sigma,\rho$, see \Cref{Dtribbonsect}. We prove the following main theorems.

\begin{reptheorem}{ribbon-thm}
The category $\cD_t$ is a ribbon category.
Given an interpolation object $\uW{\sigma,\rho}$ in $\cZ(\uRep(S_t))$, the twist is given by
\begin{align}
\theta_{\uW{\sigma,\rho}}=(\sigma^{-1})^{\oplus k} e_\rho.
\end{align}
\end{reptheorem}

\begin{reptheorem}{maintheorem1}
The restriction of the functor $\cQ\colon \cZ(\uRep(S_n))\to \cZ(\Rep(S_n))$ to $\cD_n$ is essentially surjective and full on morphism spaces.
\end{reptheorem}

\Cref{maintheorem1} states that $\cD_t$ interpolates the category $\cZ(\Rep(S_n))$ of $S_n$-crossed modules, it implies that $\cQ$ induces an equivalence of the semisimplification of $\cZ(\uRep(S_n))$ (or of $\cD_n$) and $\cZ(\Rep(S_n))$ (\Cref{cor-semisimplification}). It remains an open question whether the monoidal center $\cZ(\uRep(S_t))$ is equivalent to $\cD_t$, cf. \Cref{Dtquestions}. If $t$ is a non-negative integer, then $\cD_t$ is not semisimple. It remains to be explored whether $\cD_t$, as $\uRep(S_t)$, is semisimple if $t$ is generic.

\Cref{ribbon-thm} implies that every object in $\cD_t$ provides polynomial invariants of (framed) ribbon links and ribbon knots. For $\mu,\rho$, the invariant of a ribbon link $\cL$ is denoted by  $\rP_{\mu,\rho}(\cL,\mathbf{t})$, where $\mathbf{t}$ is a free variable. If the twists are trivial, which we show to be the case in certain interesting situations, then these are invariants of links and knots. These ribbon link polynomials interpolate the untwisted Dijkgraaf--Witten invariants.

\begin{repcorollary}
{DWinterpolation}
For any be a ribbon link,
evaluating  the ribbon link polynomials $\rP_{\mu,\rho}(\cL,\mathbf{t})$ for a partition $\mu$ of $n$ at ${\mathbf{t}}=n$ recovers the untwisted Dijkgraaf--Witten invariants associated to $S_n$.
\end{repcorollary}

We compute examples of the new ribbon link polynomials in \Cref{invariantexamples}. For instance, for the left-handed trefoil we find in the case of the trivial character $\rho=\triv$ that
\begin{gather*}
    \rP_{(2), \triv}\left(\torusknotinvsmall{2}{3}~, \mathbf{t}\right)=(2\mathbf{t}-3) \frac{\mathbf{t}(\mathbf{t}-1)}{2}, \qquad 
    \rP_{(3), \triv}\left(\torusknotinvsmall{2}{3}~, \mathbf{t}\right)=(3\mathbf{t}-8) \frac{\mathbf{t}(\mathbf{t}-1)(\mathbf{t}-2)}{3}, \\
    \rP_{(4), \triv}\left(\torusknotinvsmall{2}{3}~, \mathbf{t}\right)=(2  \mathbf{t}^2-16  \mathbf{t}+37)\frac{\mathbf{t}(\mathbf{t}-1)(\mathbf{t}-2)(\mathbf{t}-3)}{4},\\
    \rP_{(2,2), \triv}\left(\torusknotinvsmall{2}{3}~, \mathbf{t}\right)=(4 \mathbf{t}^2-28 \mathbf{t}+49)\frac{\mathbf{t}(\mathbf{t}-1)(\mathbf{t}-2)(\mathbf{t}-3)}{8}.
\end{gather*}

The paper is structured as follows.
After recalling the necessary preliminaries in \Cref{prelims}, the main constructions and results are contained in \Cref{delignecenter-section}.
In \Cref{invariantsect}, we consider the ribbon link invariants obtained from $\cD_t$. Here, after recalling general material on Dijkgraaf--Witten theory and invariants obtained from ribbon categories, we clarify the relationship of the ribbon link polynomials obtained here to untwisted Dijkgraaf--Witten invariants in \Cref{untwistedrel}. 
\Cref{idempotent-appendix} considers the Karoubian envelope of the monoidal center.

\subsection{Acknowledgements}
The authors are grateful to
Pavel Etingof, Kobi Kremnizer, and  Victor Ostrik  for interesting discussions on the subject matter. In particular, V. Ostrik gave an important idea to start the construction of the interpolation objects. We further thank Sebastian Posur and two anonymous referees for their helpful comments that helped improve the exposition of the paper. The research of R.~L. was supported by the Simons foundation and is supported by a Nottingham Research Fellowship. 

\section{Preliminaries}\label{prelims}

In this section, we briefly discuss the necessary background material needed in this paper.

\subsection{The Monoidal Center} We recall basics about the center of a monoidal category of \cites{Maj2, JS}, see \cite{EGNO}*{Section 7.13} for a textbook exposition.

Let $\Bbbk$ be a field and $(\cC,\otimes)$ be a $\Bbbk$-linear monoidal category (that is, enriched over $\Bbbk$-vector spaces and additive, such that finite biproducts exist). The \emph{monoidal center} (or \emph{Drinfeld center})  $\cZ(\cC)$ of $\cC$  can be described as the monoidal category consisting of objects $(V,c)$, $V\in \cC$, and $c\colon V\otimes \ide_{\cC}\isomorph \ide_{\cC}\otimes V$ a natural isomorphism satisfying the $\otimes$-compatibility that
\begin{equation}\label{tensorcomp}
c_{X\otimes Y}=(\ide_X\otimes c_{Y})(c_X\otimes \ide_Y),
\end{equation}
where associativity isomorphisms are omitted in the notation. We call such an isomorphism $c$ a \emph{half-braiding}. If $\cC$ is rigid, a half-braiding is, equivalently, a natural transformation $c\colon V\otimes \ide_{\cC}\to \ide_{\cC}\otimes V$ satisfying \Cref{tensorcomp} such that the half-braiding with the tensor unit $c_I$ corresponds to the identity on $V$. Note that there exists a faithful strict monoidal functor $\rF\colon \cZ(\cC)\to \cC$ which sends a pair $(V,c)$ to the object $V$.

The center $\cZ(\cC)$ is a braided monoidal $\Bbbk$-linear category, with braiding $\Psi$ given by
\begin{align}\label{centerbraiding}
\Psi_{(V,c),(W,d)}=c_W.
\end{align}

\subsection{Ribbon Categories}\label{ribbon-sect}

A \emph{pivotal category} is a monoidal category $\cC$ (with unit $I$) together with an assignment $X\mapsto X^*$ such that $X^*$ is both a left and right dual of $X$, i.e., such that the left and right duality functors coincide (see \cite{TV}*{Section 1.7}). We denote the natural isomorphisms of left and right duality by
\begin{gather}
\ev^l_X \colon X^*\otimes X\to I, \qquad \coev^l_X\colon I\to X\otimes X^*, \\
\ev^r_X \colon X\otimes X^*\to I, \qquad \coev^r_X\colon I\to X^*\otimes X.
\end{gather}
A \emph{strict pivotal functor} is a strict monoidal functor which strictly preserves $X^*$, $\ev^l_X$, and $\ev^r_X$.

The monoidal center $\cZ(\cC)$ is pivotal provided that $\cC$ is pivotal \cite{TV}*{Section 5.2.2} with the following structure: The dual $(V^*,c^*)$ of an object $(V,c)$ in $\cZ(\cR_t)$ is defined on the dual $V^*$ of $V$ in $\cC$, where the dual half-braiding $c^*_X\colon V^*\otimes X\to X\otimes V^*$ is given by
\begin{align}\label{centerdual}
c^*_X = (\ev^l_V\otimes \ide_{X\otimes V^*})(\ide_{V^*}\otimes c_X^{-1}\otimes \ide_{V^*})(\ide_{V^*\otimes X}\otimes \coev^l_V),
\end{align}
for any object $X$ of $\cC$.

In a pivotal category, we can define left and right \emph{traces}, and a pivotal category is called \emph{spherical} if they coincide, that is,
\begin{align}
    \ev^l_X(\ide_{X^*}\otimes f)\coev^r_X
    = \ev^r_X(f\otimes \ide_{X^*})\coev^l_X
\end{align}
for any endomorphism $f$ of $X$ in $\cC$. The resulting endomorphism of $I$ is called $\tr(f)$, the \emph{trace} of $f$, and can be regarded as an element in $\Bbbk$. As a special case, we obtain the \emph{dimension} $\dim(X):=\tr(\ide_X)$ for any $X$ in $\cC$.

The categories considered in this article will be spherical. For instance, $\cZ(\cC)$ is a spherical category if $\cC$ is a spherical category, and the forgetful functor $\rF\colon \cZ(\cC)\to \cC$ is strictly pivotal \cite{TV}*{Section 5.2.2}.

We recall the concept of a \emph{ribbon category}, see e.g. \cite{TV}*{Section 3.3.2}. Given an object $X$ in a braided pivotal category $\cC$, we can define two endomorphisms of $X$, the \emph{left twist} $\theta^l_X$, and the \emph{right twist} $\theta^r_X$, by
\begin{gather*}
\theta^l_X=(\ev^l_X\otimes \ide_X)(\ide_{X^*}\otimes \Psi_{X,X})(\coev^r_X\otimes \ide_X),\\
\theta^r_X=(\ide_X\otimes \coev^r_X)(\Psi_{X,X}\otimes \ide_{X^*})(\ide_X\otimes \coev^l_X).
\end{gather*}
The category $\cC$ is a ribbon category if the left twist $\theta^l_X$ and the right twists $\theta^r_X$ are equal for every object $X$ of $\cC$. In this case, we write $\theta_X:=\theta_X^l$. Every ribbon category is spherical (see \cite{TV}*{Corollary 3.4}).

\subsection{The Monoidal Center of \texorpdfstring{$\Rep(S_n)$}{Rep(Sn)}}\label{Sncenter}

In this section, we recall basic facts about the monoidal center of the  category $\Rep(S_n)$ of finite-dimensional $S_n$-modules over the field $\Bbbk$. It is well known that there are equivalences\footnote{The equivalences hold for any finite-dimensional Hopf algebra $H$ instead of $\Bbbk S_n$, see e.g. \cites{Kas,Maj1}.} of braided monoidal categories
\begin{align}
\cZ(\Rep(S_n))\simeq \lYD{\Bbbk S_n}\simeq\Rep (\Drin S_n).
\end{align}
see e.g. \cites{Kas,Maj1}, where the second and third category will be explained in the following.

The category $\lYD{\Bbbk S_n}$ is that of finite-dimensional \emph{Yetter--Drinfeld modules} over $\Bbbk S_n$, also called \emph{$S_n$-crossed modules}. An $S_n$-crossed module is a $S_n$-graded $S_n$-module
$$V=\bigoplus_{g\in S_n} V_g,
\qquad \text{such that }\qquad hV_g=V_{hgh^{-1}}
\qquad \text{for all}\qquad g,h\in S_n\ .
$$
A morphism of $S_n$-crossed modules is a morphism of $S_n$-modules which preserves the grading.

The category $\Rep(\Drin S_n)$ consists of all finite-dimensional modules over the \emph{Drinfeld double} (or \emph{quantum double}) $\Drin S_n$ of the group algebra $\Bbbk S_n$. We note that the quasi-triangular Hopf algebra $\Drin S_n$ has dimension $|G|^2$.  Note that as $\Bbbk$ is a field of characteristic zero, $\cZ(\Rep(S_n))$ is a semisimple category \cite{DPR}*{Section 2.2}.

An $S_n$-crossed module $V$ defines an object $(V,c)$ in $\cZ(\Rep(S_n))$, with the half-braiding given by
\begin{align}
&c_W\colon V\otimes W\longrightarrow W\otimes V, &c_W(v\otimes w)=(g\cdot  w)\otimes v,&&\text{for } v\in V_g, w\in W,
\end{align}
where now $W$ can be any $S_n$-module.

\begin{definition}
Let $\mu\vdash n$ be a partition of $n$.
We define the $S_n$-crossed module $V^{\mu}$ as
\begin{align*}
V^{\mu}
= \mu^{S_n}\otimes \Bbbk S_n
= \bigoplus \Set{ \Bbbk g\otimes h \mid g,h\in S_n,\text{ such that the cycle type of $g$ is $\mu$}}
\ ,
\end{align*}
where $\mu^{S_n}$ denotes the $S_n$-permutation representation on the subspace of the group algebra spanned by elements of cycle type $\mu$. The $S_n$-action and an $S_n$-grading  on $V^\mu$ are given by
\begin{align}
\sigma\cdot (g\otimes h)&=\sigma g \sigma^{-1}\otimes \sigma h, & g\otimes h \in (V^\mu)_g, &&\text{for } g\otimes h\in V^{\mu}, \sigma \in S_n,
\end{align}
That is, $V^\mu$ is the tensor product of the permutation module $\mu^{S_n}$ with the regular module $\Bbbk S_n^{\reg}$.
\end{definition}

\begin{lemma}\label{regulardec-lemma}
The regular $\Drin S_n$-module $V^{\reg}=\Drin S_n$ decomposes as a direct sum
\begin{align*}
V^{\reg}\cong \bigoplus_{\mu\vdash n}V^{\mu}
\end{align*}
of $\Drin S_n$-modules. Here, $\mu$ ranges over all partitions of $n$.
\end{lemma}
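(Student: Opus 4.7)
The plan is to describe $V^{\reg} = \Drin S_n$ explicitly as an $S_n$-crossed module via the equivalence $\Rep(\Drin S_n) \simeq \lYD{\Bbbk S_n}$ recalled above, and then to decompose it by cycle type. As a vector space, $\Drin S_n = \Bbbk^{S_n} \otimes \Bbbk S_n$ has basis $\{ \delta_g \otimes h \mid g, h \in S_n \}$. Unwinding the standard formula for the Drinfeld double multiplication shows that the $S_n$-grading inherited from $\Bbbk^{S_n}$ places $\delta_g \otimes h$ in degree $g$, while the $S_n$-action inherited from $\Bbbk S_n$ is
\begin{align*}
\sigma \cdot (\delta_g \otimes h) = \delta_{\sigma g \sigma^{-1}} \otimes \sigma h,
\end{align*}
which plainly satisfies the Yetter--Drinfeld compatibility.

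Writing $C_\mu \subseteq S_n$ for the conjugacy class of permutations of cycle type $\mu \vdash n$, I would then set
\begin{align*}
U^\mu := \bigoplus_{g\in C_\mu} \delta_g \otimes \Bbbk S_n.
\end{align*}
This subspace is $S_n$-graded by construction and is closed under the $S_n$-action above since conjugation preserves cycle type; hence it is a sub-Yetter--Drinfeld module. Since $S_n$ is the disjoint union of its conjugacy classes $C_\mu$ as $\mu$ ranges over partitions of $n$, this yields an internal direct sum decomposition $V^{\reg} = \bigoplus_{\mu \vdash n} U^\mu$ in $\lYD{\Bbbk S_n}$.

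To finish, I would exhibit the $\Bbbk$-linear isomorphism $U^\mu \to V^\mu$ defined on basis elements by $\delta_g \otimes h \mapsto g \otimes h$. This is manifestly a bijection and preserves the $S_n$-grading, and a direct comparison of the two action formulas shows that it intertwines the $S_n$-actions as well. There is no real obstacle here: the argument is essentially just unwinding the dictionary between $\Rep(\Drin S_n)$ and $\lYD{\Bbbk S_n}$, combined with the observation that cycle type is a complete invariant of $S_n$-conjugacy.
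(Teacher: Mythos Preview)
Your proof is correct. The paper states this lemma without proof, treating it as an immediate observation once $V^\mu$ has been defined; your argument supplies precisely the routine verification that the authors omit, namely unwinding the crossed-module structure on the regular $\Drin S_n$-module and matching it against the definition of $V^\mu$ conjugacy class by conjugacy class.
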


In this paper, we will need to evaluate the half-braiding of $V^{\mu}$ on the $\Bbbk S_n$-module
$$
V_n = \Bbbk v_1 \oplus\dots\oplus \Bbbk v_n
\ ,
$$
which is the $n$-dimensional \emph{standard module}, with $\sigma\cdot v_i=v_{\sigma(i)}$ for $\sigma\in S_n$. Clearly, the half-braiding $c_{V_n}$ evaluated on the given basis is
\begin{align}\label{ch}
c_{V_n}((g\otimes h)\otimes v_i)=v_{g(i)}\otimes (g\otimes h), &&\text{for } g\otimes h\in V^\mu, v_i\in V_n.
\end{align}

We finish this section by recalling an explicit description of the \emph{simple} $S_n$-crossed modules. To this end, let $\mu=(\mu_1,\dots,\mu_k)$ be a partition of $n$ and let us denote the multiplicity of any part $1\leq i\leq n$ in $\mu$ by $n_i\geq 0$, that is, the partition $\mu$ can be described as
$$n =\mu_1+\dots+\mu_k= \sum_{1\leq i\leq n} n_i i \ .$$
Let $\sigma\in S_n$ be an element of cycle type $\mu$, that is, it can be written as a product of disjoint cycles in $S_n$ of lengths $\mu_1,\dots,\mu_k$. The centralizer $Z(\sigma)$ of $\sigma$ in $S_n$ is then isomorphic to the wreath product $\prod_i \mZ_i \wr S_{n_i}\cong \prod_{i: n_i>0} \mZ_i \wr S_{n_i}$.

For a $k\geq 1$, Let $V=\Bbbk^k$ be a simple $Z(\sigma)$-module with action $\rho\colon Z(\sigma)\to M_k(\Bbbk)$. Then we can consider the induced module $W=\Bbbk S_n \otimes_{\Bbbk Z(\sigma) } V$. More explicitly, let us choose representatives $(h_i)_i$ for the left cosets $S_n/Z(\sigma)$, i.e., $(h_i)_i$ are elements from $S_n$, one in each left coset of $Z(\sigma)$ in $S_n$. Then $W=\oplus_i \Bbbk h_i\otimes V$ as a vector space, and $W$ obtains a $S_n$-grading by assigning the degree $h_i\sigma h_i\inv$ to the subspace $h_i\otimes V$ such that the non-zero graded components are all isomorphic to $V$ as vector spaces and correspond exactly to the conjugacy classes of $\sigma$.

For each $h_i$ and each $h\in S_n$, we can write $hh_i=h_jz$ for some $h_j$ and some $z\in Z(\sigma)$, thus we can define an action of $h$ on $h_i\otimes V$ by setting
\begin{align}\label{inducedaction} 
h\cdot (h_i\otimes v) &= h_j \otimes \rho(z)v, &&\text{for } v\in V.
\end{align}
It can be verified that this defines an action of $S_n$ on $W$ and the degree of $h\cdot(h_i\otimes v)$ is $h_j\sigma h_j\inv=h (h_i \sigma h_i\inv) h\inv$, so we have constructed an $S_n$-crossed module.

It turns out that up to isomorphism the centralizer group $Z(\sigma)$ and the $S_n$-crossed module $W$ only depend on the cycle type $\mu$ of $\sigma$ (or equivalently, the conjugacy class of $\sigma$). We can hence denote the group and the $S_n$-crossed module constructed by $Z(\mu)$ and $W_{\mu,\rho}$, respectively.

\begin{theorem}[\cite{DPR}]\label{Sncenter-thm} For every partition $\mu$ of $n$ and every simple representation $(V,\rho)$ of $Z(\mu)$ , $W_{\mu,\rho}$ is a simple $S_n$-crossed module. Every irreducible $S_n$-crossed module is isomorphic to such a module for suitable $\mu,\rho$. Two such modules are isomorphic if and only if they are constructed using the same partition and isomorphic modules of the respective centralizer groups.
\end{theorem}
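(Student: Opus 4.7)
The plan is to establish an equivalence of categories between $S_n$-crossed modules supported on a single conjugacy class and modules over the centralizer group, and then invoke the fact that every $S_n$-crossed module canonically decomposes according to the support of its grading. First, observe that if $V = \bigoplus_{g\in S_n}V_g$ is an $S_n$-crossed module and $\mathcal{O}\subseteq S_n$ is a conjugacy class, then $V_\mathcal{O}:=\bigoplus_{g\in\mathcal{O}}V_g$ is an $S_n$-subcrossed module, since conjugation by any element of $S_n$ preserves $\mathcal{O}$. Thus $V=\bigoplus_{\mathcal{O}}V_\mathcal{O}$ and simple objects are supported on a single conjugacy class. This reduces the problem to classifying simple $S_n$-crossed modules whose grading is supported on a fixed class $\mathcal{O}=\mathcal{O}_\mu$ with representative $\sigma$ of cycle type $\mu$.

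Next, I would define mutually inverse functors between the full subcategory $\mathcal{C}_\mu$ of $S_n$-crossed modules supported on $\mathcal{O}_\mu$ and the category $\Rep(Z(\mu))$. In one direction, restriction to the $\sigma$-graded component $V\mapsto V_\sigma$ defines a functor: since for $h\in Z(\sigma)$ the identity $hV_\sigma=V_{h\sigma h^{-1}}=V_\sigma$ holds, $V_\sigma$ inherits a $Z(\sigma)$-action. In the opposite direction, the induction construction $(V,\rho)\mapsto W_{\mu,\rho}=\Bbbk S_n\otimes_{\Bbbk Z(\sigma)}V$, endowed with the $S_n$-grading and action described in the paragraph preceding \Cref{inducedaction}, defines the inverse functor. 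That these two constructions are mutually quasi-inverse is a direct verification: picking coset representatives $(h_i)_i$ of $S_n/Z(\sigma)$, one checks that the $(h_i\sigma h_i^{-1})$-graded component of $W_{\mu,\rho}$ is exactly $h_i\otimes V$, which under restriction to $h_1=e$ recovers $V$ as a $Z(\sigma)$-module; conversely any $V\in\mathcal{C}_\mu$ is recovered from its $\sigma$-graded piece by applying the induced action formula (\Cref{inducedaction}) to reconstruct each $V_{h_i\sigma h_i^{-1}}=h_i V_\sigma$.

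Given this equivalence of categories, the first and third assertions of the theorem follow formally: $W_{\mu,\rho}$ is simple in $\mathcal{C}_\mu$ (equivalently, in the whole category of $S_n$-crossed modules) if and only if $(V,\rho)$ is simple in $\Rep(Z(\sigma))$, and $W_{\mu,\rho}\cong W_{\mu',\rho'}$ forces first the equality of supports $\mathcal{O}_\mu=\mathcal{O}_{\mu'}$ hence $\mu=\mu'$, and second the isomorphism $\rho\cong\rho'$ of $Z(\mu)$-modules. Finally, since every simple object of $\cZ(\Rep(S_n))$ lies in some $\mathcal{C}_\mu$ by the decomposition above, and the equivalence sends simples to simples, every simple $S_n$-crossed module arises from some simple $Z(\mu)$-representation. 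Independence of the choice of representative $\sigma$ within its conjugacy class (and hence the well-definedness of $Z(\mu)$ and $W_{\mu,\rho}$ up to isomorphism) is a routine consequence of conjugating by an intertwining element.

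The main technical point to get right is the compatibility check for the induction functor: one must verify that the formula $h\cdot(h_i\otimes v)=h_j\otimes \rho(z)v$ (where $hh_i=h_jz$) truly defines a left $S_n$-action and that this action is compatible with the grading in the crossed-module sense $h\cdot (W_{\mu,\rho})_g\subseteq(W_{\mu,\rho})_{hgh^{-1}}$. This is essentially bookkeeping about cosets of $Z(\sigma)$ in $S_n$, but it is the step where the crossed-module axiom enters nontrivially; everything else in the argument is formal once this is in place.
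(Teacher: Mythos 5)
Your argument is correct and is essentially the standard one: the paper itself gives no proof of Theorem \ref{Sncenter-thm}, citing \cite{DPR}, and your decomposition by conjugacy-class support followed by the equivalence between crossed modules supported on $\mathcal{O}_\mu$ and $\Rep(Z(\mu))$ (restriction to the $\sigma$-component versus induction, exactly the construction described before the theorem) is the usual route to this classification. Nothing essential is missing; the coset bookkeeping you flag at the end is indeed the only nontrivial verification.
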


\subsection{Deligne's Category \texorpdfstring{$\uRep(S_t)$}{Rep(St)}}\label{deligne-sect}

We now fix some notations and introduce the interpolation category $\uRep(S_t)$ for $t\in \Bbbk$. We refer the reader to \cites{Del,CO} for more details. Recall that $\Bbbk$ is a field of characteristic zero and let $P_{n,m}$ denote the set of all partitions of the set $\Set{1,\ldots,n}\cup \Set{1',\ldots,m'}$ with $n+m$ elements, for all $n,m\geq 0$. Given a partition $\lambda$, we denote by $|\lambda|$ the number of its connected components. We utilize pictures to denote such partitions, for example,
\[
\lambda=
\tp{3,1,11,0,2,0,12,13,0,4}\quad \in \quad P_{4,3}
\]
denotes the partition $\lambda=\Set{\Set{1,3,1'},\Set{2},\Set{4},\Set{2',3'}}$. The composition of such morphisms is given by concatenation, reading from the top to bottom. All vertices but those in the first or last row are removed from each connected component of the resulting graph and a scalar factor $t$ is introduced for each component which is removed completely in the process. For example,
\begin{align*}
\left(
\tp{1,11, 12,0, 2,0,3}\right)
\circ
\left(\tp{3,1,11,0,2,0,12,13,0,4}\right)
=
\tp{3,1,11,21,22,0,2,0,12,13,0,4}
=
t\cdot \left(\tp{3,1,11,12,0,2,0,4}\right)\quad \in \quad P_{4,2}.
\end{align*}

Define $\cR^0_t=\uRep_0(S_t)$ as the category with
\begin{itemize}
\item objects $[n]$, for $n\in \mN_0$;
\item morphisms $\Hom([n],[m])=\Bbbk P_{n,m}$, the vector space with a basis indexed by the partitions in $P_{n,m}$;
\item composition defined on basis elements as above.
\end{itemize}
Note that for any $n\in\mN_0$, the identity endomorphism on $n$
corresponds to the partition $\{\{1,1'\},\dots,\{n,n'\}\}$. We denote it by $1_n$.

We define $\cR^1_t=\uRep_1(S_t)$ to be the additive closure of $\cR^0_t$. We use the following model for $\cR^1_t$. Objects are thought of as order lists, denoted as formal direct sums of objects in $\cR^0_t$. That is, a typical object is $[n_1]\oplus\ldots\oplus [n_k]$. For convenience of notation later, we define a vector space tensor action by
$$[n]\otimes \Bbbk^l=\underbrace{[n]\oplus \ldots \oplus [n]}_{\text{$l$ summands}}=[n]^{\oplus l},$$
for $l\geq 0$, which is regarded as an ordered sum, where we use the convention that $[n]\otimes \Bbbk^0=[n]^{\oplus 0}=[0]$, the tensor unit of $\cR^0_t$. Given a matrix $A\in M_{k\times l}(\Bbbk)$ and $f\colon [n]\to [m]$ a morphism in $\cR^0_t$, we obtain a morphism
\begin{align}f\otimes A\colon [n]^{\oplus k}\longrightarrow [n]^{\oplus l},\label{matrix-notation}\end{align}
where the $(i,j)$-th entry is the morphism $A_{ij}f$ in $\cR^0_t$.

Next, we denote the idempotent completion (also called \emph{Karoubian envelope}) of $\cR^1_t=\uRep_1(S_t)$ by $\cR_t=\uRep(S_t)$ (cf. Appendix \ref{idempotent-appendix}).
The category $\cR_t$ is a monoidal category, cf. \cite{CO}*{Proposition 2.2}. It is symmetric monoidal using the symmetry $\Psi$ which crosses strands \cite{CO}*{p.~1340}. Duals in $\cR_t$ are obtained from duals in $\cR^0_t$. That is, $[n]$ is self-dual (i.e. $[n]^*=[n]$) with the pairing $\ev\colon [n]^*\otimes [n]\to [0]$ described explicitly in \cite{CO}*{Section 2.2}. The object $[n]\otimes \Bbbk^l$ is self-dual with pairing given by the pairing of $[n]$ tensored with the standard pairing of $\Bbbk^l$, defined on the standard basis by $\ev(e_i,e_j)=\delta_{i,j}$. A morphism $f\otimes A$ is dualized to $f^*\otimes A^t$, where $f^*$ is the dual morphism in $\cR^0_t$. Given an idempotent $e$ in $\cR_t$, the dual object $([n],e)^*$ is $([n]^*, e^*)$, where $e^*$ is the dual morphism to $e$.

\begin{definition}[The partial order $>$, the elements $x_\lambda$]\label{ordering}
Recall that there is a partial order on partitions $P_{n,m}$. For $\lambda,\mu\in P_{n,m}$ we write $\mu\geq\lambda$ if $\mu$  is \emph{coarser} than $\lambda$, i.e. if two indices are in the same part of the partition $\lambda$, then they are in the same part in $\mu$. We write $\mu>\lambda$ if $\mu\geq\lambda$ and $\mu\neq\lambda$.

Recall that for any $\lambda\in P_{n,m}$, we have the recursively defined \cite{CO}*{Equation (2.1)} elements
\begin{align}
x_\lambda=\lambda-\sum_{\mu>\lambda}x_{\mu}.
\end{align}
\end{definition}

We observe that $\cR_t$ (as well as $\cR^0_t, \cR^1_t$) are ribbon categories, and hence spherical and pivotal (see Section \ref{ribbon-sect} for these concepts). Regarding the pivotal structure we note that for any idempotent $e\colon [n]\to [n]$, $([n],e^*)$ is left dual to $([n],e)$ as explained above, and right dual by use of the symmetry. Hence $([n],e)^{**}=([n],e)$ and we can use identity morphisms to give a twist structure making $\cR_t$ a ribbon category (see e.g. \cite{TV}*{Section 3.3.3}). In this category we have
\begin{align}
\ev_X:=\ev_X^l&=\ev_X^r, &\coev_X:=\coev_X^l&=\coev_X^r,
\end{align}
for any object $X=([n],e)$.
Thus, the \emph{trace} of an endomorphism $f$ of $X$ in $\cR_t$ is simply
$$\tr(f)=\ev_X\Psi_{X,X^*} (f\otimes \ide_{X^*})\coev_{X},$$
cf. \cite{CO}*{Section 2.3}. For instance, the trace of an endomorphism $f$ of the object $[4]$ corresponds to the following diagram:
\[ \tikz[looseness=1]{
\node[align=center, text width=20pt, draw=black](f){$f$};
\node[align=center, text width=20pt,right=of f](id){\phantom{$f$}};
\foreach \a in {60,80,100,120}
\draw (f.\a) to[out=90,in=90] (id.{180-\a}) -- (id.{\a-180}) to[out=-90,in=-90] (f.-\a);
} 
\]
We note that $\dim([n])=\tr(1_n)=t^n$.

Further, recall that $\cR_t$ is semisimple for $t$ \emph{generic}, i.e., for $t$ \emph{not} an integer larger or equal to zero \cite{Del}*{Th\'eor\`eme 2.18}.

We collect here some notation used in the paper concerning $\cR_t$. The identity on the object $[n]$ is denoted by $\ide_{[n]}=1_n$. We read compositions of morphisms in $\cR_t$ from right to left. We often simply write $f\circ g=fg$. We use matrix notation, as made precise above, to denote morphisms in $\cR^1_t$ or $\cR_t$. In particular, $f^{\oplus s}$ denotes the diagonal $s\times s$-matrix with $f$ on the diagonal.

\subsection{The Interpolation Functor \texorpdfstring{$\cF_n$}{Fn}}\label{functor-sect}

In this section, we restrict to the case where $t=n$ is a non-negative integer. Deligne introduced the \emph{interpolation functor}
$$\cF_n \colon \uRep(S_n)\longrightarrow\Rep(S_n),$$
see \cite{Del}*{\S 6}, \cite{CO}*{Section 3.4} where, by convention, $S_0$ is the trivial group. The monoidal functor $\cF_n$ is essentially surjective on objects, and full on morphism spaces. 
An explicit description of this functor is given as follows. 

Recall that $V_n$ is the $n$-dimensional standard module of $S_n$. For any $d\geq 0$, $\cF_n([d])=V_n^{\otimes d}$ (where $V_0^{\otimes 0}=\Bbbk$), and $\cF$ is compatible with direct sums.

Let us pick a basis $v_1,\dots,v_n$ of $V_n$ and for any $d\geq 1$, let us denote the tensor product $v_{i_1}\otimes\dots\otimes v_{i_d}$ in $V_n^{\otimes d}$ by $v_{\mathbf{i}}$ for any $d$-tuple $\textbf{i}=(i_1,\dots,i_d)$ in $\Set{1,\dots,n}^d$. Then there is a linear map
\begin{align*}
f\colon \Hom_{\cR_t}([d],[e])\longrightarrow \Hom_{S_n}(V_n^{\otimes d},V_n^{\otimes e})
\end{align*}
sending any partition $\pi\in P_{d,e}$ to the function defined by
\begin{align}\label{piimage}
f(\pi)(v_{\mathbf{i}})&=\sum_{\mathbf{j}}f(\pi)_{\mathbf{j}}^{\mathbf{i}}v_{\mathbf{j}},
\end{align}
for all $d$-tuples $\mathbf{i}$ and all $e$-tuples $\mathbf{j}$, where the coefficient $f(\pi)_{\mathbf{j}}^{\mathbf{i}}$ equals $1$ if the partition $\pi$ of the set $\{1,...,d,1',...,e'\}$ equals or refines the partition which is given by the equality of the corresponding indices $\mathbf{i},\mathbf{j}$, and $f(\pi)_{\mathbf{j}}^{\mathbf{i}}=0$ otherwise (see e.g. \cite{CO}*{Section 2.1} for more details).

We note that for a morphism $x_\pi$ as in Definition \ref{ordering}, $f$ can be computed more directly,
\begin{align}\label{ximage}
f(x_\pi)(v_{\mathbf{i}})&=\sum_{\mathbf{j}}f(x_\pi)_{\mathbf{j}}^{\mathbf{i}}v_{\mathbf{j}},
\end{align}
where the coefficient $f(x_\pi)_{\mathbf{j}}^{\mathbf{i}}$ equals $1$ if  the partition $\pi$ of the set $\{1,...,d,1',...,e'\}$ equals the partition which is given by the equality of corresponding indices $\mathbf{i},\mathbf{j}$. Otherwise, $f(x_\pi)_{\mathbf{j}}^{\mathbf{i}}=0$.

\begin{remark}\label{rem-f-x-g-n} In particular, for $\pi=1_n\in P_{n,n}$, $f(x_{1_n})$ is the endomorphism of $V_n^{\otimes n}$ which sends $v_{\mathbf{i}}$ to $0$ if two indices of the tuple $\mathbf{i}$ coincide, while it is the identity map on $v_{\mathbf{i}}$ if $\mathbf{i}$ has $n$ distinct indices. We can hence identify the image of $f(x_{1_n})$ with $\Bbbk S_n$ as an $S_n$-module, where we identify $v_{\mathbf{i}}$ with the permutation $k\mapsto i_k$ for $1\leq k\leq n$ and where $g\cdot v_{\mathbf{i}} = v_{g(i_1),\dots,g(i_n)}$ for $g\in S_n$.

Similarly, for any permutation $g\in S_n$, $f(x_g)$ sends $v_\mathbf{i}$ to $0$ if two indices in $\mathbf{i}$ coincide, and sends $v_{(i_1,\dots,i_n)}$ to $v_{(i_{g\inv(1)},\dots,i_{g\inv(n)})}$ for distinct indices $i_1,\dots,i_n$. That is, $f(x_g)$ corresponds to the right multiplication by $g\inv$ on $\Bbbk S_n$, reading the composition of permutations from right to left.
\end{remark}

It turns out that $f$ is functorial with respect to the composition, and hence can be used to define the functor $\cF_n$, where the assignment on objects sends $([d],e)$ to the image of the idempotent $f(e)$.
Given an $S_n$-module $V$, we may call an object $([n],e)$ in $\cR_t$ such that $\cF_n([n],e)\cong V$ a \emph{lift} of $V$.
The functor $\cF_n$ has an interpretation in terms of semisimplification of $\cR_t$, see \Cref{functorcenter}.

\subsection{Generating Morphisms for \texorpdfstring{$\uRep(S_t)$}{Rep(St)}}

The monoidal category $\cR^0_t=\uRep_0(S_t)$ is generated by the object $[1]$ under tensor products, and hence $\cR^1_t$ is generated by $[1]$ under direct tensor products and direct sums. In this section, we determine a set of generating morphisms for these categories. We denote by $\Hom_0,\Hom_1$, $\Hom$ the set of morphisms in $\cR^0_t,\cR^1_t,\cR_t$, respectively

\begin{definition}[The set $M$] We consider the following morphisms in $\cR^0_t$:
\begin{gather*}
\pi^* = \tpart{1,0}{}
  \in\Hom_0([1],[0])\ ,
\quad
\pi_* = \tpart{0,1}{}
  \in\Hom_0([0],[1])\ ,
\quad
\pi_H = \tpart{2,2}{\tpid{1,2} \draw (1,1.5)--(2,1.5);}
  \ ,
\quad
\pi_X = \tpart{2,2}{\draw (1,1)--(2,2) (1,2)--(2,1);}
  \in\Hom_0([2],[2])\ .
\end{gather*}
Now let $M$ be the set $\{\pi_*,\pi^*,\pi_H,\pi_X\}$.
\end{definition}

\begin{proposition}\label{generatorprop} The set $M$ generates $\Hom_0$ under composition, tensor product, and $\Bbbk$-linear combination of morphisms. 
\end{proposition}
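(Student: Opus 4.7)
The plan is to exhibit, for every basis element $\lambda\in P_{n,m}$ of $\Hom_0([n],[m])$, an explicit realization as a composition and tensor product of morphisms in $M$ together with identity morphisms. Since $P_{n,m}$ is a $\Bbbk$-basis of $\Hom_0([n],[m])$, this will suffice. The first step would be to assemble a few convenient building blocks from $M$ by direct computation:
\begin{align*}
(\pi^*\otimes \pi^*)\circ \pi_H &\in\Hom_0([2],[0]) \quad\text{equals the cap}\;\{\{1,2\}\},\\
\pi_H\circ(\pi_*\otimes \pi_*) &\in\Hom_0([0],[2]) \quad\text{equals the cup}\;\{\{1',2'\}\},\\
(1_1\otimes \pi^*)\circ \pi_H &\in\Hom_0([2],[1]) \quad\text{equals the merge}\;\{\{1,2,1'\}\},\\
\pi_H\circ (1_1\otimes \pi_*) &\in\Hom_0([1],[2]) \quad\text{equals the split}\;\{\{1,1',2'\}\}.
\end{align*}
In each case one verifies that the composition introduces no factor of $t$, since every intermediate vertex remains connected to the surviving block throughout the composition.

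Next, I would use $\pi_X$ tensored with identities to obtain every adjacent transposition $1_{i-1}\otimes \pi_X\otimes 1_{n-i-1}$ on $[n]$. These generate $S_n$, so the partition diagram of any permutation on $[n]$ lies in the subcategory generated by $M$. For a general $\lambda\in P_{n,m}$ with blocks $B_1,\ldots,B_\ell$, I would apply appropriate permutations on the top and bottom vertices so that the top and bottom vertices of each $B_i$ are contiguous. The rearranged partition then decomposes as a tensor product $\bigotimes_{i=1}^\ell \beta_i$ of single-block partitions $\beta_i\in\Hom_0([n_i],[m_i])$, where $n_i,m_i$ count the top and bottom vertices of $B_i$. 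When $n_i,m_i\geq 1$, the morphism $\beta_i$ is obtained by iterating the merge morphism $n_i-1$ times to combine the top strands, then iterating the split morphism $m_i-1$ times to fan out to the bottom strands; the degenerate cases $n_i=0$ and $m_i=0$ are handled by prepending $\pi_*$ or appending $\pi^*$ respectively, and singleton blocks by $\pi^*$ or $\pi_*$ alone.

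The main obstacle will be the combinatorial bookkeeping needed to certify that each iterated composition yields exactly the claimed partition diagram without introducing spurious factors of $t$. I expect this is resolved by carefully choosing the order of operations (e.g.\ merging adjacent strands one at a time so that the growing block stays connected to each subsequent intermediate row), after which the verification reduces to checking vertex connectivity once intermediate rows are erased, which is routine.
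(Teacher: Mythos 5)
Your proposal is correct and follows essentially the same route as the paper's proof: generate all permutation diagrams from $\pi_X$, build single-block partitions from $\pi_H$ together with $\pi_*,\pi^*$ (the paper uses the recursion $\pi_{t+1,t+1}=(1_{t-1}\otimes\pi_H)(\pi_{t,t}\otimes 1)$ where you use iterated merges and splits), and reduce an arbitrary partition to a tensor product of single-block pieces by conjugating with permutations. The only difference is organizational --- the paper peels off one connected component at a time by induction on $(m,n)$, while you make all blocks contiguous at once --- and your connectivity argument ruling out spurious factors of $t$ is the same routine check the paper performs implicitly.
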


\begin{proof} First, we fix $n\geq 1$. Now any permutation $\sigma\in S_n$ defines a morphism $\pi_\sigma$ in $\Hom_0([n],[n])$, namely the partition $\{\{1,\sigma(1)'\},\dots,\{n,\sigma(n)'\}\}$. To show that all these morphisms are generated by $M$, it is enough to consider the case where $\sigma$ is the identity permutation or a transposition, because $S_n$ is generated by transpositions, and the composition of $\pi_{\sigma_1}$ and $\pi_{\sigma_2}$ in the category $\cR^0_t$ yields $\pi_{\sigma_1\sigma_2}$ for permutations $\sigma_1,\sigma_2\in S_n$. But
\[ \pi_{\sigma=\ide} = 1_n
\textnormal{ and }
\pi_{\sigma=(k\,k+1)} = 1_{k-1}\otimes \pi_X\otimes 1_{n-k-1}
\]
for any $1\leq k<n$, so indeed, $M$ generates all morphisms in $\cR^0_t$ which come from permutations.

Second, we fix $t_1,t_2\geq 0$ and we let $\pi_{t_1,t_2}$ be the partition $\{\{1,\dots,t_1,1',\dots,t_2'\}\}$. Now
\[
\pi_{0,0} = 1_0 \ ,\quad
\pi_{1,1} = 1 \ ,\quad
\pi_{t+1,t+1} = (1_{t-1}\otimes \pi_H) (\pi_{t,t}\otimes1)
\]
for all $t>1$. Hence $\pi_{t,t}$ is generated by $M$ for all $t\geq0$.

Now if $t_1>t_2$, then $\pi_{t_1,t_2}=(1_{t_2}\otimes (\pi^*)^{\otimes(t_1-t_2)})\pi_{t_1,t_2}$, and a similar equality involving $\pi_*$ holds if $t_2>t_1$, proving that $M$ generates $\pi_{t_1,t_2}$ for all $t_1,t_2\geq0$.

To prove the assertion by induction, we note that $\Hom_0([0],[0])=\Bbbk 1_0$ is generated by $M$. So let us consider $m,n\geq 0$, one of them positive, assuming that $\Hom_0([m'],[n'])$ is generated by $M$ for all $0\leq m'\leq m,0\leq n'\leq n$ as long as $m'<m$ or $n'<n$. We consider any partition $\pi$ in $\Hom_0([m],[n])$. Let us pick any connected component of $\pi$. By pre- and post-composing with suitable permutations we can reorder the vertices such that the selected connected component consists exactly of the first $t_1$ upper vertices and the first $t_2$ lower vertices, where $t_1$ or $t_2$ may be $0$, but not both, since the component is non-empty. This modified partition decomposes as a tensor product of a partition $\pi_{t_1,t_2}$ corresponding to the selected connected component and a remainder. By the induction hypothesis, the remainder is generated by $M$, and as we showed previously, $\pi_{t_1,t_2}$ and also permutations are generated by $M$. Hence $\pi$ is generated by $M$.

As any morphism in $\cR^0_t$ is a linear combination of partitions, this proves the assertion.
\end{proof}

To illustrate the notation from the proof with an example, take $t_1=4,t_2=3$, then
\[
\pi_{4,3} = \tpart{4,4,3,3}{\tpid{1,...,4} \tpid[2]{1,...,3} \tpid[3]{1,...,3} \draw (1,1.5) -- (4,1.5) (1,3.5)--(3,3.5);}
\ .
\]
We note that this set of generating morphisms can alternatively be derived from the universal property of $\uRep(S_t)$ proved in \cite{Del}*{Proposition 8.3}, which exhibits $\uRep(S_t)$ as the universal symmetric monoidal category with a Frobenius algebra object.

\begin{corollary} The morphisms in $\Hom_1$, $\Hom$ are matrices of morphisms generated by $M$ under composition, tensor product, and $\Bbbk$-linear combination.
\end{corollary}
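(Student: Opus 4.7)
The plan is to reduce the statement mechanically to \Cref{generatorprop} by unravelling the definitions of $\cR^1_t$ and $\cR_t$ given in \Cref{deligne-sect}. The key observation is that both the additive closure and the idempotent completion add no genuinely new morphisms beyond matrix aggregates of morphisms in $\cR^0_t$.

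First, I would treat $\Hom_1$. By construction $\cR^1_t$ is the additive closure of $\cR^0_t$, so an object is a formal finite direct sum $[n_1]\oplus\cdots\oplus[n_k]$, and a morphism $[n_1]\oplus\cdots\oplus[n_k]\to[m_1]\oplus\cdots\oplus[m_l]$ is precisely an $l\times k$ matrix whose $(i,j)$-entry lies in $\Hom_0([n_j],[m_i])$. By \Cref{generatorprop}, each such entry is in the submonoidal closure of $M$ under composition, tensor product, and $\Bbbk$-linear combination. This proves the claim for $\Hom_1$.

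Next, I would pass to $\Hom$. The category $\cR_t$ is the Karoubi envelope of $\cR^1_t$ (cf.\ \Cref{idempotent-appendix}), so its objects are pairs $(X,e)$ with $e\colon X\to X$ an idempotent in $\cR^1_t$, and a morphism $(X,e)\to(Y,f)$ is a morphism $g\colon X\to Y$ in $\cR^1_t$ satisfying $fge=g$. In particular, every morphism in $\Hom$ is, as a morphism of underlying objects, a morphism in $\Hom_1$, and hence again a matrix of morphisms generated by $M$ as above.

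There is no real obstacle here; the only point one has to be careful about is that the pair $(X,e)$ itself does not introduce new morphisms, only a restriction to those $g$ with $fge=g$ — and such $g$ are still matrices of morphisms in $\Hom_0$ generated by $M$. Combining these two observations yields the corollary.
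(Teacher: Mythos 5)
Your proposal is correct and is precisely the argument the paper leaves implicit: the corollary is stated without proof because it follows directly from \Cref{generatorprop} together with the definitions of the additive closure (morphisms are matrices over $\Hom_0$) and the Karoubi envelope (morphisms $(X,e)\to(Y,f)$ are morphisms $g=fge$ in $\Hom_1$, so no new morphisms appear). Nothing is missing.
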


\section{The Monoidal Center of Deligne's Category}\label{delignecenter-section}

The goal of this section is to give explicit descriptions of a large class of objects in the monoidal center $\cZ(\cR_t)$ of Deligne's category $\cR_t=\uRep(S_t)$. These objects generate a ribbon category $\cD_t$ with a full and essentially surjective functor to $\cZ(\Rep(S_n))$ if $t=n$.

Let $(V,c)$ be an object in $\cZ(\cR_t)$, where $V=([n],e)$ is an object in $\cR_t$ and $c\colon V\otimes \ide_{\cR_t}\isomorph \ide_{\cR_t}\otimes V$ is a half-braiding. We observe that, as all morphisms in $\cR_t$ are also morphisms in $\cR_t^1$, the half-braiding $c$ is determined by the isomorphism $c_1:=c_{[1]}$.
Indeed, given $c_1$, we obtain $c_m=c_{[m]}$ inductively by setting $c_{[0]}=\ide_V$, and
\begin{align}\label{inducedc}
c_{m+1}=(\ide_V\otimes c_1)(c_{m}\otimes 1).
\end{align}

Equivalently, $c$ is determined by the isomorphism
\begin{align}
d_1:=\Psi_{[1],[n]}c_{1}\colon V\otimes [1]\isomorph V\otimes [1].\label{d1}
\end{align}
Moreover, consider the isomorphism
$c_2\colon V\otimes [2]\isomorph [2]\otimes V,$
or, equivalently, the isomorphism
\begin{align} d_2:= (\ide_{V}\otimes \Psi_{[1],[1]})(d_1\otimes 1)(\ide_{V}\otimes \Psi_{[1],[1]})(d_1\otimes 1).\label{d2}
\end{align}

Note that the morphisms $c_1,c_2$ and $d_1,d_2$, respectively, carry the same information, we introduce the latter to simplify certain computations (such as checking the naturality conditions \eqref{centercond1}--\eqref{centercond2} below) in later proofs.

\subsection{Conditions for Objects in  \texorpdfstring{$\cZ(\uRep(S_t))$}{Z(Rep(St))}}\label{centerconds}
In this section, we present commutation relations which characterize exactly those $d_1, d_2$ (as above) which determine an object in $\cZ(\cR_t)$.

\begin{proposition}\label{centerconditions} Let $W=([n]^{\oplus k},e)$ be an object in $\cR_t$, and let $d_1\colon W\otimes [1]\to W\otimes [1]$ be a morphism. Then $d_1$ determines a half-braiding $c$ through Equations (\ref{inducedc})--(\ref{d1}), and hence an object $(W,c)$ in $\cZ(\cR_t)$, if and only if \begin{gather}
 d_1 (\ide_{W}\otimes\pi_*) = \ide_W\otimes \pi_* \ ,
\quad
 (\ide_W\otimes\pi^*)  d_1 = \ide_W\otimes \pi^*\ ,\label{centercond1}
\\
 d_2 (\ide_W\otimes\pi_X) = (\ide_W\otimes\pi_X) d_2\ ,
\quad
 d_2 (\ide_W\otimes\pi_H) = (\ide_W\otimes\pi_H) d_2\ .\label{centercond2}
\end{gather}

\end{proposition}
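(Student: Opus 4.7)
The plan is to exploit Proposition \ref{generatorprop}, which exhibits $M=\{\pi_*,\pi^*,\pi_H,\pi_X\}$ as a generating set for $\Hom_0$ under composition, tensor product, and $\Bbbk$-linear combinations. This will collapse the infinite family of naturality conditions required for a half-braiding into four explicit identities. The forward direction is essentially a specialization: if $(W,c)$ lies in $\cZ(\cR_t)$, then naturality of $c$ applied to each generator in $M$, rewritten via the symmetry $\Psi$ and the definitions (\ref{d1})--(\ref{d2}) of $d_1, d_2$, gives exactly (\ref{centercond1})--(\ref{centercond2}).

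For the converse, I would first observe that the tensor compatibility (\ref{tensorcomp}) is already built into the inductive formula (\ref{inducedc}), and as noted in Section \ref{prelims}, the rigidity of $\cR_t$ means that a half-braiding is merely a natural transformation satisfying the tensor compatibility and $c_{[0]}=\ide_W$; invertibility is then automatic. Hence the only remaining task is to verify naturality of $c$ with respect to every morphism in $\cR_t$. Naturality is preserved under $\Bbbk$-linear combinations and compositions by formal diagram-chasing, and under tensor products by tensor compatibility. So by Proposition \ref{generatorprop}, naturality on $\Hom_0$ is equivalent to naturality on the four generators in $M$. To extend from $\Hom_0$ to $\Hom_1$ (matrices of $\Hom_0$-morphisms) the argument is componentwise, and to extend from $\Hom_1$ to $\Hom$ one pre- and post-composes with the defining idempotents of the objects.

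It then remains to translate naturality at each generator into the stated form. For $\pi_*\colon[0]\to[1]$ we have $c_0=\ide_W$, so naturality reads $c_1(\ide_W\otimes\pi_*)=\pi_*\otimes\ide_W$; applying $\Psi_{[1],W}$ from the left and invoking naturality of the symmetry identifies the right side with $\ide_W\otimes\pi_*$, yielding the first identity of (\ref{centercond1}); the case of $\pi^*$ is dual. For the endomorphisms $\pi_X,\pi_H$ of $[2]$, naturality of $c$ reads $c_2(\ide_W\otimes\pi)=(\pi\otimes\ide_W)c_2$, and since $\pi$ is a morphism in $\cR^0_t$ it slides freely past each factor of $\Psi_{[1],[1]}$ appearing in the definition of $d_2$; this converts the condition to $d_2(\ide_W\otimes\pi)=(\ide_W\otimes\pi)d_2$, giving (\ref{centercond2}).

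The principal obstacle is bookkeeping: verifying that the change of variables $c_i\mapsto d_i$ via the symmetries (\ref{d1})--(\ref{d2}) correctly moves each naturality statement from the form $c(\ide\otimes f)=(f\otimes\ide)c$ to the form $d(\ide\otimes f)=(\ide\otimes f)d$, with no stray symmetries or sign discrepancies. The two factors of $\Psi_{[1],[1]}$ in the definition of $d_2$ re-route one of the $[1]$-strands so that both ``conjugating'' copies of $\pi$ end up on the $W$-side of the morphism, which accounts for the apparently asymmetric form of (\ref{centercond2}) compared with the symmetric braiding convention. A string-diagrammatic calculation with partition pictures is the cleanest route to verify all four translations simultaneously.
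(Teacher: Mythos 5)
Your proposal follows essentially the same route as the paper's proof: reduce naturality of the candidate half-braiding to naturality at the generating set $M$ of Proposition \ref{generatorprop}, invoke rigidity/pivotality so that invertibility is automatic, and pass to the idempotent completion (the paper does this via Proposition \ref{idempotentcenter-prop}, which is exactly your ``pre- and post-compose with the defining idempotents'' step). Your explicit translation of generator-naturality into the four conditions via $d_1=\Psi_{[1],W}c_1$ and $d_2=\Psi_{[2],W}c_2$ is correct (and is in fact spelled out in more detail than in the paper), so the argument is sound.
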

\begin{proof}
The objects of the category $\cR_t^1$ are generated under tensor products and direct sums by the object $[1]$. Therefore, as detailed in Equations (\ref{inducedc})--(\ref{d1}), $d_1$ determines $c_{[n]}$. Hence, using Proposition \ref{idempotentcenter-prop}, $d_1$ determines a half braiding $c$ if we can show that the resulting morphisms $c_{[n]}$ are natural in the second component. That is, for any morphism $f\colon [n]\to [m]$, we require
$$c_{[m]}(\ide_W\otimes f)=(f\otimes\ide_W)c_{[n]}\ .$$
Recall that $\cR_t$ is a pivotal category so that left and right dual exist (and coincide). This implies, by  \Cref{centerdual}, that if $c$ is natural, then it is automatically invertible.

It is sufficient to check that $c$ is natural with respect to generating morphisms. Using Proposition \ref{generatorprop}, this implies that Equations (\ref{centercond1})--(\ref{centercond2}) are necessary and sufficient for $c$ to define a half-braiding. 
Also, note that by  \Cref{tensorcomp}, $c_0=\ide_W$.
\end{proof}

\subsection{Lifts of Induced Representations}\label{liftinduced-sect}

We recall from Section \ref{Sncenter} that for any $n\geq 0$, the Drinfeld center of $\Rep(S_n)$ is the category of $S_n$-crossed modules. We also recall that if
\begin{itemize}
\item $\mu$ is a partition of $n$,
\item $\sigma\in S_n$ is an element of cycle type $\mu$, and
\item $Z=Z(\sigma)$ is the centralizer subgroup of $\sigma$ in $S_n$,
\end{itemize}
If $\rho\colon \Bbbk Z\to M_k(\Bbbk)$ determines an irreducible representation $V$ of $Z$,
then this data defines a simple object $W=W_{\mu,\rho}$ in $\cZ(\Rep(S_n))$ which is independent of and unique up to the choice of $\sigma$ and the isomorphism class of $V$, as explained in Theorem \ref{Sncenter-thm}. This yields a parametrization of all simple objects in this category. 
As an $S_n$-representation, $W_{\mu,\rho}=\Ind_Z^{S_n}(V)$.

As a first step towards lifting $W_{\mu,\rho}$ to an object in $\cZ(\cR_t)$, we now define objects in $\cR_t$ which are mapped to the induced $S_n$-representation $W_{\mu,\rho}$ under the functor $\cF_n$ from Section \ref{functor-sect}.


For the remainder of Section \ref{delignecenter-section}, let us fix $n,\mu,\sigma,Z$ as above and consider, more generally, $k\geq 1$ and $\rho\colon \Bbbk Z\to M_k(\Bbbk)$, a linear map satisfying
\begin{align}\label{eq-rho}
 \sum_{z_1 z_2=z} \rho(z_1)\rho(z_2)=|Z| \rho(z),
 &&\textnormal{ for all } z\in Z.
\end{align}
By the following lemma, the maps $\rho$ satisfying \Cref{eq-rho} are in 1-to-1 correspondence with the idempotents in the semisimple algebra $\Bbbk Z\otimes M_k(\Bbbk)$, that is, with the submodules of the left (or right) regular representation of this algebra.

\begin{lemma}\label{lem-rho-idempotent} \Cref{eq-rho} holds if and only if $|Z|\inv\sum_{z\in Z} z\otimes \rho(z)$ is an idempotent in $\Bbbk Z\otimes M_k(\Bbbk)$.
\end{lemma}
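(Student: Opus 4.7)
The plan is a direct computation. Set $E := |Z|^{-1}\sum_{z\in Z} z\otimes \rho(z)$ inside $\Bbbk Z\otimes M_k(\Bbbk)$. Since multiplication in $\Bbbk Z\otimes M_k(\Bbbk)$ respects both tensor factors, I expand
\[
E\cdot E = |Z|^{-2} \sum_{z_1, z_2\in Z} z_1 z_2\otimes \rho(z_1)\rho(z_2)
\]
and then reindex the double sum by the product $z:=z_1 z_2\in Z$, so that for each $z$ the coefficient of the group element $z$ becomes $\sum_{z_1 z_2=z}\rho(z_1)\rho(z_2)\in M_k(\Bbbk)$.

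Next I compare with $E$ itself. Because the group elements $\{z\mid z\in Z\}$ form a $\Bbbk$-basis of $\Bbbk Z$, the canonical isomorphism $\Bbbk Z\otimes M_k(\Bbbk)\cong\bigoplus_{z\in Z} M_k(\Bbbk)$ identifies elements of $\Bbbk Z\otimes M_k(\Bbbk)$ with their families of matrix coefficients indexed by $z\in Z$. Under this identification, the identity $E^2 = E$ is equivalent to the componentwise equalities
\[
|Z|^{-2}\sum_{z_1 z_2 = z}\rho(z_1)\rho(z_2) = |Z|^{-1}\rho(z),\qquad \text{for all } z\in Z,
\]
which, after multiplying both sides by $|Z|^{2}$, is exactly \Cref{eq-rho}. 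This gives both implications simultaneously.

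There is no real obstacle here: the statement is essentially a restatement of the convolution multiplication on the group algebra tensored with matrices, so the only content is to check that passing between the element $E\in \Bbbk Z\otimes M_k(\Bbbk)$ and the map $\rho\colon \Bbbk Z\to M_k(\Bbbk)$ intertwines multiplication with convolution. The proof is thus a single short computation of $E^2$ and a coefficient comparison using the linear independence of group elements.
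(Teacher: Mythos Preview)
Your proof is correct and follows exactly the same approach as the paper: compute $E^2$ by expanding the double sum, reindex by the product $z=z_1z_2$, and compare coefficients of the basis elements $z\in Z$. The paper's own proof is a one-line version of precisely this computation.
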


\begin{proof} Squaring $|Z|\inv \sum_{z\in Z} z\otimes \rho(z)$ we get
$|Z|\inv \sum_{z\in Z} z\otimes |Z|\inv \sum_{z_1 z_2=z} \rho(z_1)\rho(z_2)$.
\end{proof}

\begin{example}\label{ex-rho-irrep} Let $\rho$ be the action of $Z$ on a $k$-dimensional module. Then $\rho$ satisfies \Cref{eq-rho}.
\end{example}

\begin{example} Let $\chi$ be an irreducible character of $Z$ and define $\rho(z):=\chi(e) \chi(z\inv)\in\Bbbk$ for all $z\in Z$. Then $|Z|\inv\sum_{z\in Z} z\otimes \rho(z)$ is the $\chi$-isotypical projector, an idempotent, and $\rho$ satisfies \Cref{eq-rho}. Similarly, if $\chi$ is the sum of pairwise distinct (and hence, orthogonal) irreducible characters of $Z$, then $\rho$ satisfies \Cref{eq-rho}.
\end{example}

\begin{example} For the irreducible $Z$-module $V$, as $\Bbbk Z$ is semisimple, there is a primitive idempotent $e_V$ in $\Bbbk Z$ such that $V$ is isomorphic to $\Bbbk Z e_V$ as $Z$-module. We can write $e_V=\sum_{z\in Z} \rho(z) z$ with coefficients $\rho(z)\in\Bbbk$. Then $\rho$ satisfies \Cref{eq-rho}.
\end{example}

We recall from Section \ref{deligne-sect} that $\cR_t$ is the idempotent completion of $\cR_t^1$. In particular, recall the direct sum convention and the matrix notation for morphisms introduced in \Cref{matrix-notation}. 
In the following, we associate an idempotent $e_\rho$ in $\End([n]^{\oplus k})$ to any idempotent in the algebra $\Bbbk Z\otimes M_k(\Bbbk)$ (which uniquely corresponds to a map $\rho$ by \Cref{lem-rho-idempotent}). 

\begin{definition}[The idempotent $e_\rho=e_{\sigma,\rho}$] \label{e-def} We define a morphism $e_\rho=e_{\sigma,\rho}\in\End([n]^{\oplus k})$ by
\begin{equation} e_\rho=e_{\sigma,\rho} = \frac{1}{|Z|} \sum_{z\in Z} x_z \otimes \rho(z).
\end{equation}
\end{definition}

\begin{lemma}\label{e-idemp} The morphism $e_\rho$ is idempotent.
\end{lemma}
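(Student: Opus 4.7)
The plan is a direct computation of $e_\rho \circ e_\rho$ that reduces the idempotency claim to the algebraic identity \eqref{eq-rho} on $\rho$, modulo a single key multiplicativity fact for the morphisms $x_g$ attached to permutations.

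The main ingredient I would first establish (or cite) is the identity
\[
x_{z_1} \circ x_{z_2} = x_{z_1 z_2}, \qquad z_1, z_2 \in Z \subseteq S_n,
\]
in $\End([n])$, where $z_1, z_2$ are viewed as permutation partitions in $P_{n,n}$. At the level of raw partitions, $\pi_{z_1}\circ\pi_{z_2}=\pi_{z_1 z_2}$ (composition of permutations produces no closed components and hence no factor of $t$). Unwinding the recursion $x_\lambda=\lambda-\sum_{\mu>\lambda}x_\mu$ of \Cref{ordering} on both sides and inducting downward in the partial order on $P_{n,n}$, one checks that the terms with strict coarsenings of $\pi_{z_1 z_2}$ all cancel, leaving only $x_{z_1 z_2}$. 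Equivalently, this is the well-known statement that the elements $\{x_g : g \in S_n\}$ span a subalgebra of $\End([n])$ isomorphic to $\Bbbk S_n$, a standard fact in Deligne's setup (compare \cite{CO}).

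Granting this, the proof proceeds by a matrix-level computation using the notation of \Cref{matrix-notation}. The composition rule for matrix-tensor morphisms reads $(f\otimes A)\circ (g\otimes B) = (f\circ g)\otimes (AB)$ for $f,g\in \End([n])$ and square matrices $A,B\in M_k(\Bbbk)$. Applying this to each pair of summands of $e_\rho$ gives
\[
e_\rho^2 = \frac{1}{|Z|^2}\sum_{z_1,z_2\in Z} (x_{z_1}\circ x_{z_2})\otimes \bigl(\rho(z_1)\rho(z_2)\bigr) = \frac{1}{|Z|^2}\sum_{z_1,z_2\in Z} x_{z_1 z_2}\otimes \bigl(\rho(z_1)\rho(z_2)\bigr).
\]
Reindexing by $z=z_1 z_2$ and invoking the hypothesis $\sum_{z_1 z_2 = z}\rho(z_1)\rho(z_2)=|Z|\rho(z)$ from \Cref{eq-rho} collapses the double sum to
\[
e_\rho^2 = \frac{1}{|Z|^2}\sum_{z\in Z} x_z \otimes |Z|\rho(z) = \frac{1}{|Z|}\sum_{z\in Z} x_z \otimes \rho(z) = e_\rho.
\]

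The principal obstacle is isolating and justifying the permutation composition rule $x_{z_1}\circ x_{z_2}=x_{z_1 z_2}$; once this is in hand everything else is purely formal, requiring only the matrix-tensor bookkeeping and a reindexing of the sum to match the hypothesis on $\rho$. A clean alternative for the multiplicativity step, if desired, is to observe that $z\mapsto x_z$ defines a $\Bbbk$-algebra homomorphism $\Bbbk S_n\to \End([n])$ because the composition formula for basis elements $x_\lambda$ in $\uRep(S_t)$ is a polynomial in $t$ that must agree with the classical convolution on $\Bbbk S_n$ for all sufficiently large integer values of $t$ via $\cF_n$ (\Cref{rem-f-x-g-n}), hence agrees for all $t$.
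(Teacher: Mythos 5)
Your proof is correct and follows essentially the same route as the paper: both reduce the idempotency of $e_\rho$ to the multiplicativity $x_{z_1}x_{z_2}=x_{z_1 z_2}$ combined with \Cref{eq-rho} (the paper packages this last step as \Cref{lem-rho-idempotent}, while you inline the same squaring-and-reindexing computation). For the multiplicativity itself the paper is more economical, simply writing $x_{g_1}x_{g_2}=(x_{1_n}g_1)(x_{1_n}g_2)=x_{1_n}g_1g_2=x_{g_1g_2}$ using $x_g=x_{1_n}g=g\,x_{1_n}$ and $x_{1_n}^2=x_{1_n}$, which you could substitute for your sketched cancellation induction or the interpolation-at-large-$t$ argument (the latter would also need the faithfulness of $\cF_N$ on $\End([n])$ for large $N$ to be made explicit).
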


\begin{proof} We note that for any $g_1,g_2\in S_n$,
\[ x_{g_1} x_{g_2} = (x_{1_n} g_1) (x_{1_n} g_2) = x_{1_n} g_1 g_2 = x_{g_1 g_2}
.
\]
Hence, the assertion follows from \Cref{lem-rho-idempotent}.
\end{proof}

We can now determine the image of the objects $([n]^{\oplus k},e_\rho)$ just constructed under Deligne's functor $\cF_n$ to $\Rep(S_n)$ from Section \ref{functor-sect}.
For this, let us also consider the action of $\Bbbk Z\otimes M_k(\Bbbk)$ on $\Bbbk S_n\otimes \Bbbk^k$ or $\Bbbk Z\otimes \Bbbk^k$ defined by
\[ (z\otimes A)\cdot (g\otimes v) = gz^{-1} \otimes Av
\ .
\]
Let us temporarily denote the idempotent in $\Bbbk Z\otimes M_k(\Bbbk)$ associated with $\rho$ according to \Cref{lem-rho-idempotent} by $f$, that is,
\[ f = \frac{1}{|Z|} \sum_{z\in Z} z\otimes\rho(z)
\ .
\]
\begin{proposition}\label{inducedlift-prop} The image of $([n]^{\oplus k},e_\rho)$ under $\cF_n$ is isomorphic to the $S_n$-module
$
f \cdot(\Bbbk S_n\otimes\Bbbk^k)
$.
\end{proposition}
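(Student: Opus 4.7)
The plan is to unwind the definition of $\cF_n$ on objects and apply the explicit formulas from Remark \ref{rem-f-x-g-n}. Since $\cF_n$ is $\Bbbk$-linear and preserves direct sums, $\cF_n([n]^{\oplus k}) = V_n^{\otimes n}\otimes\Bbbk^k$, and the functor sends $e_\rho$ to the endomorphism
\begin{equation*}
\cF_n(e_\rho) = \frac{1}{|Z|}\sum_{z\in Z} f(x_z)\otimes \rho(z).
\end{equation*}
By Remark \ref{rem-f-x-g-n}, each $f(x_z)$ annihilates any $v_{\mathbf{i}}$ with a repeated index and factors as $f(x_z) = f(x_{1_n})\circ f(x_z) = f(x_z)\circ f(x_{1_n})$, so the image of $\cF_n(e_\rho)$ is contained in the image of $f(x_{1_n})\otimes\ide_{\Bbbk^k}$. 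The latter is identified with $\Bbbk S_n\otimes\Bbbk^k$ via $v_{(i_1,\ldots,i_n)}\leftrightarrow (k\mapsto i_k)$.

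Next, I would verify that the restriction of $\cF_n(e_\rho)$ to this submodule coincides with the action of $f = |Z|\inv\sum_z z\otimes\rho(z)$ defined before the statement. Under the identification, Remark \ref{rem-f-x-g-n} says that $f(x_z)$ acts on $\Bbbk S_n$ as right multiplication by $z\inv$, hence
\begin{equation*}
\cF_n(e_\rho)(g\otimes v) = \frac{1}{|Z|}\sum_{z\in Z} gz\inv\otimes \rho(z)v = f\cdot(g\otimes v)
\end{equation*}
for all $g\in S_n$ and $v\in\Bbbk^k$. Since $\cF_n(e_\rho)$ is idempotent by \Cref{e-idemp} and vanishes outside $\Bbbk S_n\otimes\Bbbk^k$, its image equals the image of the action of $f$ on $\Bbbk S_n\otimes\Bbbk^k$, which is precisely $f\cdot(\Bbbk S_n\otimes\Bbbk^k)$.

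Finally, I would check $S_n$-equivariance. Under the identification $v_{(i_1,\ldots,i_n)}\leftrightarrow h_{(i_1,\ldots,i_n)}$ (the permutation $k\mapsto i_k$), the diagonal $S_n$-action on $V_n^{\otimes n}$ corresponds to left multiplication on $\Bbbk S_n$, because $g\cdot v_{(i_1,\ldots,i_n)} = v_{(g(i_1),\ldots,g(i_n))}$ is sent to $g\circ h_{(i_1,\ldots,i_n)}$. The action of $f$ involves only right multiplication by $z\inv$ and the matrix $\rho(z)$ on $\Bbbk^k$, so it commutes with this $S_n$-action, making $f\cdot(\Bbbk S_n\otimes\Bbbk^k)$ an $S_n$-submodule and the resulting bijection an isomorphism of $S_n$-modules. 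The only subtlety I anticipate is to keep track of whether the $Z$-action appears as $z$ or $z\inv$ after passing through Remark \ref{rem-f-x-g-n}, which pins down the choice of the left action $(z\otimes A)\cdot(g\otimes v) = gz\inv\otimes Av$ used in the statement.
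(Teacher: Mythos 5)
Your proposal is correct and follows essentially the same route as the paper: both use $x_z = x_z x_{1_n}$ (via Remark \ref{rem-f-x-g-n}) to identify the image of $\cF_n(e_\rho)$ inside $\Bbbk S_n\otimes\Bbbk^k$ and then compute $\cF_n(e_\rho)(g\otimes v)=\tfrac{1}{|Z|}\sum_{z} gz^{-1}\otimes\rho(z)v$, matching the action of the idempotent $f$. Your explicit verification of $S_n$-equivariance (left multiplication commuting with the right action of $Z$) is a detail the paper leaves implicit, but it is the same argument.
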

\begin{proof} Denote $e=e_{\rho}$. Recall from Section \ref{functor-sect}, that $\cF_n([n])\cong V_n^{\otimes n}$, where $V_n$ is the permutation module on $\Bbbk^{ n}$. In particular, we have seen in \Cref{rem-f-x-g-n} that $\cF_n(([n],x_{1_n}))\cong\Bbbk S_n$ and that $\cF(x_{1_g})$ is the left regular action for each $g\in S_n$. As $x_g=x_g x_{1_n}$ for all $g\in S_n$, we derive 
\[ \cF_n(([n]^{\otimes k},e)
 = \cF_n(e) (V_n^{\otimes n}\otimes \Bbbk^k)
 = \cF_n(e x_{1_n}) (V_n^{\otimes n}\otimes \Bbbk^k)
 \cong \cF_n(e) (\Bbbk S_n\otimes \Bbbk^k)
\]
and
\[ \cF_n(e)(g\otimes v) = \frac{1}{|Z|} \sum_{z\in Z} gz^{-1} \otimes \rho(z)v
\]
for all $g\in S_n$ and $v\in V$.
This proves the assertion.
\end{proof}

\begin{remark}\label{rem-image-of-e-rho} If $\rho$ is an algebra map, then the module $f\cdot(\Bbbk S_n\otimes \Bbbk^k)$ is just the induced module $\Bbbk S_n \otimes_{\Bbbk Z} \Bbbk^k$ and can be described more explicitly as follows: as a vector space, it is $\Bbbk S_n/Z \otimes \Bbbk^k$ and if $\{h_i\}_i$ is a fixed set of representatives of the cosets $S_n/Z$, then the $S_n$-action is given, for $g\in S_n$, by
\[ g (h_i\otimes v) = h_j \otimes \rho(z) v,
\]
where we can write $g h_i=h_j z$, with $z\in Z$, uniquely. This recovers \Cref{inducedaction}.
Hence, $\cF_n([n]^{\oplus k},e_\rho)\cong \Ind_Z^{S_n}(V)$, where $V$ is the $Z$-module defined by $\rho$.
\end{remark}

\subsection{Interpolation Objects in the Center}\label{interpolationsect}

We now extend the construction from Section \ref{liftinduced-sect} by constructing a lift of the half-braiding isomorphism $c_{V_n}$ from \Cref{ch}. This way, we obtain a large supply of objects in the monoidal center $\cZ(\cR_t)$.

To lift the braiding, we introduce convenient notation. Let $\pi_*^*=\pi_*\pi^*\in\End([1])$ be the morphism given by the partition $\{\{1\},\{1'\}\}$ and for any $1\leq i\leq n$, let $C_i\in\End([n+1])$ be the morphism given by the partition $\{\{1,1'\},\dots,\{i,i',n+1,(n+1)'\},\dots,\{n,n'\}\}$. We define
\begin{align}\label{Eij-def} E^i_j = C_j (1_n\otimes \pi^*_*) C_i=
\tikz[partition]{\scriptsize
\foreach \x in {1,...,4,4.8}  \makeDots{\x}{1};
\tpid{1,...,4}
\foreach \x in {1,2,3} \node at (\x+0.5,1.5){...};
\node[above] at (2,1){$i$};
\node[below] at (3,2){$j'$};
\node[right] at (4.8,1){$n+1$};
\node[right] at (4.8,2){$(n+1)'$};
\draw (4.8,1) to[bend left] (2,1);
\draw (4.8,2) to[bend right] (3,2);
}
\in \End([n+1])\, .
\end{align}
Hence, $E_j^i$ corresponds to the partition $\Set{\Set{1,1'},\ldots, \Set{i,i',n+1},\ldots, \Set{j,j',(n+1)'},\ldots,\Set{n,n'}}$ for all $i\neq j$ and to the partition $\Set{\Set{1,1'},\ldots,\Set{i,i',(n+1),(n+1)'},\ldots,\Set{n,n'}}$ if $i=j$.

\begin{definition}[The morphisms $d_1$, and $d_1^\rho=d_1^{\sigma,\rho}$] \label{def-d1-d1rho} Fix $n,\sigma,\rho$ as in the previous section. We define a morphism $d_1=d_1^\sigma\in\End([n+1]))$ by
\begin{equation}\label{d1dash} d_1 = 1_{n+1} + \sum_{i=1}^n (E^i_{\sigma(i)}-E^i_i)\, ,
\end{equation}
so we have $d_1^{\oplus k}\in\End([n]^{\oplus k}\otimes [1])\cong \End([n+1]^{\oplus k})\cong\End([n+1])\otimes M_k(\Bbbk)$ given by
\begin{equation} d_1^{\oplus k} = d_1\otimes I_k = \Big(1_{n+1} + \sum_{i=1}^n \big(E^i_{\sigma(i)}-E^i_i\big) \Big) \otimes I_k
\ ,
\end{equation}
where $I_k\in M_k(\Bbbk)$ is the identity matrix.

Further, we denote by $d_1^\rho=d_1^{\sigma,\rho}$ the restriction of $d_1$ to the object $([n]\otimes [1],e_{\mu}\otimes 1)$, using $e_\rho=e_{\sigma,\rho}$ from Definition \ref{e-def}. That is, $d_1^\rho=(e_{\rho}\otimes 1)d_1(e_{\rho}\otimes 1)$. 
\end{definition}

\begin{theorem}\label{halfbraiding-prop} The morphism $d_1^\rho$ induces a half-braiding $c^\rho$ for the object $([n]^{\oplus k},e_\rho)$ using Equations (\ref{inducedc})--(\ref{d1}), and hence defines an object $(([n]^{\oplus k},e_\rho),c^\rho)$ in $\cZ(\cR_t)$.
\end{theorem}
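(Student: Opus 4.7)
The plan is to apply \Cref{centerconditions}, reducing the claim to verifying the four compatibility conditions for $d_1^\rho$ against the generating morphisms $\pi_*$, $\pi^*$, $\pi_X$, $\pi_H$. First I would establish a key intertwining: for every $z \in Z(\sigma)$, the morphism $d_1$ commutes with $x_z \otimes 1_{[1]}$. Conjugating the partition $E^i_j$ by $x_z \otimes 1_{[1]}$ relabels the indices to give $E^{z(i)}_{z(j)}$, and combined with the centralizer relation $z\sigma = \sigma z$ this shows $\sum_i (E^{z(i)}_{z\sigma(i)} - E^{z(i)}_{z(i)}) = \sum_i (E^i_{\sigma(i)} - E^i_i)$ after reindexing $i \mapsto z^{-1}(i)$. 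Consequently $[d_1, e_\rho \otimes 1_{[1]}] = 0$, and the same reasoning yields $[d_2, e_\rho \otimes 1_{[2]}] = 0$, so it suffices to verify each condition for the unrestricted $d_1$ and $d_2$.

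For the cup/cap conditions involving $\pi_*$ and $\pi^*$, I would concatenate partitions directly and check that $E^i_j(\ide_{[n]} \otimes \pi_*)$ depends only on $j$: the resulting partition absorbs the newly created bottom strand into the $j$-th vertical strand, regardless of whether $i = j$ or $i \neq j$. Summing, both $\sum_i E^i_{\sigma(i)}(\ide \otimes \pi_*)$ and $\sum_i E^i_i(\ide \otimes \pi_*)$ equal the same expression $\sum_j J_j$ via the bijection $\sigma$, so the correction terms in $d_1 = 1_{n+1} + \sum_i(E^i_{\sigma(i)} - E^i_i)$ telescope to zero, yielding $d_1(\ide \otimes \pi_*) = \ide \otimes \pi_*$. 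The dual argument---$(\ide \otimes \pi^*)E^i_j$ depends only on $i$---handles $(\ide \otimes \pi^*) d_1 = \ide \otimes \pi^*$.

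For the commutation conditions involving $\pi_X$ and $\pi_H$, I would rewrite $d_2 = d_1^{(n+2)} \circ d_1^{(n+1)}$, where $d_1^{(a)}$ denotes $d_1$ applied with its extra strand positioned at $a$, exploiting that $\ide_V \otimes \Psi_{[1],[1]}$ conjugates $d_1^{(n+1)}$ into $d_1^{(n+2)}$. Condition (c) then reduces to showing $d_1^{(n+1)}$ and $d_1^{(n+2)}$ commute: each summand $E^i_a|_{n+1}$ engages strand $n+1$ while $E^j_b|_{n+2}$ engages strand $n+2$, so their partition compositions in either order yield identical merged partitions (a short direct check). The hardest step will be condition (d) on $\pi_H$: composing with $\ide \otimes \pi_H$ fuses the two extra strands into a single block, collapsing the double sum over $(i,j)$ in $d_2$ into a single sum that must match, via $\sigma$'s bijectivity, whether the fusion occurs before or after $d_2$. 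This requires careful partition bookkeeping, but the underlying geometric picture---verified classically via $\cF_n$ in $\Rep(S_n)$---is that $d_2$ acts as $\sigma$ on two parallel strands diagonally, an action which manifestly commutes with both swap and merge.
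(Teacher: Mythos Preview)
Your reduction via \Cref{centerconditions} and the commutation $[d_1,e_\rho\otimes 1]=0$ follow the paper's approach (this is its \Cref{decommute}), and your treatment of $\pi_*,\pi^*,\pi_X$ is sound; the observation that the $\pi_X$-condition is equivalent to $d_1^{(n+1)}$ and $d_1^{(n+2)}$ commuting is a clean reformulation of the paper's direct conjugation computation.

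However, the step ``so it suffices to verify each condition for the unrestricted $d_1$ and $d_2$'' is false for $\pi_H$, and this is where your argument breaks. The commutator $[\,1_n\otimes\pi_H,\,d_2\,]$ does \emph{not} vanish in $\End([n+2])$; it only vanishes after post-composing with $e_\rho\otimes 1_2$. Concretely, for $n=2$ and $\sigma=(12)$ one computes $(1_2\otimes\pi_H)E^{1,|}_{2,|}=E^{1,2}_{2,2}$ while $E^{1,|}_{2,|}(1_2\otimes\pi_H)=E^{1,1}_{2,1}$, and these are distinct partitions. What rescues the identity is that applying $\pi_H$ to a term $E^{i,j}_{a,b}$ with $i\neq j$ (or $a\neq b$) forces two of the first $n$ strands to merge, and any such partition is annihilated by $e_\rho$ because $e_\rho$ factors through $x_{1_n}$. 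This annihilation property (the paper's \Cref{lem-annihilation} and \Cref{lem-annihilation-EE}) is the essential mechanism: it kills all ``off-diagonal'' contributions and reduces the $\pi_H$-check to the diagonal terms $i=j$, where $\sigma$'s bijectivity finishes the job.

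Your sketch (``collapsing the double sum into a single sum \ldots\ via $\sigma$'s bijectivity'') gestures at the right endgame but never supplies the reason the off-diagonal terms disappear; the appeal to $\cF_n$ and $\Rep(S_n)$ cannot substitute, since that functor only sees the quotient where $x_{1_n}$ already projects onto distinct-index tensors. You need to prove the annihilation lemma for $x_{1_n}$ (or $e_\rho$) against coarser partitions and invoke it explicitly in the $\pi_H$ step. A minor related point: ``conjugating $E^i_j$ by $x_z\otimes 1$'' is ill-posed since $x_z$ is not invertible---conjugate by the permutation $z\otimes 1$ instead, then use $x_z=x_{1_n}z$ and $[E^i_j,\,x_{1_n}\otimes 1]=0$.
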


\begin{definition}[Interpolation Objects of the Center] \label{intobject-def}For $\sigma\in S_n$ and $\rho\colon Z\to M_k(\Bbbk)$ satisfying \Cref{eq-rho}, let $e_\rho$ be the idempotent from \Cref{e-def}. We denote the object of $\cZ(\cR_t)$ given by $([n]^{\oplus k}, e_\rho)$ together with the half-braiding $c^\rho$ obtained from $d_1^\rho$ (cf.~\Cref{halfbraiding-prop}), by $\uW{\sigma,\rho}$ (or $\uW{\rho}$ for short). We call objects of this form \emph{interpolation objects in $\cZ(\cR_t)$}.
\end{definition}


The following series of lemmas will result in the proof of \Cref{halfbraiding-prop}. We start by proving that $e_\rho$ annihilates morphisms coarser than $\pi_\sigma$.

\begin{lemma}\label{lem-annihilation}
Let $n\geq 2$, $\kappa\in S_n$ and $\lambda >\pi_\kappa$ be a partition. Then $e_{\rho}\lambda^{\oplus k}=\lambda^{\oplus k} e_{\rho}=0$.
\end{lemma}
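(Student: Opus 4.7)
The plan is to establish the stronger claim that $x_{\pi_z}\circ\lambda=\lambda\circ x_{\pi_z}=0$ for every $z\in S_n$, from which the lemma follows immediately since $e_\rho=|Z|^{-1}\sum_{z\in Z}x_z\otimes\rho(z)$. First I would reduce to the case $z=1_n$: the recursion $x_\pi=\pi-\sum_{\mu>\pi}x_\mu$ interacts simply with pre- and post-composition by permutations, and a straightforward induction on the coarseness order yields $\pi_\sigma\circ x_\pi=x_{\pi_\sigma\circ\pi}$ and $x_\pi\circ\pi_\sigma=x_{\pi\circ\pi_\sigma}$ for every $\sigma\in S_n$. Specialized to $\pi=\pi_{1_n}$, this gives $x_{\pi_z}=\pi_z\circ x_{\pi_{1_n}}=x_{\pi_{1_n}}\circ\pi_z$, so it suffices to show $x_{\pi_{1_n}}\circ\lambda=\lambda\circ x_{\pi_{1_n}}=0$.

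Next I would parametrize. The assignment $\alpha\mapsto\Lambda^\kappa_\alpha:=\{B\cup\kappa(B)':B\in\alpha\}$ is an order isomorphism from the partition lattice $\Pi_n$ of $\{1,\ldots,n\}$ (with ``coarser'' meaning ``larger'') onto $\{\mu\in P_{n,n}:\mu\geq\pi_\kappa\}$, sending the discrete partition $\hat0$ to $\pi_\kappa$. Under this isomorphism, the hypothesis $\lambda>\pi_\kappa$ becomes $\lambda=\Lambda^\kappa_\alpha$ for some $\alpha>\hat0$. Möbius inversion applied to the recursion gives $x_{\pi_{1_n}}=\sum_{\beta\in\Pi_n}\mu_{\Pi_n}(\hat0,\beta)\,\Lambda^{1_n}_\beta$, where $\mu_{\Pi_n}$ denotes the Möbius function of $\Pi_n$. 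A direct inspection of the stacking of diagrams (every block of either factor contains both top and bottom vertices, so no purely middle-level connected component arises and no $t$-factor is produced) yields the composition formulas
\begin{align*}
\Lambda^{1_n}_\beta\circ\Lambda^\kappa_\alpha=\Lambda^\kappa_{\alpha\vee\beta},\qquad \Lambda^\kappa_\alpha\circ\Lambda^{1_n}_\beta=\Lambda^\kappa_{\alpha\vee(\kappa^{-1}\cdot\beta)}.
\end{align*}

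Combining these and reindexing via $\gamma=\alpha\vee\beta$,
\begin{align*}
x_{\pi_{1_n}}\circ\lambda=\sum_{\gamma\geq\alpha}\Bigl(\sum_{\beta\in\Pi_n:\,\alpha\vee\beta=\gamma}\mu_{\Pi_n}(\hat0,\beta)\Bigr)\Lambda^\kappa_\gamma.
\end{align*}
The inner coefficient is the coefficient at $\gamma$ of $f_{\hat0}\cdot\delta_\alpha$ in the Möbius algebra of $\Pi_n$, whose product is $\delta_\beta\cdot\delta_{\beta'}=\delta_{\beta\vee\beta'}$ and whose primitive idempotents $f_x:=\sum_{y\geq x}\mu_{\Pi_n}(x,y)\delta_y$ form a complete orthogonal set with $\delta_x=\sum_{y\geq x}f_y$. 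Hence $f_{\hat0}\cdot\delta_\alpha=[\hat0\geq\alpha]\,f_{\hat0}$, which vanishes because $\alpha>\hat0$. The computation for $\lambda\circ x_{\pi_{1_n}}$ is analogous, using the $S_n$-invariance of $\mu_{\Pi_n}$ to substitute $\beta\mapsto\kappa\cdot\beta$.

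The hard part is the careful bookkeeping needed to verify the composition formulas $\Lambda^{1_n}_\beta\circ\Lambda^\kappa_\alpha=\Lambda^\kappa_{\alpha\vee\beta}$ and its opposite: one must track how the two block decompositions interact under stacking, confirm that the resulting block structure matches $\Lambda^\kappa_{\alpha\vee\beta}$ (respectively $\Lambda^\kappa_{\alpha\vee(\kappa^{-1}\cdot\beta)}$), and check that every middle vertex participates in a connected component reaching the top and bottom rows so that no spurious $t$-factor appears. Once this geometric identification is in place, the classical Möbius-algebra orthogonality in $\Pi_n$ closes the argument.
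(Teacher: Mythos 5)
Your proposal is correct, but it takes a genuinely different route from the paper's. The paper makes the same initial reduction (via $x_z=zx_{1_n}=x_{1_n}z$ it suffices to kill products with $x_{1_n}$, i.e.\ with $x_{p_\omega}$ for the discrete $\omega$), but then proves the vanishing by a self-contained induction on the number of blocks, using only the defining recursion $x_\lambda=\lambda-\sum_{\mu>\lambda}x_\mu$: for $\tau>\omega$ it shows $x_{p_\omega}p_\tau=p_\tau x_{p_\omega}=0$ by choosing a pair $i,j$ joined in $\tau$ but not in $\omega$ and splitting the coarser partitions according to whether $i,j$ are joined. You instead identify the interval above $\pi_\kappa$ in $P_{n,n}$ with the partition lattice $\Pi_n$, expand $x_{1_n}=\sum_\beta\mu_{\Pi_n}(\hat 0,\beta)\,p_\beta$ by M\"obius inversion, compute the relevant composites as joins, and finish with the orthogonality $f_{\hat 0}\cdot\delta_\alpha=0$ in the M\"obius algebra of $\Pi_n$ (equivalently Weisner's theorem on the interval $[\hat 0,\gamma]$). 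This buys a closed-form, structural explanation of the vanishing and even a slightly stronger statement ($x_g\lambda=\lambda x_g=0$ for all $g\in S_n$), at the cost of importing standard facts about $\mu_{\Pi_n}$ and of the diagrammatic bookkeeping you flag as the hard part; the paper's induction is more elementary and never leaves the recursion. One small caveat: with the paper's convention that in $f\circ g$ the diagram of $g$ is placed on top, your two composition formulas come out interchanged --- the computation gives $\Lambda^{1_n}_\beta\circ\Lambda^\kappa_\alpha=\Lambda^\kappa_{\alpha\vee\kappa^{-1}(\beta)}$ and $\Lambda^\kappa_\alpha\circ\Lambda^{1_n}_\beta=\Lambda^\kappa_{\alpha\vee\beta}$ --- but since you treat both orders and the M\"obius argument (with the $\kappa$-relabelling, using the $S_n$-invariance of $\mu_{\Pi_n}$) annihilates both sums, this only swaps the labels of the two cases and does not affect the validity of the proof.
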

\begin{proof}

Let $\tau\vdash n$ be a partition of $n$. Then we can define a partition $p_\tau$ in $P_{n,n}$ by partitioning $\Set{1,\ldots, n}$ according to $\tau$, and, in addition, requiring that $i$ and $i'$ are connected for all $i$.

For example, for $\tau=\Set{\Set{1,2},\Set{3,4,5}}$ we have
\begin{align*}
p_\tau=\tp{01,02,0,01,11,0,02,12,0,11,12, 0, 03,04,05,0,03,13,0,04,14,0,05,15,0,13,14,15}\, .
\end{align*}

Recalling that $x_\kappa=x_{1_n} \kappa=\kappa x_{1_n}$, this assertion is a consequence of the following claim.\newline
\emph{Claim.} Let $\tau>\omega$ be partitions of $n$. Then $x_{p_\omega} p_\tau=p_\tau x_{p_\omega}=0$.

We prove the claim by induction on the number of connected components $|\omega|$ of $\omega$. Note that $|\omega|\geq 2$ as $\tau$ is strictly coarser than $\omega$. If $|\omega|=2$, then $x_{p_\omega}=p_\omega-p_\tau$, since there is only one partition coarser than $\omega$ in this case. Now $p_\omega p_\tau=p_\tau p_\omega=p_\tau^2=p_\tau$, which implies the claim.

Now let $|\omega|\geq 3$ and assume the claim holds for all $\omega'$ with $|\omega'|<|\omega|$. By assumption, there exist $i,j$ which are connected in $\tau$ but not connected in $\omega$. It follows that
\begin{align*}
x_{p_\omega}=p_\omega-\sum_\mu x_\mu-\sum_{\nu}x_\nu,
\end{align*}
where $\mu>p_\omega$ such that $i,j$ are connected in $\mu$, and $\nu>p_\omega$ such that $i,j$ are not connected in $\nu$. It follows by induction hypothesis that $x_\nu p_\tau=0$ for all such $\nu$. This implies
\begin{align*}
x_{p_\omega}p_\tau=p_\omega p_\tau-\sum_\mu x_\mu p_\tau.
\end{align*}
However, $\mu_0:=p_\omega p_\tau$ appears among the $\mu$. All other $\mu$ are strictly coarser than $\mu_0$, and hence $x_{\mu_0}=\mu_0-\sum_{\mu\neq \mu_0}x_\mu$. Therefore,
\begin{align*}
x_{p_\omega}p_\tau=\mu_0-x_{\mu_0}p_\tau-\sum_{\mu>\mu_0} x_\mu p_\tau=\mu_0-\mu_0p_\tau+\sum_{\mu>\mu_0}x_\mu p_\tau-\sum_{\mu>\mu_0} x_\mu p_\tau=0,
\end{align*}
using that $p_\tau^2=p_\tau$. The claim follows after using a similar argument to show that $p_\tau x_{p_\omega}=0$.
\end{proof}

\begin{lemma}\label{decommute} The endomorphism $e_\rho\otimes 1$ and $d_1$ in $\End([n]^{\oplus k}\otimes [1])\cong \End([n+1]^{\oplus k})$ commute.
\end{lemma}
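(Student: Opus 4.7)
The plan is to reduce the desired commutation to commuting simpler ``building blocks'' with $d_1$. Since $d_1^{\oplus k} = d_1\otimes I_k$ acts trivially on the $M_k$-factor while $e_\rho\otimes 1_1 = \frac{1}{|Z|}\sum_{z\in Z}(x_z\otimes 1_1)\otimes \rho(z)$, the commutator becomes
\begin{align*}
(e_\rho\otimes 1_1)d_1^{\oplus k} - d_1^{\oplus k}(e_\rho\otimes 1_1) = \frac{1}{|Z|}\sum_{z\in Z}\bigl[(x_z\otimes 1_1)d_1 - d_1(x_z\otimes 1_1)\bigr]\otimes \rho(z),
\end{align*}
so it suffices to show $(x_z\otimes 1_1)$ commutes with $d_1$ in $\End([n+1])$ for each $z\in Z$. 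Writing $x_z = \pi_z x_{1_n} = x_{1_n}\pi_z$ (as in the proof of \Cref{e-idemp}), I would split the problem into showing that the permutation factor $\pi_z\otimes 1_1$ and the factor $x_{1_n}\otimes 1_1$ each commute with $d_1$.

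For the permutation factor, a direct diagrammatic check yields the key relation $(\pi_z\otimes 1_1)E^i_j = E^{z(i)}_{z(j)}(\pi_z\otimes 1_1)$. Using the decomposition $E^i_j = C_j(1_n\otimes\pi^*_*)C_i$ from \Cref{Eij-def} together with the easy identity $\pi_z C_i = C_{z(i)}\pi_z$ and the observation that $\pi_z\otimes 1_1$ commutes with $1_n\otimes\pi^*_*$ (they act on disjoint strands), one computes $\pi_z E^i_j = C_{z(j)}(1_n\otimes\pi^*_*)C_{z(i)}\pi_z = E^{z(i)}_{z(j)}\pi_z$. Since $z\in Z(\sigma)$ gives $z\sigma = \sigma z$, reindexing $i' = z(i)$ yields $\sum_i E^{z(i)}_{z(\sigma(i))} = \sum_{i'}E^{i'}_{\sigma(i')}$, and similarly $\sum_i E^{z(i)}_{z(i)} = \sum_{i'}E^{i'}_{i'}$. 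Hence $(\pi_z\otimes 1_1)d_1 = d_1(\pi_z\otimes 1_1)$.

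It therefore remains to show that $x_{1_n}\otimes 1_1$ commutes with every $E^i_j$. Decomposing again via $E^i_j = C_j(1_n\otimes\pi^*_*)C_i$, and noting that $x_{1_n}\otimes 1_1$ trivially commutes with $1_n\otimes\pi^*_*$ since the two act on disjoint strands, it suffices to show $(x_{1_n}\otimes 1_1)C_i = C_i(x_{1_n}\otimes 1_1)$ for each $1\leq i\leq n$. The main claim, proved by a direct partition-composition calculation, is that for any partition $\mu\in P_{n,n}$ with $\mu\geq 1_n$, one has $(\mu\otimes 1_1)C_i = C_i(\mu\otimes 1_1)$. Indeed, the hypothesis $\mu\geq 1_n$ forces the top vertex $i$ and the bottom vertex $i'$ to lie in the same part of $\mu$, and both compositions yield the partition of $\{1,\dots,n+1,1',\dots,(n+1)'\}$ obtained from $\mu$ by adjoining $n+1$ and $(n+1)'$ to that common part (with no loops removed, hence no spurious $t$-factors). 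Since $x_{1_n} = \sum_{\mu\geq 1_n}c_\mu\mu$ by M\"obius inversion on the poset of \Cref{ordering}, linearity gives $(x_{1_n}\otimes 1_1)C_i = C_i(x_{1_n}\otimes 1_1)$, as desired.

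The main obstacle is the last step: verifying the partition identity $(\mu\otimes 1_1)C_i = C_i(\mu\otimes 1_1)$ for $\mu\geq 1_n$. This requires a careful, case-by-case tracking of which middle vertices get contracted and of the resulting connected components, and crucially uses the coarsening condition $\mu\geq 1_n$---without it, the top and bottom copies of $i$ could land in different $\mu$-parts and the two sides would produce genuinely different partitions.
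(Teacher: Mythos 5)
Your proof is correct and follows essentially the same route as the paper: reduce to showing $x_z\otimes 1$ commutes with $d_1$, split $x_z=\pi_z x_{1_n}$, handle the permutation factor via $\pi_z C_i\pi_z\inv=C_{z(i)}$ and reindexing the sums using $z\sigma=\sigma z$, and handle the $x_{1_n}$ factor by its commutation with each $E^i_j$. The only difference is that you explicitly verify the latter commutation (via the partition identity $(\mu\otimes 1)C_i=C_i(\mu\otimes 1)$ for $\mu\geq 1_n$ and linearity), a fact the paper asserts without proof; your verification is correct.
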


\begin{proof} Since identity morphisms commute with arbitrary morphisms, it is enough to show that $x_z\otimes 1$ commutes with $\sum_i E^i_{\sigma(i)}$ and with $\sum_i E^i_i$ for any $z\in Z$. Since $x_z=x_{1_n} z$ and $E^i_j$ commutes with $x_{1_n}\otimes 1$ for arbitrary $1\leq i,j\leq n$, it is enough to show that $z\otimes 1$ commutes with each of the described sums.

Clearly, for any $g\in S_n$, the morphism $\pi_g\otimes 1$ commutes with the morphism $1_n\otimes\pi^*_*$. Now for any $i$,
\[ \pi_g C_i \pi_g\inv = C_{g(i)}
 \ ,
\]
since $C_i$ connects the $(n+1)$-th strand with the $i$-th strand, which is the $g(i)$-th strand, when pre-composed with the permutation morphism $g\inv$. Hence,
\[
 z \sum_i E^i_i z\inv = \sum_i E^{z(i)}_{z(i)} = \sum_i E^i_i
 \quad\textnormal{ and }\quad
 z \sum_i E^i_{\sigma(i)} z\inv = \sum_i E^{z(i)}_{z(\sigma(i))} = \sum_i E^i_{\sigma(i)}
\ ,
\]
since $z$ commutes with $\sigma$. This completes the proof.
\end{proof}

\begin{lemma} For $\sigma_1,\sigma_2\in S_n$, $d_1^{\sigma_2} d_1^{\sigma_1} (x_{1_n}\otimes 1) = d_1^{\sigma_2\sigma_1} (x_{1_n}\otimes 1)$.
\end{lemma}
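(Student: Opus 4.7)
I would begin by writing $d_1^\sigma = 1_{n+1} + F^\sigma$ with $F^\sigma := \sum_{i=1}^n(E^i_{\sigma(i)}-E^i_i)$, so that the identity to prove reduces to
\begin{equation*}
(F^{\sigma_1} + F^{\sigma_2} + F^{\sigma_2}F^{\sigma_1})(x_{1_n}\otimes 1) = F^{\sigma_2\sigma_1}(x_{1_n}\otimes 1).
\end{equation*}
The whole argument hinges on a single composition rule,
\begin{equation}\label{key-eq}
E^r_s E^p_q \, (x_{1_n}\otimes 1) = \delta_{r,q}\,E^p_s \, (x_{1_n}\otimes 1) \qquad (1 \le p,q,r,s \le n),
\end{equation}
which I would prove by direct partition-diagram computation, splitting on whether $r = q$.

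When $r = q$, stacking $E^p_q$ on top of $E^r_s = E^q_s$ and collapsing the middle layer (with intermediate labels $k^\circ$): the class $\{q, q^\circ, (n+1)^\circ\}$ of the top and the class $\{q^\circ, q', (n+1)^\circ\}$ of the bottom merge along $q^\circ$, after which the internal vertices $q^\circ$ and $(n+1)^\circ$ are deleted; similarly the $\{p, p^\circ, n+1\}$--$\{p^\circ, p'\}$ pair yields $\{p, p', n+1\}$ and the $\{s, s^\circ\}$--$\{s^\circ, s', (n+1)'\}$ pair yields $\{s, s', (n+1)'\}$. The remaining classes $\{k, k'\}$ for $k \notin \{p, s\}$ combine with these to give exactly $E^p_s$, and crucially no closed cycle is formed, so no factor of $t$ appears; thus $E^r_s E^p_q = E^p_s$. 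The degenerate sub-cases ($p = q$, $r = s$, or any coincidence among $p, q, s$) each reduce to a parallel short diagrammatic check.

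When $r \neq q$, the same stacking procedure produces a partition $\pi$ in which the middle vertex $(n+1)^\circ$ belongs to both a top class containing $q$ and a bottom class containing $r'$, forcing them to merge. After absorbing all internal vertices, the resulting component of $\pi$ contains both $\{q, q'\}$ and $\{r, r'\}$. Writing $\mu_0 \in \End([n])$ for the partition obtained from $1_n$ by merging strands $q$ and $r$, we may therefore factor $\pi = \lambda \, (\mu_0 \otimes 1)$ for a suitable $\lambda$. Since $\mu_0 > 1_n$, Lemma~\ref{lem-annihilation} (applied with the trivial group $Z = \{e\}$ and trivial $\rho$, for which $e_\rho = x_{1_n}$) yields $\mu_0 \, x_{1_n} = 0$, hence $\pi \, (x_{1_n}\otimes 1) = \lambda \bigl((\mu_0 \, x_{1_n}) \otimes 1\bigr) = 0$, establishing \eqref{key-eq} in this case.

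Granted \eqref{key-eq}, all four cross-products in $F^{\sigma_2}F^{\sigma_1}(x_{1_n}\otimes 1)$ collapse onto single sums over $i$; for instance $\sum_{i,j} E^j_{\sigma_2(j)} E^i_{\sigma_1(i)}(x_{1_n}\otimes 1)$ reduces to $\sum_i E^i_{\sigma_2\sigma_1(i)}(x_{1_n}\otimes 1)$, as only the term $j = \sigma_1(i)$ survives. Combining the four resulting sums with $(F^{\sigma_1} + F^{\sigma_2})(x_{1_n}\otimes 1)$ telescopes to $F^{\sigma_2\sigma_1}(x_{1_n}\otimes 1)$, which completes the proof. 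The main obstacle is the case analysis underlying \eqref{key-eq}: the various coincidence patterns of $p, q, r, s$ (in particular the diagonal factors $E^i_i$) each require a separate small diagrammatic check, though all follow the same template.
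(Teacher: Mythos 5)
Your proof is correct and follows essentially the same route as the paper: the paper also reduces everything to the composition rule $E^a_b E^i_j\,(x_{1_n}\otimes 1)=\delta_{j,a}E^i_b\,(x_{1_n}\otimes 1)$ (deduced from \Cref{lem-annihilation} when the indices force two distinct strands to be joined) and then expands $d_1^{\sigma_2}d_1^{\sigma_1}(x_{1_n}\otimes 1)$ so that the cross terms telescope to $d_1^{\sigma_2\sigma_1}(x_{1_n}\otimes 1)$. Your treatment of the surviving case $r=q$ by explicit diagram concatenation is just a more detailed version of what the paper leaves implicit.
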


\begin{proof} Note that by \Cref{lem-annihilation} we see
\begin{align}
E^a_{b}E^i_jx_{1_n}=\delta_{j,a}E^i_{l}.
\end{align}
This follows as if $j$ is not equal to $a$, then two different strands are connected, so the partition is annihilated by $x_{1_n}$. It hence follows that
\begin{align*}
d_1^{\sigma_2} d_1^{\sigma_1} (x_{1_n}\otimes 1)
&= \Big(1_{n+1}+ \sum_i \big(
E^i_{\sigma_1(i)} + E^i_{\sigma_2(i)} - 2E^i_i 
\\&~\phantom{=}~+
E^{\sigma_1(i)}_{\sigma_2\sigma_1(i)} E^i_{\sigma_1(i)}
- E^{\sigma_1(i)}_{\sigma_1(i)} E^i_{\sigma_1(i)}
+ E^i_i E^i_i
- E^i_{\sigma_2(i)} E^i_i
\big)\Big) (x_{1_n}\otimes 1)
\\
 &= \Big(1_{n+1} + \sum_i \big( E^i_{\sigma_2\sigma_1(i)} - E^i_i\big) \Big)(x_{1_n}\otimes 1)
 = d_1^{\sigma_2\sigma_1} (x_{1_n}\otimes 1)\ .&&\qedhere
 \end{align*}
\end{proof}

We can use the above Lemma to describe the inverse of $d_1^\rho$ explicitly.

\begin{lemma}\label{d1inverse}
The morphism $d_1^{\rho}=d_1^{\sigma,\rho}$ is invertible and its inverse is given by $d_1^{\sigma\inv,\rho}$.
\end{lemma}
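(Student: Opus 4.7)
The plan is to show that $d_1^{\sigma^{-1},\rho}\circ d_1^{\sigma,\rho}$ equals $e_\rho\otimes 1$, which is the identity of $([n]^{\oplus k},e_\rho)\otimes [1]$ in the idempotent completion, and then invoke the $\sigma\leftrightarrow\sigma^{-1}$ symmetry to conclude two-sidedness. Observe first that $Z(\sigma)=Z(\sigma^{-1})$, so the idempotents $e_{\sigma,\rho}$ and $e_{\sigma^{-1},\rho}$ coincide, making the expressions
\[
d_1^{\sigma,\rho}=(e_\rho\otimes 1)\,d_1^\sigma\,(e_\rho\otimes 1),
\qquad
d_1^{\sigma^{-1},\rho}=(e_\rho\otimes 1)\,d_1^{\sigma^{-1}}\,(e_\rho\otimes 1)
\]
share the same outer idempotent.

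The key computation proceeds in three steps. First, combining \Cref{e-idemp} (idempotency of $e_\rho$) with \Cref{decommute} (commutativity of $e_\rho\otimes 1$ with $d_1=d_1^\sigma$; the proof applies verbatim to $d_1^{\sigma^{-1}}$ since it uses only that $Z(\sigma)$ centralizes $\sigma$, hence also $\sigma^{-1}$), the product reduces to $d_1^{\sigma^{-1},\rho}\circ d_1^{\sigma,\rho}=d_1^{\sigma^{-1}} d_1^\sigma (e_\rho\otimes 1)$. Second, factor $e_\rho=e_\rho''\cdot(x_{1_n}\otimes I_k)$ by writing $x_z=z\,x_{1_n}$ and setting $e_\rho'':=|Z|^{-1}\sum_{z\in Z}z\otimes\rho(z)$; since $e_\rho''$ is a $\Bbbk$-linear combination of permutations that commute with both $\sigma$ and $\sigma^{-1}$, the argument of \Cref{decommute} again shows that $e_\rho''\otimes 1$ commutes with $d_1^{\sigma^{-1}} d_1^\sigma$. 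Third, the preceding lemma applied with $\sigma_2=\sigma^{-1}$, $\sigma_1=\sigma$ gives $d_1^{\sigma^{-1}} d_1^\sigma(x_{1_n}\otimes 1)=d_1^{\mathrm{id}}(x_{1_n}\otimes 1)=x_{1_n}\otimes 1$, since $d_1^{\mathrm{id}}=1_{n+1}$. Putting everything together, the product collapses to $(e_\rho''\otimes 1)(x_{1_n}\otimes I_k\otimes 1)=e_\rho\otimes 1$.

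The subtle point requiring care is that $e_\rho$ itself cannot simply be pulled through $d_1^{\sigma^{-1}}d_1^\sigma$ — the splitting $e_\rho = e_\rho''\cdot(x_{1_n}\otimes I_k)$ is precisely what is needed to separate the ``group part'' $e_\rho''$ (which commutes with the $d_1$'s) from the ``combinatorial part'' $x_{1_n}$ (which is the only ingredient that interacts nontrivially, via the preceding lemma). Once this decomposition is in place and the two commutation arguments are verified as direct consequences of \Cref{decommute}, the identity $d_1^{\sigma^{-1},\rho}\circ d_1^{\sigma,\rho}=e_\rho\otimes 1$ follows, and exchanging $\sigma$ with $\sigma^{-1}$ yields $d_1^{\sigma,\rho}\circ d_1^{\sigma^{-1},\rho}=e_\rho\otimes 1$ as well, completing the proof.
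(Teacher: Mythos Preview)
Your proof is correct and follows exactly the same approach as the paper: factor $e_\rho = e' \cdot (x_{1_n}\otimes I_k)$, commute the group part past the $d_1$'s, and apply the preceding lemma with $\sigma_2=\sigma^{-1}$, $\sigma_1=\sigma$. The paper's proof is a two-line sketch (``$e=e'x_{1_n}$ for some $e'$; the assertion follows from the previous lemma''), and you have simply spelled out the details it leaves implicit---though note that your remark that $e_\rho$ ``cannot simply be pulled through $d_1^{\sigma^{-1}}d_1^\sigma$'' is slightly misleading, since in fact $e_\rho\otimes 1$ does commute with this product by \Cref{decommute}; the point is rather that commuting alone is not enough, as one still needs the factor $x_{1_n}$ isolated to invoke the preceding lemma.
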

\begin{proof}
Write $e=e_\rho$. Recall from the proof of \Cref{e-idemp} that $e=e'x_{1_n}$ for some element $e'$. The assertion follows now from the previous lemma.
\end{proof}

By \Cref{d2}, we have the morphism $d_2^\rho\in  \End([n]^{\oplus k},e_\rho)\otimes [2])$ given by
\begin{align*}
d_2^\rho=(1_n^{\oplus k}\otimes\pi_X)(d_1^\rho\otimes 1)(1_n^{\oplus k}\otimes\pi_X)(d_1^\rho\otimes 1)
\end{align*}
To describe $d_2^\rho$ more explicitly, we define the partitions
\begin{align}\label{Eijab-def}
E^{i,a}_{j,b}=
\tikz[partition]{\scriptsize
\foreach \x in {1,...,7,9}  \makeDots{2*\x}{1};
\tpid{2,4,6,8,10,12}
\foreach \x in {1,...,5} \node at (2*\x+1,1.5){...};
\node[above] at (4,1){$i$};
\node[below] at (6,2){$j'$};
\node[above] at (8,1){$a$};
\node[below] at (10,2){$b'$};
\node[right] at (14,1){$n+1$};
\node[right] at (14,2){$(n+1)'$};
\node[right] at (18,1){$n+2$};
\node[right] at (18,2){$(n+2)'$};
\draw (14,1) to[bend left] (4,1);
\draw (14,2) to[bend right] (6,2);
\draw (18,1) to[bend left] (8,1);
\draw (18,2) to[bend right] (10,2);
}
\ ,
\end{align}
for indices $i,j,a,b\in\Set{1,\ldots,n}$ which are not necessarily distinct.

We also want to consider partitions where $n+1$ and $(n+1)'$ form a connected component, or where $n+2$ and $(n+2)'$ form a connected component, and we will use symbols of the form $E^{|,a}_{|,b}$ or $E^{i,|}_{j,|}$ for these partitions. That is, for instance,
\begin{align}\label{E--ab-def}
E^{|,a}_{|,b}=
\tikz[partition]{\scriptsize
\foreach \x in {1,...,5,7}  \makeDots{2*\x}{1};
\tpid{2,4,6,8}
\foreach \x in {1,...,3} \node at (2*\x+1,1.5){...};
\node[above] at (4,1){$a$};
\node[below] at (6,2){$b'$};
\node[right] at (10,1){$n+1$};
\node[right] at (10,2){$(n+1)'$};
\node[right] at (14,1){$n+2$};
\node[right] at (14,2){$(n+2)'$};
\draw (10,1) to (10,2);
\draw (14,1) to[bend left] (4,1);
\draw (14,2) to[bend right] (6,2);
}
\ .
\end{align}

\begin{lemma} \label{lem-helper-d2}
For all $1\leq i,j,k,l\leq n$,
\begin{gather}
E^{i,|}_{j,|} = E^i_j\otimes 1,
\label{Eeq1}
\\
E^{|,k}_{|,l}=(1_n\otimes\pi_X)(E^k_l\otimes 1)(1_n\otimes\pi_X),
\label{Eeq2}
\\
E^{i,k}_{j,l}=(1_n\otimes\pi_X)(E^k_l\otimes 1)(1_n\otimes\pi_X)(E^i_j\otimes 1).\label{Eeq3}
\end{gather}
\end{lemma}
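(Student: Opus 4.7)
The plan is to verify each of the three identities by a direct pictorial computation in the partition category $\cR^0_t$, tracking connected components of the partitions in $P_{n+2,n+2}$.

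For \eqref{Eeq1}, the claim is immediate from unraveling definitions: by the tensor product convention, $E^i_j\otimes 1$ is the partition on $n+2$ strands obtained from $E^i_j\in P_{n+1,n+1}$ by appending the identity column $\{n+2,(n+2)'\}$ at the far right. Comparing with \eqref{E--ab-def} identifies this partition with $E^{i,|}_{j,|}$.

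For \eqref{Eeq2}, the key observation is that $1_n\otimes \pi_X$ is self-inverse and acts as the transposition of strands $n+1$ and $n+2$. Conjugating any partition $\pi\in P_{n+2,n+2}$ by $1_n\otimes \pi_X$ thus has the effect of swapping the labels $n+1\leftrightarrow n+2$ and $(n+1)'\leftrightarrow(n+2)'$ within the connected components of $\pi$; this is a general fact about conjugation by a permutation morphism and can be checked directly from the concatenation rule. Applied to the three nontrivial components $\{k,k',n+1\}$, $\{l,l',(n+1)'\}$, $\{n+2,(n+2)'\}$ of $E^k_l\otimes 1$, this relabeling produces $\{k,k',n+2\}$, $\{l,l',(n+2)'\}$, $\{n+1,(n+1)'\}$, which are precisely the defining components of $E^{|,k}_{|,l}$; all remaining identity columns $\{p,p'\}$ are fixed by the relabeling.

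For \eqref{Eeq3}, I first apply \eqref{Eeq1} and \eqref{Eeq2} to rewrite the right-hand side as the composition $E^{|,k}_{|,l}\circ E^{i,|}_{j,|}$. The partition $E^{i,|}_{j,|}$ keeps the $(n+2)$-th strand as a pure identity column, while $E^{|,k}_{|,l}$ keeps the $(n+1)$-th strand as a pure identity column. Consequently, when the two diagrams are stacked, the arcs $\{i,i',n+1\}$ and $\{j,j',(n+1)'\}$ coming from $E^{i,|}_{j,|}$ and the arcs $\{k,k',n+2\}$ and $\{l,l',(n+2)'\}$ coming from $E^{|,k}_{|,l}$ pass through the middle row on disjoint strand-labels and survive into the composite partition; every other strand remains an identity column. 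This is precisely the partition $E^{i,k}_{j,l}$ from \eqref{Eijab-def}.

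The main obstacle is the combinatorial bookkeeping in the degenerate cases where the indices $i,j,k,l$ are not pairwise distinct (for instance, if $i=k$, then the components through $n+1$ and $n+2$ merge, and similarly if $j=l$ on the lower boundary). These cases, however, cause the same mergers on both sides of each identity, since the concatenation rule determines the resulting components uniformly regardless of multiplicities; hence the argument above goes through without modification.
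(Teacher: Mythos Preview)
Your proof is correct and follows essentially the same approach as the paper: both reduce \eqref{Eeq3} to the identity $E^{i,k}_{j,l}=E^{|,k}_{|,l}\,E^{i,|}_{j,|}$ after first establishing \eqref{Eeq1} and \eqref{Eeq2}. Your treatment is more explicit in tracking the components of the composite partition (including the degenerate cases), whereas the paper simply asserts the final equality.
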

\begin{proof} \Cref{Eeq1} follows from $\pi_X^2=1_2$. \Cref{Eeq2} follows from \Cref{Eeq1} by conjugating the first identity with $(1_n\otimes\pi_X)$. Finally, using the first two identities, we see that the left-hand side of \Cref{Eeq3} is just $E^{|,k}_{|,l} E^{i,|}_{j,|}$, which equals the right-hand side.
\end{proof}

This lemma is used to derive the following expression of $d_2^\rho$ in terms of the morphism
$d_2$ in $\End([n]\otimes [2])$ defined by
\begin{align}
d_2
 &= 1_{n+2}
 +\! \sum_{i=1}^n\! \big(E^{i,|}_{\sigma(i),|} + E^{|,i}_{|,\sigma(i)} - E^{i,|}_{i,|} - E^{|,i}_{|,i}\big)
 +\!\sum_{i,j=1}^{n}\! \big(E^{i,j}_{\sigma(i),\sigma(j)}+E^{i,j}_{i,j} - E^{i,j}_{\sigma(i),j} - \!E^{i,j}_{i,\sigma(j)}\big).
\end{align}

\begin{lemma}\label{restrictiond2}
The morphism $d_2^{\oplus k}$ commutes with $e_\rho\otimes 1_2$ and $d^\rho_2=d_2(e_\rho\otimes 1_2)$.
\end{lemma}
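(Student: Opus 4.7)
The strategy is to reduce everything to Lemma \ref{decommute}, combined with the fact that $e_\rho$ and $\pi_X$ act on disjoint tensor factors. Throughout, I identify $[n+2]\cong[n]\otimes[2]\cong([n]\otimes[1])\otimes[1]$ as usual, and write $E:=e_\rho\otimes 1_2$, $P:=1_n^{\oplus k}\otimes\pi_X$, and $D_1:=d_1^{\oplus k}\otimes 1_{[1]}$, all regarded as endomorphisms of $[n]^{\oplus k}\otimes[2]$. With this notation, $d_2^{\oplus k}=PD_1PD_1$.

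First I would establish two elementary commutation relations. Since $e_\rho$ acts on the first $n$ strands while $\pi_X$ acts on the last two, $E$ and $P$ commute by the interchange law for tensor products. For the second, Lemma \ref{decommute} asserts that $e_\rho\otimes 1_{[1]}$ commutes with $d_1^{\oplus k}$ in $\End([n+1]^{\oplus k})$; tensoring this identity on the right with $\ide_{[1]}$ shows that $E$ commutes with $D_1$. From these two facts the first claim is immediate: $E$ commutes with each factor of $PD_1PD_1=d_2^{\oplus k}$, and hence with $d_2^{\oplus k}$ itself.

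For the second claim, the key observation is that $d_1^\rho=(e_\rho\otimes 1)\,d_1^{\oplus k}\,(e_\rho\otimes 1) = (e_\rho\otimes 1)\,d_1^{\oplus k}$, using Lemma \ref{decommute} together with the idempotency of $e_\rho$ (Lemma \ref{e-idemp}). Tensoring with $\ide_{[1]}$ gives $d_1^\rho\otimes 1 = E\,D_1$. Substituting into the defining formula yields
\begin{align*}
d_2^\rho \;=\; P\,(d_1^\rho\otimes 1)\,P\,(d_1^\rho\otimes 1)
     \;=\; P\,E\,D_1\,P\,E\,D_1.
\end{align*}
Now I push each occurrence of $E$ to the left: $PE=EP$ by the first commutation relation, and $D_1E=ED_1$ by the second. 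After commuting everything past, the two idempotents $E$ collapse via $E^2=E$, giving $d_2^\rho = E\,PD_1PD_1 = (e_\rho\otimes 1_2)\,d_2^{\oplus k}$, which, by the first part of the lemma, equals $d_2^{\oplus k}(e_\rho\otimes 1_2)$, as desired.

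There is no real obstacle in this proof: the entire argument is formal manipulation of the commutation relations just established, and the only thing requiring care is bookkeeping of the tensor-factor identifications, in particular that the two applications of Lemma \ref{decommute} (for $D_1$) and the tensor-product disjointness (for $P$) together suffice to freely move $E$ through any composite built from $D_1$ and $P$.
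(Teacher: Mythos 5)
Your commutation bookkeeping is correct, but the argument hinges on the identification $d_2^{\oplus k}=PD_1PD_1$, which you treat as the definition of $d_2$. In the paper it is not: in this section $d_2$ denotes the explicit sum of partition diagrams
\begin{equation*}
d_2 \;=\; 1_{n+2}+\sum_{i}\big(E^{i,|}_{\sigma(i),|}+E^{|,i}_{|,\sigma(i)}-E^{i,|}_{i,|}-E^{|,i}_{|,i}\big)
+\sum_{i,j}\big(E^{i,j}_{\sigma(i),\sigma(j)}+E^{i,j}_{i,j}-E^{i,j}_{\sigma(i),j}-E^{i,j}_{i,\sigma(j)}\big),
\end{equation*}
displayed just before the lemma, whereas $d_2^\rho$ is defined as the composite $(1_n^{\oplus k}\otimes\pi_X)(d_1^\rho\otimes 1)(1_n^{\oplus k}\otimes\pi_X)(d_1^\rho\otimes 1)$. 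The whole purpose of the lemma is to relate that composite to the partition expression, which is what is actually used later in the proof of \Cref{halfbraiding-prop} when checking naturality with respect to $\pi_H$. So the equality $(1_n\otimes\pi_X)(d_1\otimes 1)(1_n\otimes\pi_X)(d_1\otimes 1)=d_2$ is a claim that must be proved, not a definition; the paper proves it (introducing the auxiliary morphism $\tilde{d}_2$ for the composite) by expanding the product and invoking the three identities of \Cref{lem-helper-d2}, namely $E^{i,|}_{j,|}=E^i_j\otimes 1$, $E^{|,k}_{|,l}=(1_n\otimes\pi_X)(E^k_l\otimes 1)(1_n\otimes\pi_X)$, and $E^{|,k}_{|,l}E^{i,|}_{j,|}=E^{i,k}_{j,l}$. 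Your write-up never uses \Cref{lem-helper-d2} and never touches the partition formula, so as it stands it establishes commutation statements about $PD_1PD_1$, not the lemma as stated — and note that both halves of the statement (the commutation of $d_2^{\oplus k}$ with $e_\rho\otimes 1_2$, and the formula $d_2^\rho=d_2(e_\rho\otimes 1_2)$) refer to the partition-sum $d_2$, so both depend on this identification.

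Once that step is inserted, the rest of your argument is correct and coincides with the paper's: $e_\rho\otimes 1_2$ commutes with $1_n^{\oplus k}\otimes\pi_X$ because they act on disjoint tensor factors, and with $d_1^{\oplus k}\otimes 1$ by \Cref{decommute}, hence with the composite; and writing $d_1^\rho\otimes 1=(e_\rho\otimes 1_2)(d_1^{\oplus k}\otimes 1)$ (using \Cref{decommute} and \Cref{e-idemp}) and collapsing the two idempotents yields $d_2^\rho=d_2^{\oplus k}(e_\rho\otimes 1_2)$. The fix is therefore a single additional step: cite \Cref{lem-helper-d2} and expand the product to identify the composite with the displayed partition sum, exactly as the paper does via $\tilde{d}_2=d_2$.
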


\begin{proof} Define
$$
\tilde{d}_2:=(1_n\otimes\pi_X)(d_1\otimes 1)(1_n\otimes\pi_X)(d_1\otimes 1).
$$
The morphism $\tilde{d}_2^{\oplus k}$ commutes with $e_\rho\otimes 1_2$, since $d_1$ commutes with $e_\rho\otimes 1$. The claim follows since by \Cref{lem-helper-d2}, $\tilde{d}_2=d_2$.
\end{proof}

\begin{lemma}\label{lem-annihilation-EE}The equality
\begin{equation}
\big(E^{i,j}_{a,b}\big)^{\oplus k} (e_\rho\otimes\pi_H)
= 0
= (1_n\otimes\pi_H) \big(E^{a,b}_{i,j}\big)^{\oplus k} (e_\rho\otimes 1_2)
\end{equation}
holds whenever $1\leq i,j,a,b\leq n$ and $i\neq j$.
\end{lemma}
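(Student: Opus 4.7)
The plan is to reduce the claim to Lemma \ref{lem-annihilation} by means of an auxiliary ``merging'' partition on $[n,n]$ together with a commutation identity that trades $\pi_H$ on the last two strands for this merging partition on the first $n$ strands.

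I introduce $\lambda_{i,j}\in\End([n])$, the partition of $[n,n]$ whose only non-trivial connected component is $\{i,j,i',j'\}$, with $\{k,k'\}$ a separate component for every other $k$. Since $i\neq j$, the partition $\lambda_{i,j}$ is strictly coarser than $1_n=\pi_{e}$, so Lemma \ref{lem-annihilation} yields $\lambda_{i,j}^{\oplus k}e_\rho=0$ in $\End([n]^{\oplus k})$. The core combinatorial step will be the partition identity
\[
E^{i,j}_{a,b}(1_n\otimes \pi_H)\;=\;(\lambda_{i,j}\otimes 1_2)\,E^{i,j}_{a,b}\;=\;E^{i,j}_{a,b}(\lambda_{i,j}\otimes 1_2),
\]
which I would verify by stacking the partition diagrams and tracing components through the middle row. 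In each of the three composites one obtains the same partition $P\in P_{n+2,n+2}$ whose only enlarged component is $\{\text{top-}i,\text{top-}j,\text{top-}(n+1),\text{top-}(n+2),\text{bot-}i',\text{bot-}j'\}$: on the left this merger is induced by $\pi_H$ fusing strands $n+1,n+2$ and the extensions of $E^{i,j}_{a,b}$ pulling in top-$i$ and top-$j$, while on the right it is induced directly by $\lambda_{i,j}$; all other components (those involving $a,b$ and the trivial $\{k,k'\}$) coincide.

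Granting this identity, the first equality follows by a short manipulation using interchange, the $\oplus k$-diagonal form of $E^{i,j}_{a,b}$ and $\lambda_{i,j}\otimes 1_2$, and Lemma \ref{lem-annihilation}:
\begin{align*}
(E^{i,j}_{a,b})^{\oplus k}(e_\rho\otimes\pi_H)
&=(E^{i,j}_{a,b})^{\oplus k}(1_n^{\oplus k}\otimes\pi_H)(e_\rho\otimes 1_2)\\
&=(\lambda_{i,j}^{\oplus k}\otimes 1_2)(E^{i,j}_{a,b})^{\oplus k}(e_\rho\otimes 1_2)\\
&=(E^{i,j}_{a,b})^{\oplus k}(\lambda_{i,j}^{\oplus k}\otimes 1_2)(e_\rho\otimes 1_2)\\
&=(E^{i,j}_{a,b})^{\oplus k}\bigl((\lambda_{i,j}^{\oplus k}e_\rho)\otimes 1_2\bigr)=0.
\end{align*}
For the second equality, I would establish the mirror identity $(1_n\otimes\pi_H)E^{a,b}_{i,j}=E^{a,b}_{i,j}(\lambda_{i,j}\otimes 1_2)$ by the analogous partition computation (this time the enlarged component of the composite lives on the bottom row: $\{\text{top-}i,\text{top-}j,\text{bot-}i',\text{bot-}j',\text{bot-}(n+1)',\text{bot-}(n+2)'\}$) and then conclude by the same chain of equalities.

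The main obstacle is the combinatorial bookkeeping in verifying the partition identity, especially handling coincidences among the four indices $i,j,a,b$ (for instance $a=i$ or $b=j$), where further mergings appear in both composites but in a matching way; everything else is then a formal consequence of Lemma \ref{lem-annihilation}.
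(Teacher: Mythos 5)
Your proposal is correct and takes essentially the same route as the paper: both arguments rest on the observation that composing with $\pi_H$ connects the strands $i\neq j$ among the first $n$ points, so that \Cref{lem-annihilation} applies. Your explicit factorization of the composite through the merging partition $\lambda_{i,j}\otimes 1_2$ (together with the commutation identities) simply makes precise what the paper states more briefly, namely that the restriction of the composite partition to $\{1,\ldots,n,1',\ldots,n'\}$ is coarser than $1_n$.
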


\begin{proof} The restriction of $E^{i,j}_{a,b} (1_n\otimes\pi_H)$ (or $(1_n\otimes\pi_H) E^{a,b}_{i,j}$) to the set $\Set{1,\ldots, n,1',\ldots, n'}$ is a partition coarser than $1_n$ as $i\neq j$ are connected. Hence, pre-composition with $e_\rho\otimes 1_2$ (or post-composition with $1_2\otimes e_\rho$) gives zero by \Cref{lem-annihilation}.
\end{proof}

\begin{proof}[Proof of \Cref{halfbraiding-prop}]
We check that the conditions described in \Cref{centerconditions} hold for $d_1^\rho$ and $d_2^\rho$. It suffices to consider the case $k=1$ with trivial $\rho(z)=1$ for all $z\in Z$. Hence, we write $e=e_\rho$, and have $d_1^{\oplus k}=d_1$, $d_2^{\oplus k}=d_2$.
We note that $d_2$ is defined in terms of $d_1$ as required in \Cref{d2}. It remains to verify the four conditions in Equations (\ref{centercond1})--(\ref{centercond2}).

First of all, we check naturality with respect to $\pi_*$, $\pi^*$. For this, note that $E^i_{j} (1_n\otimes \pi_*) = E^k_j(1_n\otimes \pi_*)$ for all $i,j,k$. Therefore,
\[ d_1 (1_n \otimes \pi_*)
 = 1_n \otimes \pi_* + \sum_{i=1}^n \big(E^i_{\sigma(i)}-E^i_i\big) (1_n\otimes\pi^*_*)
 = 1_n \otimes \pi_*
\]
and similarly, $(1_n\otimes\pi^*) d_1 = 1_n \otimes \pi^*$. Hence, \Cref{centercond1} follows using \Cref{Eij-def}.

To check naturality with $d_2$ we compute
\begin{align*}
&(1_n\otimes \pi_X)d_2(1_n\otimes \pi_X)=(1_n\otimes\pi_X)
\Big(
  1_{n+2}
  + \sum_i \big(E^{i,|}_{\sigma(i),|} + E^{|,i}_{|,\sigma(i)} - E^{i,|}_{i,|} - E^{|,i}_{|,i}\big) \\
&\phantom{=}~  +\sum_{i,j} \big(E^{i,j}_{\sigma(i),\sigma(j)}+E^{i,j}_{i,j} - E^{i,j}_{\sigma(i),j} - E^{i,j}_{i,\sigma(j)}\big)
\Big)
(1_n\otimes \pi_X)
\\
&=
  (1_n\otimes\pi_X)\Big(\!1_{n+2}
  +\! \sum_i \big(E^{|,i}_{|,\sigma(i)} + E^{i,|}_{\sigma(i),|} \!- E^{|,i}_{|,i} - E^{i,|}_{i,|}\big)
 +\!\sum_{i,j} \big(E^{j,i}_{\sigma(j),\sigma(i)}\!+E^{j,i}_{j,i} - E^{j,i}_{\sigma(j),i} \!- E^{j,i}_{j,\sigma(i)}\big)\!\Big)
\\&=
  \Big(1_{n+2}
  + \sum_i \big(E^{i,|}_{\sigma(i),|} + E^{|,i}_{|,\sigma(i)} - E^{i,|}_{i,|} - E^{|,i}_{|,i}\big)
  +\sum_{i,j} \big(E^{i,j}_{\sigma(i),\sigma(j)}+E^{i,j}_{i,j} - E^{i,j}_{\sigma(i),j}\! - E^{i,j}_{i,\sigma(j)}\big)\!\Big)
  =d_2,
\end{align*}
using reordering of indices in the last step. Hence  $d_2 (e\otimes\pi_X) = (e\otimes\pi_X) d_2$ follows using  \Cref{decommute}.

Finally, to check naturality of $d_2^\rho$ with respect to $\pi_H$, we may verify that
$ [1_n\otimes\pi_H,d_2](e\otimes 1_2)=0,$
where $[\cdot,\cdot]$ denotes the commutator using the composition.

We observe that $1_n\otimes\pi_H$ commutes with partitions of the form $E^{|,i}_{|,i}, E^{i,|}_{i,|}$ or $E^{i,j}_{i,j}$ directly. Out of the remaining terms of the form $E^{i,j}_{a,b}$ in $d_2$, we can neglect those for which $i\neq j$, if $\pi_H$ is applied from the top, and those for which $a\neq b$, if $\pi_H$ is applied from the bottom, according to \Cref{lem-annihilation-EE}. Finally, for all $i,j$,
\begin{align*}(1_n\otimes\pi_H)E^{i,|}_{j,|}=E^{i,|}_{j,|}(1_n\otimes\pi_H)=E^{i,i}_{j,i}, && (1_n\otimes\pi_H)E^{|,i}_{|,j}=E^{|,i}_{|,j}(1_n\otimes\pi_H)=E^{i,i}_{j,i}.\end{align*}
Hence, we compute
\begin{align*}
&[1_n\otimes\pi_H,d_2](e \otimes 1_2)=  \bigg(\Big[ 1_n\otimes\pi_H,
 1_{n+2}+\sum_i \big(E^{i,|}_{\sigma(i),|} + E^{|,i}_{|,\sigma(i)} - E^{i,|}_{i,|} - E^{|,i}_{|,i}\big)\Big]\\&
  \phantom{=}~+ \Big[1_n\otimes\pi_H, \sum_{i,j} \big(E^{i,j}_{\sigma(i),\sigma(j)}+E^{i,j}_{i,j} - E^{i,j}_{\sigma(i),j} - E^{i,j}_{i,\sigma(j)}\big)
 \Big]\bigg)(e\otimes 1_2)
\\
 &= \bigg[1_n\otimes\pi_H,
 \sum_i \Big(E^{i,|}_{\sigma(i),|} + E^{|,i}_{|,\sigma(i)}
  +\sum_{i,j} \big(E^{i,j}_{\sigma(i),\sigma(j)}- E^{i,j}_{\sigma(i),j} - E^{i,j}_{i,\sigma(j)}\big)\Big)
 \bigg](e\otimes 1_2)
 \\
&=
\Big(
 \sum_i \big(E^{i,i}_{\sigma(i),i} + E^{i,i}_{i,\sigma(i)}
  +E^{i,i}_{\sigma(i),\sigma(i)}- E^{i,i}_{\sigma(i),j} - E^{i,i}_{i,\sigma(i)}\big)
\\
&\phantom{=}~-
 \sum_i \big(E^{i,\sigma(i)}_{\sigma(i),\sigma(i)} + E^{\sigma(i),i}_{\sigma(i),\sigma(i)}
  + E^{i,i}_{\sigma(i),\sigma(i)}- E^{i,\sigma(i)}_{\sigma(i),\sigma(i)} - E^{\sigma(j),j}_{\sigma(j),\sigma(j)}\big)
\Big)(e\otimes 1_2)
 = 0
\ .
\end{align*}
This completes the proof.
\end{proof}

\begin{example} We choose $n=2$, $\mu=(2)$, $\sigma=(12)\in S_2$, $Z=\Set{1,(12)}$, $k=1$, and the trivial $Z$-character $\rho\colon \Bbbk Z\to M_1(\Bbbk)$ with $\rho((12))=1$. Then $e = (1_2+\pi_X-2\pi_H)/2$ and
\[
d_1^\rho  =\left(
\tp{1,11,0,2,12,0,3,13,0}
+\tp{11,1,3,0,2,12,13}
+\tp{1,11,13,0,12,2,3}
-\tp{11,1,3,13,11,0,2,12}
-\tp{1,11,0,12,2,3,13,12}\right)(e\otimes 1)
\
\]
defines the object $\uW{(12),\triv}$ in $\cZ(\cR_t)$ for any $t$. We may, for example, also use
$\tilde{e}=(1_2-\pi_X)/2$ which corresponds to the choice the sign $Z$-character, $\rho(12)=-1$, and the same morphism $d_1$ determining the half-braiding. This gives the object $\uW{(12),\mathrm{sign}}$ in $\cD_t$.
\end{example}

\begin{example}
Choose $n=3$, $\mu=(3)$, $\sigma=(123)\in S_3$. Then $Z=\Set{1,\sigma,\sigma^2}$, and for $a=0,1,2$ we may consider the irreducible $Z$-character such that $\rho_a(123)=e^{2a\pi i/3}$. Then for $\uW{(123),\rho_a}$,
\begin{align*}
    e_1^{\rho_a}=&\frac{1}{3}\left(\tp{1,11,0,2,12,0,3,13,0}-\tp{1,11,1,3,13,11,0,2,12,0}-\tp{1,11,1,2,12,11,0,3,13,0}-\tp{1,11,0,2,12,2,3,13,12,0}+2\,\tp{1,11,12,2,1,0,2,3,13,12,0}\right.\\
    &+e^{\frac{2a\pi i}{3}}\left(
    \tp{1,12,0,2,13,0,3,11,0}-\tp{1,12,13,2,1,0,3,11,0}-\tp{1,11,12,3,1,0,2,13,0}-\tp{1,12,0,2,3,13,11,2,0}+2\,\tp{1,11,12,2,1,0,2,3,13,12,0}
    \right)\\
    &\left.+e^{-\frac{2a\pi i}{3}}\left(
    \tp{1,13,0,2,11,0,3,12,0}-\tp{1,2,13,11,1,0,3,12,0}-\tp{1,13,0,2,3,12,11,2,0}-\tp{1,3,13,12,1,0,2,11,0}+2\,\tp{1,11,12,2,1,0,2,3,13,12,0}\right)\right)\, ,\\
    d_1^{\rho_a}=&\left(\tp{1,11,0,2,12,0,3,13,0,4,14,0}
+\tp{11,1,4,0,2,12,14,0,3,13}
+\tp{1,11,0,12,2,4,0,3,13,14,0}
+\tp{1,11,14,0,2,12,0,13,3,4,0}\right.\\
&\left.-\tp{11,1,4,14,11,0,2,12,0,3,13,0}
-\tp{1,11,0,12,2,4,14,12, 0,3,13,0}
-\tp{1,11,0,12,2,0,3,13,14,4,3,0}\right)(e_1^{\rho_a}\otimes 1)\, .
\end{align*}
\end{example}

We consider a pair of conjugate permutations, $\sigma_2 = \tau \sigma_1 \tau\inv$ for $\sigma_1,\sigma_2,\tau\in S_n$. Then the centralizer groups $Z(\sigma_1)$ and $Z(\sigma_2)$ are isomorphic, and in fact, $\tau(\cdot)\tau\inv$ is a group isomorphism from $Z(\sigma_1)$ to $Z(\sigma_2)$. Assume $\rho_i\colon Z(\sigma_i)\to M_k(\Bbbk)$ for $i=1,2$ are two maps satisfying \Cref{eq-rho}. We say that $(\sigma_1,\rho_1)$ and $(\sigma_2,\rho_2)$ are \emph{conjugate} if, in addition to $\tau$, there is an invertible matrix $A\in M_n(\Bbbk)^\times$ such that
\[ \rho_2( \tau z \tau\inv) = A \rho_1(z) A\inv
\qquad\text{ for all } z\in Z(\sigma_1).
\]

\begin{proposition}\label{equivalence-conj}
If $(\sigma_1,\rho_1)$ and $(\sigma_2,\rho_2)$ are conjugate, then the interpolation objects $\uW{\sigma_1,\rho_1}$ and $\uW{\sigma_2,\rho_2}$ in $\cZ(\uRep(S_t))$ are isomorphic.
\end{proposition}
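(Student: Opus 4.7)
The plan is to construct an explicit isomorphism $\phi\colon \uW{\sigma_1,\rho_1}\isomorph \uW{\sigma_2,\rho_2}$ in $\cZ(\cR_t)$ built from the conjugating data $\tau$ and $A$. The natural candidate is
\[
 \phi := e_{\sigma_2,\rho_2}\,(x_\tau\otimes A) \in \End([n]^{\oplus k}),
\]
where $x_\tau\otimes A$ denotes the matrix of morphisms with $(p,q)$-entry $A_{pq}\,x_\tau$ in the notation of \Cref{matrix-notation}. The first step is to establish the alternative expression $\phi = (x_\tau\otimes A)\,e_{\sigma_1,\rho_1}$, which shows that $\phi$ factors through both idempotents and hence defines a morphism $([n]^{\oplus k},e_{\sigma_1,\rho_1})\to ([n]^{\oplus k},e_{\sigma_2,\rho_2})$ in $\cR_t$. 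This follows from a reindexing $z = \tau z'\tau\inv$ in the sum defining $e_{\sigma_2,\rho_2}$, combined with $Z(\sigma_2) = \tau Z(\sigma_1)\tau\inv$, the relation $\rho_2(\tau z'\tau\inv) = A\rho_1(z')A\inv$, and the identity $x_{z}x_\tau = x_{z\tau}$ from the proof of \Cref{e-idemp}.

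Invertibility is then established by exhibiting $\psi := (x_{\tau\inv}\otimes A\inv)\,e_{\sigma_2,\rho_2}$: using $x_\tau x_{\tau\inv} = x_{1_n}$ and $(x_{1_n}\otimes I_k)e_{\sigma_i,\rho_i} = e_{\sigma_i,\rho_i}$ (the latter from $x_z = x_{1_n}z$ and $x_{1_n}^2 = x_{1_n}$), a direct computation yields $\phi\psi = e_{\sigma_2,\rho_2}$ and $\psi\phi = e_{\sigma_1,\rho_1}$, i.e., the identity endomorphisms in the idempotent completion.

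The main step is to verify that $\phi$ intertwines the half-braidings. By Equations (\ref{inducedc})--(\ref{d1}) and naturality, it suffices to check the equality at $[1]$, namely $d_1^{\sigma_2,\rho_2}(\phi\otimes 1_1) = (\phi\otimes 1_1)\,d_1^{\sigma_1,\rho_1}$. Using \Cref{decommute} to absorb the idempotents, together with the observation that in the matrix picture the factor $A$ commutes with $d_1^{\sigma}\otimes I_k$, this reduces to the single scalar identity
\[
 d_1^{\sigma_2}(x_\tau\otimes 1_1) = (x_\tau\otimes 1_1)\,d_1^{\sigma_1} \qquad\text{in } \End([n+1]).
\]
Both ingredients needed for this identity are already present in the proof of \Cref{decommute}: the conjugation rule $(\pi_\tau\otimes 1_1)\,E^i_j\,(\pi_{\tau\inv}\otimes 1_1) = E^{\tau(i)}_{\tau(j)}$ combined with $\sigma_2 = \tau\sigma_1\tau\inv$ and a reindexing of the sum in \Cref{d1dash} yield $(\pi_\tau\otimes 1_1)\,d_1^{\sigma_1} = d_1^{\sigma_2}(\pi_\tau\otimes 1_1)$; and the commutation of $(x_{1_n}\otimes 1_1)$ with each $E^i_j$ gives $(x_{1_n}\otimes 1_1)\,d_1^\sigma = d_1^\sigma(x_{1_n}\otimes 1_1)$. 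Splitting $x_\tau = \pi_\tau x_{1_n} = x_{1_n}\pi_\tau$ (since $x_{1_n}$ commutes with permutations) and chaining these two identities produces the required commutation.

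The main obstacle is the bookkeeping that reduces the matrix-level intertwining condition to the scalar identity in $\End([n+1])$: the idempotents $e_{\sigma_i,\rho_i}$ must be absorbed on the correct sides of $\phi$, and the $M_k(\Bbbk)$-valued factor $A$ must be seen to pass cleanly through $d_1^{\sigma_i}\otimes I_k$. Once that reduction is carried out, the argument is complete by the two identities discussed above.
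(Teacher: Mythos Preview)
Your proof is correct and follows essentially the same approach as the paper's: both construct the isomorphism from $\tau\otimes A$ (your $x_\tau\otimes A$ agrees with the paper's $\pi_\tau\otimes A$ after absorbing the idempotent, since $e_{\rho_i}(x_{1_n}\otimes I_k)=e_{\rho_i}$), verify invertibility via the analogous map with $\tau\inv\otimes A\inv$, and check compatibility with the half-braidings by reducing to the conjugation identity $(\pi_\tau\otimes 1)E^i_j=E^{\tau(i)}_{\tau(j)}(\pi_\tau\otimes 1)$ together with a reindexing of the sum defining $d_1^\sigma$. Your write-up makes the reduction to the scalar identity in $\End([n+1])$ a bit more explicit, but the substance is the same.
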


\begin{proof}

We consider the morphisms
\[
\phi := e_{\rho_2}(\tau\otimes_\Bbbk A)e_{\rho_1}
\in\Hom( ([n]^{\oplus k},e_{\rho_1}), ([n]^{\oplus k},e_{\rho_2}) ),
\]
where $\otimes_\Bbbk$ denotes the $\Bbbk$-linear tensor product, not the internal tensor product of our monoidal category.

The pairs $(\sigma_1,\rho_1)$ and $(s_2,\rho_2)$ being conjugate implies that $\phi$ is invertible with the inverse morphism $e_{\rho_1}(\tau\inv\otimes A\inv)e_{\rho_2}$.

Also, we can see that $\phi$ is compatible with the half-braidings defined by $d_1^{\sigma_1,\rho_1}$ and $d_1^{\sigma_2,\rho_2}$, respectively: we recall from the proof of \Cref{decommute} that $g E^i_j=E^{g(i)}_{g(j)} g$ for all $g\in S_n$ and $1\leq i,j\leq n$, so
\begin{align*}
(\phi\otimes 1) d^{\sigma_1,\rho_1}
 &= e_{\rho_2}
 \big((\tau\otimes_\Bbbk A) \otimes 1\big)
  \Big(1_{n+1} + \sum_i \big(E^i_{\sigma_1(i)} - E^i_i\big)\otimes_\Bbbk I_n\Big) e_{\rho_1}
 \\
 &= e_{\rho_2}
  \Big(1_{n+1} + \sum_i \big(E^{g(i)}_{g\sigma_2(i)} - E^{g(i)}_{g(i)}\big)\otimes_\Bbbk I_n\Big)
  \big((\tau\otimes_\Bbbk A) \otimes 1\big)
  e_{\rho_1}
  \\
 &= e_{\rho_2}
  \Big(1_{n+1} + \sum_i\big( E^i_{\sigma_2(i)} - E^i_i\big)\otimes_\Bbbk I_n\Big)
  \big((\tau\otimes_\Bbbk A) \otimes 1\big)
  e_{\rho_1}
\\
 &= d^{\sigma_2,\rho_2} (\phi\otimes 1)&&\qedhere
\end{align*}
\end{proof}

\begin{remark}\label{uWrho}
By \Cref{equivalence-conj}, the interpolation object $\uW{\sigma,\rho}$ only depends on the cycle type $\mu$ of $\sigma$, and we may use the notation $\uW{\mu,\rho}$. As a special case, if $\rho$ corresponds to a $Z(\sigma)$-representation, then isomorphic representations yield isomorphic interpolation objects, just as in the classical situation of $S_n$-crossed modules (see \Cref{Sncenter}).
\end{remark}

\subsection{The Ribbon Category \texorpdfstring{$\cD_t$}{Dt}}\label{Dtribbonsect}

We now consider the monoidal subcategory $\cD_t$ of $\cZ(\cR_t)$ generated under tensor products and direct sums by the interpolation objects studied in the previous section. We show that $\cD_t$ is a ribbon category and, as we will see in \Cref{functorcenter}, this category can be viewed as an interpolation category for $\cZ(\Rep(S_n))\simeq \Rep (\Drin S_n)$.

\begin{definition}
Let $\cD_t^0$ denote the full subcategory of $\cZ(\cR_t)$ generated under tensor products by all objects of the form $\uW{\sigma,\rho}$ as in \Cref{intobject-def}. Moreover, denote by  $\cD_t^1$ the closure of $\cD_t^0$ under finite direct sums (biproducts), and $\cD_t$ the idempotent completion of $\cD_t^1$.
\end{definition}

By construction, $\cD_t$ is a monoidal category closed under direct sums. We show that $\cD_t$ is also closed under duals.

\begin{remark} Note that, by construction, for any map $\rho\colon\Bbbk Z\to M_k(\Bbbk)$ as in \Cref{eq-rho}, the object $\uW{\sigma,\rho}$ is a subobject of $\uW{\sigma,\rho_k}$ for the map $\rho_k \colon\Bbbk Z\to M_k(\Bbbk)$ which is the $k\times k$-identity matrix at any element in $Z$. This follows from observing $\rho$ and $\rho_k$ only affect the idempotent of $[n]^{\oplus k}$, whose image is the underlying object of $\uW{\sigma,\rho}$ and $\uW{\sigma,\rho_k}$, respectively, but not the half-braiding. Hence, $\cD_t$, is, in fact, already generated by objects of the form $\uW{\sigma,\rho_k}$.
\end{remark}

\begin{lemma}\label{duallemma}
Given an interpolation object $\uW{\sigma,\rho}$ in $\cD_t^0$, 
the dual object in $\cZ(\cR_t)$ is given by $\uW{\sigma,\rho^*}$, where 
\begin{align} \rho^*(z)=\rho(z^{-1})^t, &&\forall z\in Z.
\end{align}
Here, $(\cdot)^t$ denotes the usual transpose of a matrix.
\end{lemma}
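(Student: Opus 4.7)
The strategy is to verify separately the two defining pieces of an object in $\cZ(\cR_t)$: the underlying object in $\cR_t$, and the half-braiding.

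For the underlying object, I identify $e_\rho^*$ (the dual of $e_\rho$ in the pivotal structure of $\cR_t$) with $e_{\rho^*}$. Using the pivotal structure of $\cR_t$ recalled in Section \ref{deligne-sect}, where a morphism of the form $f\otimes A$ dualizes to $f^*\otimes A^t$, the computation reduces to showing $x_z^*=x_{z^{-1}}$. This follows from writing $x_z=z\cdot x_{1_n}$ as in the proof of \Cref{e-idemp}, observing that the partition defining $x_{1_n}$ is reflection-symmetric and hence self-dual, and that a permutation partition $\pi_z$ dualizes to $\pi_{z^{-1}}$. Reindexing the sum over $Z$ by $w=z^{-1}$ then yields
\[
e_\rho^*= \frac{1}{|Z|}\sum_{z\in Z} x_{z^{-1}}\otimes\rho(z)^t = \frac{1}{|Z|}\sum_{w\in Z} x_w\otimes \rho(w^{-1})^t = e_{\rho^*},
\]
so the underlying object of $(\uW{\sigma,\rho})^*$ coincides with that of $\uW{\sigma,\rho^*}$.

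For the half-braiding, by \Cref{centerconditions} it suffices to identify the dual half-braiding $(c^\rho)^*$ at $[1]$ with $c^{\rho^*}_{[1]}$, or equivalently to match the corresponding morphisms $d_1$ in $\End([n]^{\oplus k}\otimes[1])$. Writing $c^\rho_{[1]}=\Psi\circ d_1^\rho$ as in \Cref{d1} and using \Cref{d1inverse} to express $(c^\rho_{[1]})^{-1}$ in terms of $d_1^{\sigma^{-1},\rho}$, I plan to evaluate the right-hand side of \Cref{centerdual} at $X=[1]$ and to simplify the resulting composition of partition morphisms. The key ingredients are the transformation behaviour of the morphisms $E^i_j$ under the sandwich by $\coev$ and $\ev$, together with the interaction with the symmetric braiding $\Psi$ of $\cR_t$. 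Combining these with the matrix transpose in passing from $\rho$ to $\rho^*$ will produce, on the dual object, precisely the formula for $d_1^{\sigma,\rho^*}$ from \Cref{def-d1-d1rho}.

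The main obstacle is this diagrammatic manipulation: \Cref{centerdual} must be carried out faithfully in the partition calculus of Section \ref{deligne-sect}, and it has to be checked carefully that the three effects---the appearance of $c^{-1}$ which contributes $\sigma^{-1}$, the top/bottom flip $E^i_j\leftrightarrow E^j_i$ induced by $\ev$ and $\coev$, and the transpose in $\rho\mapsto\rho^*$---combine to give $d_1^{\sigma,\rho^*}$ on the nose rather than some other, twisted version. The pattern of argument is analogous to the proofs of \Cref{decommute} and \Cref{halfbraiding-prop}, and the annihilation result \Cref{lem-annihilation} will be needed to discard unwanted contributions arising from partitions strictly coarser than $\pi_\sigma$.
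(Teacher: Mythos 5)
Your proposal follows the same route as the paper: the computation $e_\rho^*=e_{\rho^*}$ via $x_z^*=x_{z^{-1}}$ and the matrix transpose is identical, and for the half-braiding the paper likewise evaluates \Cref{centerdual} at $[1]$, expresses the inverse through \Cref{d1inverse} as $f_1^{\oplus k}(e_\rho\otimes 1)$ with $f_1=1_{n+1}+\sum_i(E^i_{\sigma^{-1}(i)}-E^i_i)$, and uses the sandwich identity $(\ev_{[n]}\otimes\Psi_{[1],[n]})\left(1_n\otimes E^i_j\Psi_{[1],[n]}\otimes 1_n\right)(1_{n+1}\otimes\coev_{[n]})=E^j_i$, so that $\sum_i E^{\sigma^{-1}(i)}_i=\sum_i E^i_{\sigma(i)}$ recovers $d_1^{\sigma,\rho^*}$ on the nose, exactly as you predict. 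The only difference is that you defer this final diagrammatic check; it is precisely the top/bottom flip you anticipate, and no appeal to \Cref{lem-annihilation} is needed, since after commuting $e_\rho\otimes 1$ past $f_1^{\oplus k}$ (by \Cref{decommute}) and dualizing it to $e_\rho^*\otimes 1$, the remaining computation is direct.
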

\begin{proof}
Note that for $e_\rho=\sum_{z\in Z}x_z\otimes \rho(z)$, the idempotent describing the dual object in $\cR_t$ is given by the dual morphism, which is
\begin{align}
e_\rho^*=\sum_{z\in Z}x_{z}^*\otimes \rho(z)^t=\sum_{z\in Z}x_{z}\otimes \rho(z^{-1})^t=e_{\rho^*}.
\end{align}
This follows as for $\pi\in S_n$, the dual morphism is $\pi^{-1}$ and hence $x_\pi^*=x_{\pi^{-1}}$ follows from $x_\pi=\pi x_{1_n}$, while for a matrix $A$, the dual is $A^t$, cf. \Cref{deligne-sect}. .

The braiding on the dual is induced from $d_1^{\oplus k}(e_\rho^*\otimes 1)$.
To verify this, recall that by \Cref{centerdual}, the dual $(V^*,c^*)$ of an object $(V,c)$ in the center is obtained using the inverse of $c$.
It suffices here to consider $d_1^*=\Psi_{[1],[n]} c^*_1$. By Lemma \ref{d1inverse}, the inverse of $d_1^\rho$ is given by $(d_1^{\rho})^{-1}=f_1^{\oplus k}(e_\rho\otimes 1)$, where
\begin{align*}
f_1=1_{n+1} + \sum_{i=1}^n \big(E^i_{\sigma^{-1}(i)} - E^i_i\big).
\end{align*}
As $e_\rho\otimes 1$ commutes with $d_1^{\oplus k}$ by Lemma \ref{decommute}, it commutes with $f_1^{\oplus k}$. Hence,
\begin{align*}
(d_1^\rho)^*
&=(\ev_{[n]} \otimes \Psi_{[1],[n]})\left(1_n\otimes \left(f_1^{\oplus k}(e_\rho\otimes 1)\Psi_{[1],[n]}\right) \otimes 1_n\right)(1_{n+1}\otimes \coev_{[n]})\\
&=(\ev_{[n]}\otimes \Psi_{[1],[n]})\left(1_n\otimes \left(f_1^{\oplus k}\Psi_{[1],[n]}\right)\otimes 1_n\right)(1_{n+1}\otimes \coev_{[n]})(e_\rho^*\otimes 1).
\end{align*}
We see that
\begin{align*} 
(\ev_{[n]}\otimes \Psi_{[1],[n]})\left(1_n\otimes \left(E^i_j\Psi_{[1],[n]}\right)\otimes 1_n\right)(1_{n+1}\otimes \coev_{[n]})= E^{j}_i,
\end{align*}
and hence
\begin{align*} (d_1^\rho)^* =
&\Big(1_{n+1}+\sum_{i=1}^n \big(E^{\sigma^{-1}(i)}_{i} - E^i_i\big)\Big)^{\oplus k} (e^*_\rho\otimes 1)=d_1^{\oplus k}(e^*_\rho\otimes 1).
\end{align*}
The claim follows.
\end{proof}

\begin{corollary}
The category $\cD_t$ is a pivotal category.
\end{corollary}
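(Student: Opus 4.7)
The plan is to deduce this corollary from Lemma \ref{duallemma} together with the fact that $\cZ(\cR_t)$ is already a pivotal category, so the only content is to verify that $\cD_t$ is closed under taking duals inside $\cZ(\cR_t)$. Recall from Section \ref{ribbon-sect} that pivotality of $\cR_t$ (noted at the end of Section \ref{deligne-sect}) implies pivotality of $\cZ(\cR_t)$, and that the forgetful functor $\rF\colon \cZ(\cR_t)\to \cR_t$ is strictly pivotal. Hence one already has, for each object $(V,c)$ of $\cZ(\cR_t)$, a dual $(V^*,c^*)$ together with evaluation/coevaluation morphisms satisfying the pivotal axioms; the only thing to check is that when we restrict to objects built from the interpolation objects $\uW{\sigma,\rho}$, the duals stay in $\cD_t$.

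First, on the generators: Lemma \ref{duallemma} identifies the dual of $\uW{\sigma,\rho}$ in $\cZ(\cR_t)$ as $\uW{\sigma,\rho^*}$, where $\rho^*(z)=\rho(z^{-1})^t$. Since $\rho^*$ again satisfies \Cref{eq-rho} (it is obtained from $\rho$ by composing with the antipode of $\Bbbk Z$ and matrix transposition, both of which are anti-algebra maps), the object $\uW{\sigma,\rho^*}$ lies in $\cD_t^0$. Thus $\cD_t^0$ is closed under duals.

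Second, I would promote this closure through the two enlargement steps. Because duality is a contravariant strong monoidal functor, $(X\otimes Y)^*\cong Y^*\otimes X^*$, so $\cD_t^0$ being closed under $(-)^*$ together with tensor products already implies closure of the tensor-generated full subcategory. Taking duals commutes with finite biproducts, giving closure of $\cD_t^1$. Finally, for the idempotent completion: given $X=((V,c),e)$ in $\cD_t$, the dual is $(V^*,c^*)$ equipped with the transposed idempotent $e^*$, which still lies in $\cD_t^1$ by what we have just shown; thus $X^*\in \cD_t$ (cf.\ \Cref{idempotent-appendix}). Together with the inherited evaluation, coevaluation, and double-dual isomorphisms $X\cong X^{**}$ from $\cZ(\cR_t)$, these furnish $\cD_t$ with the pivotal structure.

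I do not expect a real obstacle here; the only subtlety worth spelling out is verifying that $\rho^*$ satisfies \Cref{eq-rho}, which is immediate from \Cref{lem-rho-idempotent} since $z\mapsto z^{-1}$ on $\Bbbk Z$ and matrix transposition on $M_k(\Bbbk)$ both send idempotents to idempotents in the opposite algebra, and the tensor product of these anti-algebra maps sends idempotents to idempotents in $\Bbbk Z\otimes M_k(\Bbbk)$. All remaining checks are formal consequences of the fact that $(-)^*$ is compatible with tensor products, biproducts, and idempotent completion in any pivotal $\Bbbk$-linear category.
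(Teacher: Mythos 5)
Your proposal is correct and follows essentially the same route as the paper: pivotality of $\cZ(\cR_t)$ from \cite{TV}*{Section 5.2.2} combined with \Cref{duallemma} to see that $\cD_t$ is closed under duals. The extra details you supply (that $\rho^*$ again satisfies \Cref{eq-rho} via the anti-automorphism $z\otimes A\mapsto z^{-1}\otimes A^t$, and the propagation of closure under duals through tensor products, biproducts, and idempotent completion) are correct elaborations of steps the paper leaves implicit.
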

\begin{proof}
It follows from \cite{TV}*{Section 5.2.2} that $\cZ(\cR_t)$ is a pivotal category. Now, Lemma \ref{duallemma} implies that the subcategory $\cD_t$ is closed under left duals, which coincide with right duals, and hence $\cD_t$ is also pivotal.
\end{proof}

\begin{theorem}\label{ribbon-thm}
The category $\cD_t$ is a ribbon category.
Given an interpolation object $V=\uW{\sigma,\rho}$ in $\cZ(\cR_t)$, the twist is given by
\begin{align}
\theta_{V}=\big(\sigma^{-1}\big)^{\oplus k}e_\rho.
\end{align}
\end{theorem}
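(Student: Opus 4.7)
The category $\cD_t$ inherits a braiding from $\cZ(\cR_t)$ and is pivotal by the preceding corollary, so the natural left and right twists $\theta^l_V, \theta^r_V$ of \Cref{ribbon-sect} are defined on every object $V$ of $\cD_t$. Being ribbon is equivalent to the identity $\theta^l = \theta^r$ holding on every object, and the plan is to verify this identity on the generating interpolation objects while simultaneously computing the common value.

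The first step is to reduce to the generators. Each of $\theta^l$ and $\theta^r$ is a natural automorphism of the identity functor satisfying the twist-tensor identity $\theta^{l/r}_{X \otimes Y} = (\theta^{l/r}_X \otimes \theta^{l/r}_Y) \circ \Psi_{Y,X} \Psi_{X,Y}$ in any braided pivotal category; this is a standard diagrammatic fact, as the twist of a cabled ribbon equals the product of the component twists and a double full braiding. Consequently, if $\theta^l = \theta^r$ on $V$ and on $W$, the two twists agree on $V \otimes W$; combined with naturality under direct sums and idempotent splittings, and using that $\cD_t$ is generated by the objects $\uW{\sigma,\rho}$, it suffices to verify the formula on a single interpolation object $V = \uW{\sigma,\rho}$.

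For the computation on $V$, we work through the strictly pivotal forgetful functor $\rF\colon \cZ(\cR_t) \to \cR_t$, so that the evaluation and coevaluation of $V$ are those of the underlying object $([n]^{\oplus k}, e_\rho)$ in $\cR_t$, given by the standard self-duality pairings of \Cref{deligne-sect} composed with $e_\rho$. The braiding $\Psi_{V,V}$ is the restriction of $c^\rho_{[n]^{\oplus k}}$, built from $d_1^\rho$ by iterating \Cref{inducedc} $n$ times. Expanding $d_1^\rho$ via \Cref{d1dash} summand by summand and tracing each term through the cup--braid--cap loop defining $\theta^l_V$, the constant $1_{n+1}$ and the $-E^i_i$ terms combine to a contribution that, using \Cref{lem-annihilation} to kill coarser partitions, reduces to the identity on the image of $e_\rho$. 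Each $E^i_{\sigma(i)}$ term, once threaded through the loop via the partitions of \Cref{Eij-def} and iterated $n$ times (with careful attention to the top-to-bottom reading of composition, which inverts the permutation after a full wrap), yields $\sigma^{-1}$ acting diagonally on $[n]^{\oplus k}$. Post-composing with $e_\rho$ then gives $\theta^l_V = (\sigma^{-1})^{\oplus k} e_\rho$. The right twist $\theta^r_V$ is computed symmetrically; the fact that $e_\rho$ commutes with $d_1$ (\Cref{decommute}) and that the inverse half-braiding is $d_1^{\sigma^{-1},\rho}$ (\Cref{d1inverse}) ensures the same value.

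The main obstacle will be the diagrammatic bookkeeping in the iterated half-braiding. One must verify that after $n$ iterations and closure by ev/coev, exactly the $(\sigma^{-1})^{\oplus k} e_\rho$ contribution survives the many cancellations among the $E^i_i$, $E^i_{\sigma(i)}$, and identity terms, and that the analogous cancellations work out identically for the right twist. The relations of \Cref{lem-annihilation} and \Cref{decommute} are what make the algebra tractable.
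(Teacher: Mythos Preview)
Your reduction to the generating interpolation objects via the tensor compatibility of $\theta^l$ and $\theta^r$, together with naturality under direct sums and idempotent splittings, is correct and in fact more explicit than the paper: the paper computes the two twists only on the generators $\uW{\sigma,\rho}$ and leaves the passage to all of $\cD_t$ implicit.

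Your sketch of the computation on $V=\uW{\sigma,\rho}$, however, misidentifies the cancellation mechanism. You describe the $1_{n+1}$ and $-E^i_i$ summands of $d_1$ as combining to ``a contribution that reduces to the identity on the image of $e_\rho$'' while the $E^i_{\sigma(i)}$ summands separately ``yield $\sigma^{-1}$''; read literally this would produce $(1+\sigma^{-1})e_\rho$. The point is that $c_{[n]}$ is an $n$-fold composite built from $d_1$, so expanding it into partitions produces all cross-terms across the $n$ factors, and these do not organise as a sum over the three summand types of a single $d_1$. The paper instead indexes the cross-terms by pairs of $n$-tuples $(\mathbf{i},\mathbf{j})$ with entries in $\{1,\ldots,n\}\cup\{|\}$, writes the contribution to $\theta^l_V$ after loop closure as $L^{\mathbf{i}}_{\mathbf{j}}$, and argues in three steps: first, \Cref{lem-annihilation} forces $\mathbf{j}=(1,\ldots,n)$ and the entries of $\mathbf{i}$ to be pairwise distinct, since otherwise two of the strands $1',\ldots,n'$ become connected through the closure; second, for each $p$ with $\sigma(p)\neq p$ the choices $i_p=j_p=p$ (from a $-E^p_p$ summand) and $i_p=j_p=|$ (from a $1_{n+1}$ summand) give equal $L$-values with opposite signs and hence cancel; third, the unique surviving term has $i_p=\sigma^{-1}(p)$ for every $p$, and one checks directly that $L^{(\sigma^{-1}(1),\ldots,\sigma^{-1}(n))}_{(1,\ldots,n)}=\sigma^{-1}e_\rho$. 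So \Cref{lem-annihilation} is indeed the engine, but the combinatorics runs through the full cross-term expansion rather than treating the summand types of $d_1$ independently; the analogous analysis with $\mathbf{i}$ and $\mathbf{j}$ interchanged gives $\theta^r_V$.
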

\begin{proof}
Recall from \Cref{centerbraiding} that in $\cZ(\cR_t)$, the braiding on an object $(V,c)$ is
$\Psi_{(V,c),(V,c)}=c_V.$ Hence, as $\cZ(\cR_t)$ pivotal, the left twist $\theta^l=\theta^l_{(V,c)}$ and right twist $\theta^r=\theta^r_{(V,c)}$ are given by
\begin{gather}\label{ltwistcenter}
\theta^l=(\ev^l_V\otimes \ide_V)(\ide_{V^*}\otimes c_{V})(\coev^r_V\otimes \ide_V), \\\theta^r=(\ide_V\otimes \ev^r_V)(c_V\otimes \ide_{V^*})(\ide_V\otimes \coev^l_V)\label{rtwistcenter}.
\end{gather}
Recall from \Cref{def-d1-d1rho} that
$$d_1^\rho=d_1^{\oplus k}(e_\rho\otimes 1)=(e_\rho\otimes 1)d_1^{\oplus k}$$
and recall that the half braiding $c^\rho_{[n]}$ is computed from $d_1^\rho$ via Equations (\ref{inducedc})--(\ref{d1}).
In particular, $c^\rho_{[n]}=c_{[n]}(e_\rho\otimes 1)$, where we can use
\begin{align*}
c_{[n]}=(1_{n-1}\otimes c_1)\ldots(1\otimes c_1\otimes 1_{n-2})(c_1\otimes 1_{n-1}).
\end{align*}
In the following computation, for the sake of easier notation, we use the morphism
$
d_{[n]}=\Psi_{[n],[n]}c_{[n]}.
$. Indeed, we claim that
\begin{align}\label{lefttwist-comp}
\theta^l_V=(\ev_{[n]}\otimes 1_n)(1_n\otimes c_{[n]})(1_n\otimes e_\rho\otimes e_\rho)(\coev_{[n]}\otimes 1_n)=\sigma^{-1}e_\rho.
\end{align}
To prove this claim, we introduce some notation. Let ${\mathbf{i}}=(i_1,\ldots, i_m)$ and ${\mathbf{j}}=(j_1,\ldots, j_m)$ be strings of indices of length $0\leq m\leq n$, where for any $p\in \Set{1,\ldots, m}$
\begin{itemize}
\item $i_p\in \Set{1,\ldots,n}$ or $j_p=|$;
\item if $i_p\in \Set{1,\ldots,n}$, then $j_p\in \Set{i_p,i_{\sigma(p)}}$; if $i_p=|$, then $j_p=|$.
\end{itemize}
We define the elements $E_{\mathbf{j}}^{\mathbf{i}}\in\End([n+m])$ recursively by $E^\varnothing_\varnothing=1_n$ and
\begin{align}
E_{\mathbf{j}}^{\mathbf{i}}
=(1_n\otimes \Psi_{[1],[m-1]}) \big(E_{j_n}^{i_n}\otimes 1_{n-1}\big) (1_n\otimes \Psi_{[m-1],[1]}) \big(E_{\hat{\mathbf{j}}}^{\hat{\mathbf{i}}}\otimes 1\big).
\end{align}
Here, ${\hat{\mathbf{i}}}=(i_1,\ldots, i_{m-1})$, ${\hat{\mathbf{j}}}=(j_1,\ldots, j_{m-1})$, and $E_{j_m}^{i_m}$ is defined as in \Cref{Eij-def} or if $i_m=j_m=|$, then $E_{j_m}^{i_m}=1_{n+1}$. Note that $E_{\mathbf{j}}^{\mathbf{i}}$ is an iteration of the notation in \Cref{Eijab-def} and \Cref{E--ab-def}.

Now the general form of summands appearing in the expansion of $\theta^l_V$ according to \Cref{lefttwist-comp} is
$$L_{\mathbf{j}}^{\mathbf{i}}:=(\ev_{[n]}\otimes 1_n)\big(1_n\otimes \Psi_{[n],[n]}E^{\mathbf{i}}_{\mathbf{j}}\big)(1_n\otimes e_\rho\otimes e_\rho)(\coev_{[n]}\otimes 1_n),$$
where $\mathbf{i},\mathbf{j}$ are of length $n$.

We first note that for $n=1$,
\begin{align*}
L_|^|
=(\ev_{[1]}\otimes 1)\big(1\otimes \Psi_{[1],[1]}E_|^|\big)(\coev_{[1]}\otimes 1) e_\rho
=(\ev_{[1]}\otimes 1)\big(1\otimes \Psi_{[1],[1]}E_1^1\big)(\coev_{[1]}\otimes 1) e_\rho
=L_{1}^1.
\end{align*}
Hence, inductively, we see that
\begin{align}\label{help-eqn}
L_{\mathbf{i}}^{\mathbf{i}}=L_{\mathbf{i}'}^{\mathbf{i}'}
\end{align}
where if $i_p=j_p=|$ for some $p$, then $i'_p=j'_p=p$. It therefore suffices to consider indices $\mathbf{i},\mathbf{j}\in \Set{1,\ldots, n}^n$.

Now consider the case where $\mathbf{i},\mathbf{j} \in \Set{1,\ldots, n}^n$. We observe that $L_{\mathbf{j}}^{\mathbf{i}}$ vanishes unless $\mathbf{j}=(1,\ldots, n)$. Indeed, if two indices repeat in either $\mathbf{i}$ or $\mathbf{j}$, then $L_{\mathbf{j}}^{\mathbf{i}}=0$ by \Cref{lem-annihilation}. Now assume that $j_p=a$, and $a\neq p$. Let us view $E^\mathbf{i}_\mathbf{j}$ as a subdiagram of a diagram corresponding to $L^\mathbf{i}_\mathbf{j}$ thought of as a composition as in \Cref{lefttwist-comp}, and let us temporarily use indices only referring to the vertices of this subdiagram. Then the vertices $a'$ and $(n+p)'$ are connected in the subdiagram. But $(n+p)'$ also connects to $p$, which, in turn, connects to $p'$ in the diagram of $L^\mathbf{i}_\mathbf{j}$. Thus, $a'$ and $p'$ are in the same connected component, and hence, again using \Cref{lem-annihilation}, $L_{\mathbf{j}}^{\mathbf{i}}=0$ as claimed. So we may assume $\mathbf{j}=(1,\ldots, n)$.

We recall that
$$d_1^{\oplus k}=\Big(1_{n+1} + \sum_{i=1}^n \big(E^i_{\sigma(i)}-E^i_i\big)\Big)\otimes I_k,$$
by \Cref{d1dash} and we note that if we consider any fixed orbit of the action of $\sigma$ on $\Set{1,\ldots,n}$, then $i_p=\sigma^{-1}(p)$ or $i_p=p$ simultaneously for all $p$ in this orbit.

We now claim that in the expansion of $\theta^l_V$ according to \Cref{lefttwist-comp},
$L_{\bf{j}}^{\bf i}$ has a zero coefficient if there exists an index $p\in\{1,\ldots, n\}$ such that $i_p=p\neq \sigma(p)$. Indeed, if such an index exists, then it appears, using \Cref{help-eqn}, with coefficient zero. This follows as both $E_{\mathbf{j}}^{{\mathbf{i}}}$ and $E_{\mathbf{j}'}^{{\mathbf{i}'}}$ contribute such a term, where $j_q=j_q'$, $i_q=i_q'$ for all $q\neq p$, but $j_p=p=i_p$ while $j'_p=|=i_p'$. The coefficient of
$E_{\mathbf{j}}^{{\mathbf{i}}}$ is the negative of the coefficient of $E_{\mathbf{j}'}^{{\mathbf{i}'}}$ according to the definition of $d_1$.

Hence, only $L_{(1,\ldots, n)}^{(\sigma^{-1}(1),\ldots, \sigma^{-1}(n))}$ has a non-zero coefficient in $\theta_V^l$. Its coefficient is equal to one.
We note that
$$L_{(1,\ldots, n)}^{(\sigma^{-1}(1),\ldots, \sigma^{-1}(n))}=\sigma^{-1}e_\rho$$
to complete the computation of $\theta^l_V$.

It is shown in a similar manner that $\theta^l_V=\sigma^{-1}e_\rho$. For this, we define
$$R_{\mathbf{j}}^{\mathbf{i}}:=(1_n\otimes \ev_{[n]})(\Psi_{[n],[n]}E^{\mathbf{i}}_{\mathbf{j}}\otimes 1_n)(1_n\otimes e_\rho\otimes e_\rho)(1_n\otimes\coev_{[n]}).$$
Similar combinatorial considerations as above show that $R_{\mathbf{j}}^{\mathbf{i}}=0$ unless
\begin{itemize}
\item $\mathbf{i}=(1,\ldots,n)$
\item $j_p$ is either always $p$ or $\sigma(p)$, for each orbit of $\sigma$ acting on $\Set{1,\ldots,n}$.
\item If $j_p=p\neq\sigma(p)$ for some $p$, then the coefficient of $R_{\mathbf{j}}^{\mathbf{i}}$ in $\theta^r_V$ is zero.
\end{itemize}
Thus, the computation of $\theta^r_V$ is completed by verifying that
$$L_{(\sigma(1),\ldots, \sigma(n))}^{(1,\ldots, n)}=\sigma^{-1}e_\rho.$$
Hence, we see that $\theta^l_V=\theta^r_V$, and we denote this twist by $\theta_V$.
\end{proof}

\begin{corollary}
An interpolation object $\uW{\sigma,\rho}$ has trivial twist iff \begin{align}
\rho(\sigma z)&=\rho(z), &\forall z\in Z.
\end{align}
\end{corollary}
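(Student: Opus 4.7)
The plan is to combine the explicit formula for the twist provided by \Cref{ribbon-thm} with the idempotent description of $\uW{\sigma,\rho}$ as $([n]^{\oplus k}, e_\rho)$, and to read off the triviality condition by exploiting the linear independence of the $x_z$ for $z\in Z$.

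Since in the idempotent completion the identity morphism on $\uW{\sigma,\rho}$ is realised by $e_\rho$ itself, \Cref{ribbon-thm} reformulates $\theta_{\uW{\sigma,\rho}}=\ide$ as the equality
\[
(\sigma^{-1})^{\oplus k}\,e_\rho = e_\rho
\qquad\text{in } \End([n]^{\oplus k}).
\]
The first move will be to expand the left-hand side using the definition $e_\rho = \tfrac{1}{|Z|}\sum_{z\in Z} x_z \otimes \rho(z)$ together with the elementary identity $\pi_g\, x_z = x_{gz}$, a direct consequence of $x_{g_1}x_{g_2}=x_{g_1g_2}$ already recorded in the proof of \Cref{e-idemp}. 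This yields
\[
(\sigma^{-1})^{\oplus k}\,e_\rho = \frac{1}{|Z|}\sum_{z\in Z} x_{\sigma^{-1}z} \otimes \rho(z).
\]
Reindexing via the bijection $z\mapsto \sigma z$ of $Z$, which is available because $\sigma\in Z$, turns the right-hand side into $\tfrac{1}{|Z|}\sum_{z\in Z} x_z \otimes \rho(\sigma z)$. Subtracting $e_\rho$ then rephrases triviality of the twist as the single vanishing
\[
\sum_{z\in Z} x_z \otimes \bigl(\rho(\sigma z) - \rho(z)\bigr) = 0
\qquad\text{in } \End([n])\otimes M_k(\Bbbk).
\]

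The concluding step, and really the only point requiring justification, is to deduce from this vanishing that $\rho(\sigma z)=\rho(z)$ for every $z\in Z$. Here I would invoke that the permutation partitions $\pi_z$, $z\in S_n$, are minima in the partial order on $P_{n,n}$, so that the family $\{x_z\}_{z\in Z}$ sits inside the basis $\{x_\lambda\}_{\lambda\in P_{n,n}}$ of $\End_{\cR^0_t}([n])$ and is in particular linearly independent. Under the identification $\End([n]^{\oplus k})\cong \End([n])\otimes M_k(\Bbbk)$, this linear independence forces each coefficient matrix $\rho(\sigma z)-\rho(z)$ to vanish, yielding the equivalence; the converse direction follows by reversing the same computation. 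The main obstacle, such as it is, is purely bookkeeping: one must carefully distinguish the permutation morphism $\pi_{\sigma^{-1}}$ — denoted $\sigma^{-1}$ in \Cref{ribbon-thm} — from the element $x_{\sigma^{-1}}$, and check that $\pi_{\sigma^{-1}}$ commutes with $e_\rho$ so that $(\sigma^{-1})^{\oplus k} e_\rho$ is a bona fide endomorphism of $\uW{\sigma,\rho}$; this commutation follows at once from $\sigma^{-1}$ centralising $Z$ together with $\pi_g x_z = x_{gz} = x_{zg} = x_z\pi_g$ for $g,z\in Z$.
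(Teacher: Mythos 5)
Your argument is correct and is essentially the paper's proof: both reduce triviality of the twist via \Cref{ribbon-thm} to $(\sigma^{-1})^{\oplus k}e_\rho=e_\rho$, expand $e_\rho=\tfrac{1}{|Z|}\sum_{z\in Z}x_z\otimes\rho(z)$, reindex over $Z$, and conclude from the linear independence of the elements $x_g$. One small aside: the permutation partitions $\pi_z$ are not minimal in the coarsening order on $P_{n,n}$ (the all-singletons partition is strictly finer), but that claim is not needed, since the $x_\lambda$ form a basis by unitriangularity of the change of basis, which is all your linear-independence step requires.
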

\begin{proof}
Using the definition of $e_\rho$ in \Cref{e-def},
\[
(\sigma\inv)^{\oplus k} e_\rho = e_\rho
~~\Longleftrightarrow~~
\frac{1}{|Z|} \sum_z x_{\sigma\inv z}\otimes\rho(z)
= \frac{1}{|Z|} \sum_z x_{z}\otimes\rho(z)
~~\Longleftrightarrow~~
\rho(\sigma z) = \rho(z)
\]
for all $z\in Z$, as the elements $(x_g)_{g\in S_n}$ are linearly independent.
\end{proof}

In particular, choosing $\rho$ to correspond to the trivial $Z$-character, the resulting object has trivial twist. We can compute the dimensions of interpolation objects  more explicitly.

\begin{proposition}$~$
\begin{enumerate}
    \item[(i)] Let $g\in S_n$, then $\tr(x_g)=\delta_{g,1_n}\prod_{i=0}^{n-1}(t-i)$.
    \item[(ii)] Let $f\in \End([n])$ and $A\in M_k(\Bbbk)$, then
    $\tr(f\otimes A)=\tr(f)\tr A $, where $\tr A$ is the usual matrix trace.
    \item[(iii)] Let $\uW{\sigma,\rho}$ be an interpolation object as in \Cref{intobject-def}, then
    \begin{equation}
        \dim(\uW{\sigma,\rho})= \frac{1}{|Z|}\prod_{i=0}^{n-1}(t-i)\tr \rho(1).
    \end{equation}
\end{enumerate}
\end{proposition}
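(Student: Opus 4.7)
The strategy is to observe that (iii) follows immediately from (i) and (ii) applied to the expression $e_\rho=\frac{1}{|Z|}\sum_z x_z\otimes\rho(z)$ from \Cref{e-def}, together with the fact that the identity morphism of the object $([n]^{\oplus k},e_\rho)$ in the Karoubian envelope is $e_\rho$ itself, so that $\dim(\uW{\sigma,\rho})=\tr(e_\rho)$. Hence the bulk of the work lies in establishing (i), while (ii) is a routine consequence of the matrix notation from \Cref{matrix-notation}.

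For (i), the plan is a polynomial interpolation argument. First, I would note that for any basis partition $\lambda\in P_{n,n}$, the trace $\tr(\lambda)$ in $\cR_t$ equals $t^{c(\hat\lambda)}$, where $c(\hat\lambda)$ is the number of connected components of the closure diagram; since each $x_g$ is a $\mZ$-linear combination of partitions (via the Möbius-type recursion of \Cref{ordering}), the map $t\mapsto \tr(x_g)$ is a polynomial in $t$ of degree at most $n$. It therefore suffices to verify the identity at infinitely many values of $t$, say at the integers $t=m$ for $m\geq n$. At such values I would apply the monoidal functor $\cF_m\colon \uRep(S_m)\to\Rep(S_m)$, which preserves categorical traces, and use the explicit description in \Cref{rem-f-x-g-n}: $\cF_m(x_g)$ sends $v_\mathbf{i}$ to $v_{(i_{g\inv(1)},\dots,i_{g\inv(n)})}$ when the entries of $\mathbf{i}$ are distinct and to zero otherwise. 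The trace of this linear endomorphism counts tuples with distinct entries satisfying $i_k=i_{g\inv(k)}$ for all $k$; distinctness forces $g=1_n$, in which case the count is $m(m-1)\cdots(m-n+1)=\prod_{i=0}^{n-1}(m-i)$. This matches $\delta_{g,1_n}\prod_{i=0}^{n-1}(t-i)$ at $t=m$, so the two polynomials agree identically.

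For (ii), I would just unpack the matrix convention of \Cref{matrix-notation}: the morphism $f\otimes A\in\End([n]^{\oplus k})$ is the $k\times k$ matrix of endomorphisms of $[n]$ with $(i,j)$-entry $A_{ij}f$. The duality data on $[n]^{\oplus k}$ is the biproduct of the duality data on each copy of $[n]$, hence the categorical trace splits as $\tr(f\otimes A)=\sum_{i=1}^k\tr(A_{ii}f)=\bigl(\sum_i A_{ii}\bigr)\tr(f)=\tr(A)\tr(f)$ by $\Bbbk$-linearity of the trace.

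Finally, for (iii) I would combine the above: $\dim(\uW{\sigma,\rho})=\tr(e_\rho)=\frac{1}{|Z|}\sum_{z\in Z}\tr(x_z\otimes\rho(z))=\frac{1}{|Z|}\sum_{z\in Z}\tr(x_z)\tr\rho(z)$, and (i) collapses the sum to its $z=1$ term, giving the asserted formula. The only potentially subtle point is the interpolation step in (i)—specifically, confirming that $\cF_m$ really does preserve the categorical trace so that one can transfer the computation from $\cR_m$ to $\Rep(S_m)$—but this is built into the fact that $\cF_m$ is a monoidal functor between pivotal categories, so I do not expect a genuine obstacle.
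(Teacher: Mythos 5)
Your proposal is correct, but for part (i) it takes a genuinely different route from the paper. The paper argues entirely inside the diagrammatic calculus: it establishes the recursion $x_{1_{n+1}}=\big(1_{n+1}-\sum_{i=1}^n E^i_i\big)(x_{1_n}\otimes 1)$ and deduces $\tr(x_{1_{n+1}})=(t-n)\tr(x_{1_n})$ by induction, using cyclic invariance of the trace and conjugation by the transposition diagrams $\pi_{(i,n)}$; the vanishing for $g\neq 1_n$ is then a one-line consequence of \Cref{lem-annihilation}, since closing the diagram connects $i$ and $g(i)$. You instead note that $\tr(x_g)$ is a polynomial function of $t$ (the coefficients in the recursion of \Cref{ordering} do not involve $t$, and each partition diagram has trace a power of $t$) and verify the identity at all integers $m\geq n$ by pushing the computation through $\cF_m$ into $\Rep(S_m)$, where counting fixed basis vectors of $\cF_m(x_g)$ gives exactly $\delta_{g,1_n}\prod_{i=0}^{n-1}(m-i)$; this evaluation argument is valid in characteristic zero. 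The one point you should make explicit is that $\cF_m$ preserves the categorical trace: this is not automatic for a bare monoidal functor, but follows here because, by \Cref{piimage}, $\cF_m$ sends the cap and cup partitions defining the self-duality of $[n]$ to the standard pairing and copairing on $V_m^{\otimes n}$ (and the crossing $\pi_X$ to the flip), so the categorical trace in $\cR_m$ becomes the ordinary matrix trace in $\Rep(S_m)$ — a check in the spirit of the lemma the paper uses before \Cref{DWinterpolation}. Your approach buys a short, conceptual argument that reuses the classical computation in $\Rep(S_m)$ and makes the polynomiality of the answer transparent; the paper's approach stays self-contained in $\cR_t$ for arbitrary $t$, reuses \Cref{lem-annihilation}, and needs no appeal to the interpolation functor. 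Parts (ii) and (iii) of your argument coincide with the paper's proof.
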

\begin{proof}\begin{enumerate}
    \item[(i)] We first observe that for $n\geq 0$,
    \begin{align}
        x_{1_{n+1}}=\Big(1_{n+1}-\sum_{i=1}^nE_i^i\Big)\left(x_{1_n}\otimes 1\right).
    \end{align}
    We now show that $\tr(x_{1_n})=\prod_{i=0}^{n-1}(t-i)$ by induction on $n$. The statement is clear for $n=0$, using the convention that the empty product equals $1$. For $n\geq 0$, it follows that
    \begin{align*}
        \tr(x_{1_{n+1}})&=\tr\left(x_{1_n}\otimes 1\right)-\tr\Big(\sum_{i=1}^nE_i^i \left(x_{1_n}\otimes 1\right)\Big)\\
        &=t\tr(x_{1_n})-\tr\Big(\sum_{i=1}^n\pi_{(i,n)} E_i^i \left(x_{1_n}\otimes 1\right)\pi_{(i,{n})}\Big)\\
        &=t\tr(x_{1_n})-\tr\Big(\sum_{i=1}^n\pi_{(i,n)} E_i^i\pi_{(i,n)} \left(x_{1_n}\otimes 1\right)\Big)\\
        &=t\tr(x_{1_n})-n\tr\Big(E_n^n \left(x_{1_n}\otimes 1\right)\Big)\\
        &=t\tr(x_{1_n})-n\tr\left(x_{1_n}\right)\\
        &=(t-n)\prod_{i=0}^{n-1}(t-i)=\prod_{i=0}^{n}(t-i).
    \end{align*}
    In the second equality, $\pi_{(i,n)}$ is the partition associated to the transposition $(i,n)$, and we use the cyclic invariance of the trace. The last equality uses the induction step.
    
    Now let $g\neq 1_n\in S_n$. Then $x_g=x_1g$. Let $i$ be such that $g(i)\neq i$. Then taking the trace connects $\Set{i,g(i)}$. Hence, by \Cref{lem-annihilation}, $\tr(x_g)=0$.
    \item[(ii)] Let $A$ be a $k\times k$-matrix, and $f\in \End([n])$. Then
    \begin{align*}
        \tr(f\otimes A)&= \ev_{[n]}^{\oplus k}\coev\left((f\otimes A)\otimes \ide\right)\coev_{[n]}^{\oplus k}\\
        &=\sum_{i=1}^k \ev_{[n]}\left((f\otimes A_{ii})\otimes \ide\right)\coev_{[n]}\\
        &=\tr(f)\tr(A).
    \end{align*}
    \item[(iii)] It follows directly from Part (i)--(ii) that
    \begin{align*}
        \dim(\uW{\sigma,\rho})=\tr e_\rho
        &=\frac{1}{|Z|}\sum_{z\in Z}\tr\left(x_z\otimes \rho(z)\right)
        =\frac{1}{|Z|}\prod_{i=0}^{n-1}(t-i)\tr \rho(1).&&\qedhere
    \end{align*}
\end{enumerate}
\end{proof}

\begin{remark}[The symmetric part of $\cD_t$]\label{easypart}
The category $\cR_t$ is symmetric monoidal. Hence, for any object $X\in \cR_t$, we obtain an object $(X,\Psi_{X,-})$ in $\cZ(\cR_t)$, with half-braiding  given by symmetric braiding $\Psi_{X,Y}$ of $\cR_t$, for $Y\in \cR_t$. All of these objects are contained in $\cD_t$. Indeed, let $n=1$ and $\sigma=1$ the unique element of $S_1$. In this case, $x_1=1$ and $d_1^\sigma=1_2$. The resulting interpolation object is just $([1],\Psi_{[1],-})$. The $n$-fold tensor product of this object displays $[n]$ with symmetric half-braiding as an object in $\cD_t^0$. As $\cD_t$ is the idempotent completion of $\cD_t^0$, we obtain that the image of $\cR_t$ inside of $\cZ(\cR_t)$ is contained in $\cD_t$. However, $\cD_t$ contains a wealth of other objects with non-symmetric half-braidings.
\end{remark}

To conclude this section, we mention the following open questions.

\begin{question}$~$\label{Dtquestions}
Is the monoidal center $\cZ(\cR_t)$ equivalent to $\cD_t$?
\end{question}

\begin{question}
What are the simple objects in $\cD_t?$ If $t$ is generic, is $\cZ(\cR_t)$ (or $\cD_t$) a semisimple category? Note that objects generating $\cD_t$ are parametrized by idempotent elements in the semisimple algebras $\Bbbk Z\otimes M_k(\Bbbk)$. By  \Cref{easypart} it follows that $\cD_t$ is \emph{not} semisimple of $t\in \mZ_{\geq 0}$.
\end{question}

Note that the indecomposable objects in $\uRep(S_t)$ were classified in \cite{CO}*{Theorem 3.7}. Up to isomorphism, the indecomposable objects are parametrized by the set of all partitions.

\subsection{A Functor to Crossed Modules}\label{functorcenter}

An explanation of the functor $\cF_n$ from Section \ref{functor-sect} is given by the process of \emph{semisimplification} of monoidal categories. This procedure is described in \cite{CO}*{Section 3.4}, it is a special case of a more general construction due to Barrett--Westbury (\cite{BW}, see also \cite{EGNO}*{Section 18.8}) valid for any spherical category $\cS$. We recall that a morphism $f\colon X\to Y$ in $\cS$ is \emph{negligible} if $\tr(fg)=0$ for all morphisms $g\colon Y\to X$. The set of negligible morphisms $\cN$ forms a tensor ideal in any spherical monoidal category.

We now extend Deligne's functor $\cF_n$ to the center. We use the following general fact. Recall that a \emph{tensor ideal} $\cI$ in a monoidal category $\cC$ is a collection of subsets $\cI(V,W)\subseteq \Hom(V,W)$ closed under 2-sided composition and tensor products by general morphisms in $\cC$. The \emph{quotient category} $\cC/\cI$ is a monoidal category having the same objects, but morphisms
$$\Hom_{\cC/\cI}(V,W)=\Hom(V,W)/\cI(V,W).$$ There is a natural quotient functor $\cQ\colon \cC\to \cC/\cI$

\begin{proposition}\label{centerlifts-prop}
Let $\cC$ be a monoidal category and $\cI$ a tensor ideal in $\cC$. Then the quotient functor induces a functor of braided monoidal categories
$$\cQ\colon \cZ(\cC)\longrightarrow\cZ(\cC/\cI).$$
\end{proposition}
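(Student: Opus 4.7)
The plan is to define $\cQ$ on objects of $\cZ(\cC)$ by $\cQ(V,c) = (V,\bar{c})$, where $\bar{c}_X := [c_X]$ is the image of the half-braiding component $c_X$ under the quotient $\Hom_\cC(V\otimes X, X\otimes V)\to \Hom_{\cC/\cI}(V\otimes X, X\otimes V)$. On morphisms, $\cQ$ acts by the quotient map as well, which is well-defined since a morphism $\phi\colon (V,c)\to(V',c')$ in $\cZ(\cC)$ is in particular a morphism $\phi\colon V\to V'$ in $\cC$.

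First I would check that $\bar{c}$ is a half-braiding on $V$ in $\cC/\cI$. Naturality is the only nontrivial point. Given any morphism $f\colon X\to Y$ in $\cC/\cI$, by construction of the quotient category we may lift $f$ to some $\tilde{f}\colon X\to Y$ in $\cC$. The naturality square for $c$ applied to $\tilde{f}$ reads $c_Y\circ(\ide_V\otimes \tilde{f}) = (\tilde{f}\otimes\ide_V)\circ c_X$ in $\cC$, and passing to $\cC/\cI$ gives the required equation for $\bar{c}$ and $f$. The outcome does not depend on the lift, because two lifts differ by a morphism in $\cI$ and $\cI$ is closed under tensoring with arbitrary morphisms. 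The tensor compatibility \eqref{tensorcomp} for $\bar{c}$ is the image under $\cQ$ of the corresponding identity for $c$, and invertibility passes through $\cQ$ since $\bar{c}_X$ has inverse $\overline{c_X^{-1}}$. Thus $(V,\bar{c})\in\cZ(\cC/\cI)$.

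Next I would verify that $\cQ$ is well-defined on morphisms and functorial. If $\phi\colon (V,c)\to(V',c')$ satisfies $(\ide_X\otimes\phi)c_X=c'_X(\phi\otimes\ide_X)$ in $\cC$ for every $X$, then applying the quotient gives the same identity with $\bar{c},\bar{c}'$ in $\cC/\cI$, so $\cQ(\phi)$ is a morphism in $\cZ(\cC/\cI)$. Functoriality is immediate from functoriality of the underlying quotient $\cC\to\cC/\cI$.

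Finally I would check the monoidal and braided structure. The tensor product in the center is $(V,c)\otimes(V',c') = (V\otimes V', c\square c')$ with $(c\square c')_X = (c_X\otimes\ide_{V'})(\ide_V\otimes c'_X)$, and the analogous formula for $\bar{c}\square\bar{c}'$ is the image of this under $\cQ$. Hence $\cQ$ is strict monoidal. The braiding in $\cZ(\cC)$ is $\Psi_{(V,c),(V',c')}=c_{V'}$ by \eqref{centerbraiding}, which maps under $\cQ$ to $\bar{c}_{V'}=\Psi_{(V,\bar{c}),(V',\bar{c}')}$, so $\cQ$ preserves the braiding. There is no real obstacle here; the only subtle point is the well-definedness of naturality of $\bar{c}$ under lifts, which uses precisely that $\cI$ is a two-sided tensor ideal.
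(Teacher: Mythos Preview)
Your proposal is correct and follows essentially the same approach as the paper: apply the quotient functor to each component $c_X$ of the half-braiding and observe that all the axioms descend. The paper's proof is a two-sentence sketch of this construction, whereas you carefully verify naturality, tensor-compatibility, invertibility, functoriality, and compatibility with the braided monoidal structure; your remark about lifting morphisms to check naturality makes explicit the one point the paper leaves implicit.
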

\begin{proof}
The quotient category $\cC/\cI$ has the same objects as $\cC$. Let $(V,c)$ be an object in $\cZ(\cC)$. Given an object $M$ of $\cC$, consider $\cQ(c_M)\colon \cQ(V\otimes M)\to \cQ(M\otimes V)$. Fixing a choice of structural natural isomorphism $\mu^{\cQ}_{V,W}\colon \cQ(V\otimes W)\isomorph \cQ(V)\otimes \cQ(W)$, we produce a natural isomorphism $c_M'\colon \cQ(V)\otimes \cQ(M)\isomorph \cQ(M)\otimes \cQ(V)$ which still satisfies the axioms of a half-braiding. Here, we identify objects of $\cC$ and $\cC/\cI$.
\end{proof}

Let us now turn to the monoidal category $\cR_n$ and the tensor ideal $\cN$ formed by its negligible morphisms.
\begin{theorem}[\cite{Del}, Th\'eor\`eme 6.2]
For $n\in \mZ_{\geq 0}$, there is an equivalence of categories $\cR_n/\cN\cong \Rep(S_n)$. Under this equivalence, the functor $\cF_n$ is natural isomorphic to the quotient functor $\cQ$.
\end{theorem}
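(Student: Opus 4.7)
The plan is to use the general principle from \Cref{centerlifts-prop} in reverse: to identify the semisimplification $\cR_n/\cN$ with $\Rep(S_n)$ by showing that $\cF_n$ descends to an equivalence out of the quotient. Since $\cF_n \colon \cR_n \to \Rep(S_n)$ is already known to be essentially surjective and full on morphisms (\Cref{functor-sect}), and since $\Rep(S_n)$ has the universal mapping property among \emph{faithful} quotients in this situation, the entire content reduces to identifying the kernel ideal $\ker(\cF_n) := \{ f \mid \cF_n(f) = 0\}$ with $\cN$.

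First I would verify the inclusion $\cN \subseteq \ker(\cF_n)$. The functor $\cF_n$ is a $\Bbbk$-linear symmetric monoidal functor between spherical categories, and it strictly preserves pivotal structures, so it preserves traces: $\tr_{\Rep(S_n)}(\cF_n(h)) = \tr_{\cR_n}(h)$ for every endomorphism $h$ (both traces take values in $\Bbbk = \End(I)$). Given $f \in \cN(X,Y)$, fullness of $\cF_n$ means every morphism $\Rep(S_n)(\cF_n(Y),\cF_n(X))$ is of the form $\cF_n(g)$; hence
\begin{equation*}
\tr_{\Rep(S_n)}(\cF_n(f)\cF_n(g)) = \tr_{\cR_n}(fg) = 0
\end{equation*}
for all such $g$, so $\cF_n(f)$ is negligible in $\Rep(S_n)$. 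Because $\Rep(S_n)$ is semisimple and any negligible morphism in a semisimple category vanishes, $\cF_n(f) = 0$.

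Second, I would show the reverse inclusion $\ker(\cF_n) \subseteq \cN$. If $\cF_n(f) = 0$ for $f \colon X \to Y$, then for every $g \colon Y \to X$ we have $\cF_n(fg) = 0$, and trace preservation gives
\begin{equation*}
\tr_{\cR_n}(fg) = \tr_{\Rep(S_n)}(\cF_n(fg)) = 0,
\end{equation*}
so $f$ is negligible. Thus $\ker(\cF_n) = \cN$. It follows that $\cF_n$ factors uniquely as $\cR_n \xrightarrow{\cQ} \cR_n/\cN \xrightarrow{\overline{\cF_n}} \Rep(S_n)$, where $\overline{\cF_n}$ is essentially surjective and full (inherited from $\cF_n$) and now also faithful (by construction of $\cR_n/\cN$); hence $\overline{\cF_n}$ is an equivalence of $\Bbbk$-linear monoidal categories. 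The second claim, that $\cF_n$ corresponds to the quotient $\cQ$ under this equivalence, is immediate from the factorization $\cF_n = \overline{\cF_n} \circ \cQ$.

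The main obstacle is the trace-preservation step, which implicitly relies on the pivotal/spherical compatibility of $\cF_n$: one must know that $\cF_n$ sends the duality data and ribbon twist of $\cR_n$ to those of $\Rep(S_n)$, so that diagrammatic traces computed in $\cR_n$ match group-theoretic character traces. Once that compatibility is in place (which is built into Deligne's original construction of $\cF_n$ from the partition-algebra presentation), the remaining arguments are formal, and the theorem follows as a specialization of the Barrett--Westbury semisimplification principle to the category $\cR_n$.
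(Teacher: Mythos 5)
Your argument is correct, and it is necessarily a different route from the paper's, because the paper does not prove this statement at all: it is quoted verbatim from Deligne (\cite{Del}*{Th\'eor\`eme 6.2}), with \cite{CO}*{Section 3.4} as the background reference. What you supply is the standard self-contained proof: identify $\ker(\cF_n)$ with the negligible ideal $\cN$ via trace preservation in both directions, then factor $\cF_n=\overline{\cF_n}\circ\cQ$ and observe that $\overline{\cF_n}$ is full, faithful and essentially surjective. It is worth noting that the paper itself later runs a closely related but structurally different argument in \Cref{functorcenter}: the unnumbered lemma preceding \Cref{cor-semisimplification} deduces $\ker=\cN$ not by computing traces but by invoking the fact that $\cN$ is the \emph{unique maximal} tensor ideal (\cite{Cou}*{Lemma 2.5.3}, \cite{AK}*{Proposition 7.1.4}), so that the kernel of any full, essentially surjective functor onto a semisimple target must contain, hence equal, $\cN$. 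Your trace-based argument is more elementary and makes the mechanism explicit; the maximality argument is softer and applies without having to verify pivotal compatibility of the functor.

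Two small points you should tighten if this were to stand as a proof. First, ``any negligible morphism in a semisimple category vanishes'' requires that all simple objects have nonzero categorical dimension (equivalently, nondegeneracy of the trace pairing); this holds for $\Rep(S_n)$ in characteristic zero with its standard spherical structure, where categorical dimensions are the ordinary vector space dimensions, but it is not a formal consequence of semisimplicity alone. Second, trace preservation by $\cF_n$ deserves one line of justification: either note that $\cF_n$ is a pivotal symmetric monoidal functor (it sends the self-dual object $[d]$ with its evaluation and coevaluation to $V_n^{\otimes d}$ with the standard pairing), or verify directly on partition diagrams that the closure of $\pi\in P_{d,d}$ evaluates to $n^{c}$ on both sides, where $c$ is the number of connected components of the closed diagram, using \Cref{piimage}. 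With these points in place your argument is complete.
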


\begin{proposition} Let $\mu\vdash n$ and assume that $\rho\colon Z\to M_k(\Bbbk)$ corresponds to a simple $Z(\mu)$-module $V$ as in \Cref{ex-rho-irrep}. Then the induced functor $\cF_n\colon \cZ(\uRep(S_n))\to\cZ(\Rep(S_n))$ sends the interpolation object  $\uW{\mu,\rho}$ from \Cref{intobject-def} and \Cref{uWrho} to the simple object $W_{\mu,\rho}$ of \Cref{Sncenter-thm}.
\end{proposition}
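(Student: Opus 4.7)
The plan is to verify that $\cF_n(\uW{\mu,\rho})$ agrees with $W_{\mu,\rho}$ both as an $S_n$-module and as an object of the Drinfeld center of $\Rep(S_n)$. The first of these is essentially already available: \Cref{inducedlift-prop} together with \Cref{rem-image-of-e-rho} identifies the underlying $S_n$-module $\cF_n([n]^{\oplus k},e_\rho)$ with $\Ind_Z^{S_n}(V)\cong W_{\mu,\rho}$, where a coset representative $h_i$ of $S_n/Z$ gives an element $h_i\otimes v$ which, by the construction recalled in \Cref{Sncenter}, lies in the homogeneous component of degree $h_i\sigma h_i^{-1}$. What remains is to match the half-braidings.

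Since half-braidings on both sides are determined by their values on $[1]$, respectively on $V_n=\cF_n([1])$, via the tensor-product compatibility \eqref{tensorcomp}, and since $\cF_n$ is monoidal, it suffices to compare $\cF_n(c_1^\rho)$ with formula \eqref{ch}. Writing $c_1=\Psi_{[n],[1]}d_1$, I would reduce this to computing $\cF_n(d_1)$. The crux is the partition-level calculation of $\cF_n(E^i_j)$: applying the refinement criterion \eqref{piimage} to the three-vertex blocks of the partition \eqref{Eij-def} and using the identification $\cF_n(([n],x_{1_n}))\cong\Bbbk S_n$ from \Cref{rem-f-x-g-n}, one obtains, uniformly in $i,j$,
\begin{equation*}
\cF_n(E^i_j)(g\otimes v_k) = \delta_{k,g(i)}\,g\otimes v_{g(j)}.
\end{equation*}
Summing according to the definition \eqref{d1dash} of $d_1$ then yields $\cF_n(d_1)(g\otimes v_k) = g\otimes v_{(g\sigma g^{-1})(k)}$.

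Applying the swap and restricting to the image of $\cF_n(e_\rho)$, one computes
\begin{align*}
\cF_n(c_1^\rho)\left(|Z|^{-1}\sum_{z\in Z} gz^{-1}\otimes\rho(z)v\otimes v_k\right)
 &= |Z|^{-1}\sum_{z\in Z} v_{(gz^{-1}\sigma zg^{-1})(k)}\otimes(gz^{-1}\otimes\rho(z)v) \\
 &= v_{(g\sigma g^{-1})(k)}\otimes |Z|^{-1}\sum_{z\in Z} gz^{-1}\otimes\rho(z)v,
\end{align*}
where the central observation is that $z^{-1}\sigma z=\sigma$ for $z\in Z$, so the $V_n$-tensor factor is independent of $z$ and the average factors out. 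Specializing $g=h_i$ gives $\cF_n(c_1^\rho)((h_i\otimes v)\otimes v_k)=v_{(h_i\sigma h_i^{-1})(k)}\otimes(h_i\otimes v)$, matching \eqref{ch} for the crossed module $W_{\mu,\rho}$. The main technical obstacle is the partition-level computation of $\cF_n(E^i_j)$; once that is in hand, the remaining manipulations are routine, with the substantive conceptual point being the transparent use of the centralizer condition in the final step.
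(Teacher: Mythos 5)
Your proposal is correct and follows essentially the same route as the paper: the underlying $S_n$-module is identified via \Cref{inducedlift-prop} and \Cref{rem-image-of-e-rho}, and the half-braiding is matched by computing $\cF_n(d_1)$ on basis vectors (the paper obtains your formula $\cF_n(d_1)(g\otimes v_k)=g\otimes v_{g\sigma g^{-1}(k)}$ directly from \eqref{piimage}--\eqref{ximage} rather than term-by-term through $\cF_n(E^i_j)$) and comparing with $c_{V_n}$ from \eqref{ch}. Your explicit treatment of the $Z$-average using $z^{-1}\sigma z=\sigma$ is a harmless elaboration of the same argument.
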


\begin{proof} Let $\sigma\in S_n$ be of cycle type $\mu$ and let us denote $V=\Bbbk^k$. If $\rho$ is a irreducible module over $Z=Z(\mu)$ as assumed, then, by \Cref{inducedlift-prop} and \Cref{rem-image-of-e-rho}, 
$\cF_n(([n]^{\oplus k},e))\cong W_{\mu,\rho}=\Bbbk S_n\otimes_{\Bbbk Z} V=\Ind_Z^{S_n}(V)$ is the induced module, as desired.

It remains to check that $\cF_n(d_1^{\rho})$ recovers the morphism $c_{V_n}$ from \Cref{ch}. Indeed, it follows from \Cref{piimage} and \Cref{ximage} that $d_1$ corresponds to the morphism $V_n^{\otimes (n+1)}\to V_n^{\otimes (n+1)}$ given by
\begin{align*}
v_{i_1,\ldots, i_n}\otimes v_j\longmapsto \begin{cases}v_{i_1,\ldots, i_n}\otimes  v_{g\sigma g\inv(j)},& \text{if } (i_1,\ldots, i_n)\text{ are pairwise distinct}, \\
0, & \text{otherwise}.
\end{cases}
\end{align*}
where $v_{i_1,\ldots, i_n}=v_{i_1}\otimes \ldots \otimes v_{i_n}\in V_n^{\otimes n}$, $g=\begin{pmatrix} 1 &\dots& n \\ i_1 & \dots & i_n \end{pmatrix}\in S_n$,  and $v_j\in V_n$. 

Now $\cF_n(x_{1_n})(v_{i_1,\dots,i_n})$ corresponds to $g$ when identifying $\cF_n(x_{1_n})(V_n)$ with $\Bbbk S_n$. Hence, for any $v\in V$, $\cF_n(e)(v_{i_1,\dots,i_n}\otimes v)$ corresponds to $g\otimes v$ when identifying $\cF_n(([n]^{\oplus k}, e_\rho))$ with $\Bbbk S_n\otimes_{\Bbbk Z} V$ (see \Cref{inducedlift-prop} and \Cref{rem-image-of-e-rho}). In particular, this is an element of $S_n$-degree $g\sigma g\inv$ (see \Cref{Sncenter}).

Hence, under the above isomorphism, $$\Psi_{W,V_n}\circ \cF_n(d_1^\rho)((g\otimes v)\otimes v_j)=c_{V_n}((g\otimes v)\otimes v_j),$$ where $\Psi$ denotes the symmetry of $\Rep(S_n)$.
\end{proof}

\begin{theorem}\label{maintheorem1}
The restriction of the braided pivotal functor $\cQ\colon \cZ(\uRep(S_n))\to \cZ(\Rep(S_n))$ to $\cD_n$ is essentially surjective and full on morphism spaces.
\end{theorem}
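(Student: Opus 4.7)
The statement has two parts: essential surjectivity and fullness on morphism spaces.

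For essential surjectivity, I use that $\cZ(\Rep(S_n))$ is semisimple (as $\Bbbk$ has characteristic zero), with simples parametrized by pairs $(\mu,\rho)$ as in \Cref{Sncenter-thm}. The preceding proposition identifies $\cQ(\uW{\mu,\rho}) \cong W_{\mu,\rho}$ for each irreducible $\rho$, and since $\cD_n$ contains all direct sums of interpolation objects, every object of the target lies in the essential image.

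For fullness, the plan is to reduce to lifting isomorphisms between interpolation objects whose $\cQ$-images agree on a simple, where \Cref{equivalence-conj} supplies the explicit lift. The reduction proceeds in three steps. First, fullness of a functor is preserved by passage to additive and Karoubian closures when the target is already idempotent-complete, via the standard formulas decomposing $\Hom$-spaces over summands and $\Hom((X,p),(Y,q))= q\circ\Hom(X,Y)\circ p$; hence it suffices to verify fullness on $\cD_n^0$, whose objects are tensor products of interpolation objects. Second, any $\rho$ satisfying \eqref{eq-rho} decomposes via the primitive idempotent decomposition of the semisimple algebra $\Bbbk Z \otimes M_k(\Bbbk)$, so $\uW{\sigma,\rho}$ splits in $\cD_n$ into summands with irreducible parameters, and one may assume irreducibility throughout. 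Third, for $X,Y$ tensor products of irreducible interpolation objects, semisimplicity of the target yields decompositions $\cQ X \cong \bigoplus_a M_a$ and $\cQ Y \cong \bigoplus_b N_b$ into simples, so any $\phi\colon \cQ X\to \cQ Y$ decomposes as a matrix of components $\phi_{ba}\colon M_a\to N_b$ which are either zero (when $M_a\not\cong N_b$) or scalar multiples of isomorphisms (when $M_a\cong N_b$).

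In the nontrivial case, \Cref{Sncenter-thm} forces the data $(\sigma_a,\rho_a)$ and $(\sigma_b,\rho_b)$ underlying the isomorphic simples to be conjugate, and \Cref{equivalence-conj} then produces an explicit isomorphism between the associated interpolation objects in $\cZ(\uRep(S_n))$. Scaling this isomorphism recovers any target entry, so each component $\phi_{ba}$ lifts individually.

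The main obstacle will be assembling these componentwise lifts into a single morphism $\tilde\phi\colon X\to Y$ in $\cD_n$ matching the target-side semisimple decomposition. This requires lifting the isotypic idempotents of $\End_{\cZ(\Rep(S_n))}(\cQ X)$ and $\End_{\cZ(\Rep(S_n))}(\cQ Y)$ to idempotents in $\End_{\cD_n}(X)$ and $\End_{\cD_n}(Y)$, inducing direct sum decompositions of $X,Y$ compatible with \Cref{equivalence-conj}. I would handle this by combining finite-dimensionality of the $\End$-algebras in $\cD_n$ (inherited from the finite partition-algebra structure of $\uRep(S_n)$), standard idempotent lifting across the nilpotent kernel of $\cQ$ on these algebras, and the idempotent completeness of $\cD_n$.
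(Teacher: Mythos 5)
Your treatment of essential surjectivity matches the paper exactly: every simple $W_{\mu,\rho}$ is isomorphic to $\cQ(\uW{\mu,\rho})$ by the proposition preceding \Cref{maintheorem1}, and semisimplicity of $\cZ(\Rep(S_n))$ together with closure of $\cD_n$ under direct sums finishes that half. For fullness, however, the paper's proof is a one-line deduction from semisimplicity of the target and Schur's lemma; it does not attempt the componentwise reconstruction you set up, and it is precisely in making that reconstruction precise that your proposal develops a genuine gap.

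The gap sits at the step you yourself flag as the ``main obstacle'', and the mechanism you propose does not close it. To lift an isotypic idempotent $\bar e\in\End_{\cZ(\Rep(S_n))}(\cQ X)$ along the algebra map $q\colon \End_{\cD_n}(X)\to\End_{\cZ(\Rep(S_n))}(\cQ X)$, you must first know that $\bar e$ lies in the image of $q$; but surjectivity of $q$ is exactly fullness on endomorphism spaces, i.e.\ the statement being proved, so the argument as written is circular. Nor is the kernel of $q$ known to be nilpotent at this stage: it is the ideal of negligible morphisms in a finite-dimensional algebra, and its nilpotency is essentially equivalent to semisimplicity of the image of $q$, which in the paper's logical order is only obtained in \Cref{cor-semisimplification} \emph{as a consequence of} \Cref{maintheorem1}, not as an input to it. Finally, even granting lifted idempotents and idempotent completeness, the resulting summands of $X$ in $\cD_n$ are not given to you as interpolation objects: \Cref{equivalence-conj} produces isomorphisms between objects of the form $\uW{\sigma_1,\rho_1}$ and $\uW{\sigma_2,\rho_2}$ with conjugate data, whereas your components $\phi_{ba}\colon M_a\to N_b$ live between abstract simple summands of $\cQ X$ and $\cQ Y$; identifying the lifted summands with some $\uW{\sigma,\rho}$ in $\cZ(\uRep(S_n))$ (rather than merely after applying $\cQ$) is again a lifting statement of the same kind, supported by \Cref{Sncenter-thm} only on the target side. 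So the fullness half of your proposal does not go through as written, while the paper's own proof simply invokes Schur's lemma in the semisimple target once all simples are known to be in the essential image.
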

\begin{proof}
We have seen in \Cref{centerlifts-prop} that any simple object in $\cZ(\Rep(S_n)$ is isomorphic to an object in the image of an object of $\cD_n$ under $\cF_n$. As $\cZ(\Rep(S_n))$ is semisimple, this also implies fullness by Schur's Lemma.
\end{proof}

\begin{remark} By virtue of being a braided pivotal functor, $\cQ$ maps the twist of $\cD_n$ to the usual twist of $\cZ(\Rep(S_n))$.
\end{remark}

The following lemma and corollary were added to answer a question raised by an anonymous referee. Here, we use the \emph{semisimplification} of a Karoubian $\Bbbk$-linear monoidal pivotal category of \cite{EO}*{Section 2.3} (which generalizes \cite{BW}), i.e., the quotient by the tensor ideal of negligible morphisms.

\begin{lemma} If $\cC$ is a $\Bbbk$-linear Karoubian pivotal braided monoidal category whose unit object has endomorphism ring isomorphic to $\Bbbk$, then the negligible morphisms in $\cZ(\cC)$ form the unique maximal tensor ideal $\cN$ of $\cZ(\cC)$.
If $\cQ \colon\cZ(\cC)\to\cD$ is an essentially surjective full braided pivotal functor, where $\cD$ is a semisimple pivotal braided monoidal category, then $\cD$ is equivalent to the semisimplification of $\cZ(\cC)$ and
$\cQ$ corresponds to the quotient functor by $\cN$ under this equivalence.  
\end{lemma}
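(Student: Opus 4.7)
The plan is to treat the two assertions in turn. I write $\cZ := \cZ(\cC)$, and use throughout that $\End_\cZ(I) = \End_\cC(I) = \Bbbk$ (because the unit of $\cZ$ is $I$ with trivial half-braiding) and that $\cZ$ inherits a pivotal structure from $\cC$.

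For the first part, the first step is to check that $\cN$ is a tensor ideal. Closure under linear combinations and under pre- or post-composition with arbitrary morphisms is immediate from bilinearity and cyclic invariance of the trace in a pivotal category. Closure under $f \mapsto f\otimes \ide_W$ comes from the partial-trace identity: $\tr((f\otimes\ide_W)\circ g)$ for $g\colon Y\otimes W\to X\otimes W$ equals $\tr(f\circ h)$, where $h\colon Y\to X$ is the partial trace of $g$ over $W$, so it vanishes whenever $f$ is negligible. The second step is maximality: for any tensor ideal $\cI\subseteq \cZ$ with $\ide_I\notin \cI$, the one-dimensional constraint $\End_\cZ(I) = \Bbbk$ forces $\cI(I,I) = 0$. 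For $f\in \cI(X,Y)$ and any $g\colon Y\to X$ the composite $fg\in\cI(X,X)$, and its trace, being a composition of $fg$ with structure morphisms, lies in $\cI(I,I)=0$; hence $f\in\cN$. Thus every proper tensor ideal is contained in $\cN$, and applied to $\cN$ itself this shows $\cN(I,I) = 0$, so $\cN$ is itself proper and is the unique maximal proper tensor ideal.

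For the second part, let $\cK := \ker(\cQ)$, a tensor ideal, and note that essential surjectivity and fullness of $\cQ$ induce a canonical equivalence $\overline{\cQ}\colon \cZ/\cK \isomorph \cD$ (fullness passes to $\overline{\cQ}$, and the quotient makes $\overline{\cQ}$ faithful by construction). Because $\cQ(\ide_I)\ne 0$ in $\cD$, $\cK$ is proper, so the first part gives $\cK\subseteq \cN$. For the reverse inclusion, strict pivotality and braidedness of $\cQ$ imply that it preserves traces, and so $\cQ(\cN)\subseteq \cN_\cD$, the negligible ideal of $\cD$. The crucial remaining point is that $\cN_\cD = 0$: in a semisimple $\Bbbk$-linear pivotal category with simple tensor unit, Schur's lemma gives $\End(X) = \Bbbk$ for each simple $X$, and then $\ide_X$ is negligible if and only if $\dim X = 0$; under the interpretation of ``semisimple'' in force here --- that every simple object has non-zero categorical dimension, which is automatic for $\cD = \cZ(\Rep(S_n))$ since that category is fusion in characteristic zero --- this forces $\cN_\cD = 0$. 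Hence $\cN\subseteq \cK$, and combined with the reverse inclusion $\cK = \cN$. Thus $\overline{\cQ}\colon \cZ/\cN \isomorph \cD$ is the claimed equivalence, identifying $\cQ$ with the semisimplification quotient.

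The step I expect to be the main obstacle is precisely the implicit strengthening of ``semisimple''. A priori a semisimple category may contain simple objects of dimension zero whose identity endomorphisms lie in $\cN_\cD \setminus \{0\}$, in which case $\cD$ is strictly larger than $\cZ/\cN$ and the lemma fails. The proof as outlined therefore either needs an explicit non-degeneracy hypothesis on the simple objects of $\cD$, or the convention (standard in the fusion-category literature and holding in the intended application) that no simple object of $\cD$ has vanishing categorical dimension.
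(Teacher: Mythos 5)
Your argument has the same skeleton as the paper's: establish that $\cN$ is the unique maximal proper tensor ideal of $\cZ(\cC)$, then compare it with $\cK=\ker\cQ$ via the induced equivalence $\cZ(\cC)/\cK\simeq\cD$. The differences are in the details. For the first part the paper simply cites Coulembier (Lemma 2.5.3) resp.\ Andr\'e--Kahn (Proposition 7.1.4); your direct argument --- a proper tensor ideal $\cI$ satisfies $\cI(I,I)=0$ because $\End(I)=\Bbbk$, and $\tr(fg)\in\cI(I,I)$ for $f\in\cI$ --- is exactly the standard proof behind those citations and is fine, except that properness of $\cN$ should be checked first via $\tr(\ide_I)=1\neq 0$ rather than by ``applying the containment to $\cN$ itself''. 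For the second part, $\cK\subseteq\cN$ by maximality is common to both proofs; for the reverse inclusion you use trace preservation and fullness to get $\cQ(\cN)\subseteq\cN_\cD$ and then need $\cN_\cD=0$, whereas the paper asserts that a tensor ideal with semisimple quotient must contain $\cN$ and uses trace preservation only to verify the Etingof--Ostrik hypothesis that nilpotent endomorphisms of $\cZ(\cC)$ have trace zero, so that $\cZ(\cC)/\cN$ is a semisimplification in their sense. Note that on your route this last verification is superfluous: once $\cK=\cN$ is known, semisimplicity of $\cZ(\cC)/\cN$ follows from the equivalence with $\cD$.

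Concerning the caveat you flag: it is a genuine subtlety, but it is not specific to your proof. Read literally, the paper's step ``the kernel contains $\cN$'' is obstructed by exactly the same phenomenon, since a proper tensor ideal with semisimple quotient need not contain $\cN$ when the quotient has a simple object of categorical dimension zero (take the zero ideal in such a category). Both arguments close precisely when all simple objects of $\cD$ have nonzero dimension, equivalently $\cN_\cD=0$; in the intended application $\cD\simeq\cZ(\Rep(S_n))$ is a fusion category over a field of characteristic zero, where nonvanishing of the dimensions of simple objects is a theorem of Etingof--Nikshych--Ostrik, so the lemma is only invoked where your extra hypothesis holds automatically. Two minor further points: Schur's lemma in the form $\End(S)=\Bbbk$ needs $\Bbbk$ algebraically closed or a splitness assumption (otherwise phrase $\cN_\cD=0$ as nondegeneracy of the trace pairing, which again only requires nonzero dimensions), and your partial-trace argument for $\otimes$-closure of $\cN$ implicitly uses sphericality or a consistent one-sided trace, which is harmless since the centers in question are spherical.
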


\begin{proof} The category $\cZ(\cC)$ is a rigid braided category, whose tensor unit is given by the tensor unit in $\cC$ with half-braiding corresponding to identities, so the endomorphism ring of the unit in $\cZ(\cC)$ is $\Bbbk$. Hence, by \cite{Cou}*{Lemma 2.5.3} (see also \cite{AK}*{Proposition 7.1.4}), the ideal formed by all negligible morphisms is the unique maximal tensor ideal in $\cZ(\cC)$.

The kernel of the monoidal functor $\cQ$ is a tensor ideal in $\cZ(\cC)$ giving a semisimple quotient category. Hence, this kernel contains and thus equals the ideal of negligible morphisms by maximality. Note that by construction, $\cZ(\cC)$ is a $\Bbbk$-linear Karoubian pivotal monoidal category whose nilpotent endomorphisms have trace zero, since their images under $\cQ$ have trace zero in the semisimple category $\cD$, and $\cQ$ preserves the trace. Therefore, $\cD$ is equivalent to the semisimplification of $\cZ(\cC)$ in the sense of \cite{EO}*{Section 2.3}. 
\end{proof}

\begin{corollary} \label{cor-semisimplification}
The category $\cZ(\Rep(S_n))$ is the semisimplification of both $\cZ(\uRep(S_n))$ and $\cD_n$, and the tensor ideal of negligible morphisms is the unique maximal tensor ideal in $\cZ(\uRep(S_n))$.
\end{corollary}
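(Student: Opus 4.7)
The strategy is to combine \Cref{maintheorem1} with the preceding lemma. I would first apply the lemma with $\cC = \uRep(S_n)$: this delivers in one stroke both that $\cZ(\Rep(S_n))$ is the semisimplification of $\cZ(\uRep(S_n))$ and that the negligible morphisms form the unique maximal tensor ideal there. For the semisimplification statement on $\cD_n$, I would then observe that the proof of the lemma applies verbatim to $\cD_n$ itself, since $\cD_n$ satisfies the same structural hypotheses.

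To invoke the lemma, I need three inputs. First, $\uRep(S_n)$ is $\Bbbk$-linear and Karoubian by construction, pivotal (in fact symmetric monoidal) as recalled in \Cref{deligne-sect}, and its tensor unit $[0]$ satisfies $\End([0]) = \Bbbk$ (the only partition on the empty set, up to scalars). Second, the quotient functor $\cQ \colon \cZ(\uRep(S_n)) \to \cZ(\Rep(S_n))$ must be essentially surjective and full. This is immediate from \Cref{maintheorem1}: since its restriction to the full subcategory $\cD_n$ is already essentially surjective and full, $\cQ$ itself has these properties. Third, $\cZ(\Rep(S_n))$ is semisimple; this is a standard fact, as $\Rep(S_n)$ is a fusion category over a characteristic zero field, so its Drinfeld center is again a fusion category, in particular semisimple.

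For the claim about $\cD_n$, I would verify that $\cD_n$ inherits from $\cZ(\cR_n)$ exactly the structure used in the lemma's proof: it is $\Bbbk$-linear Karoubian (by its definition as an idempotent completion), pivotal (by the corollary following \Cref{duallemma}), rigid braided monoidal (closure under duals is \Cref{duallemma}, and biproducts and braiding are inherited from $\cZ(\cR_n)$), with $\End(\mathbf{1}) = \Bbbk$ (the unit being $[0]$ with trivial half-braiding, which belongs to $\cD_n$ by \Cref{easypart}). The argument of the lemma --- Couso's result on the unique maximal tensor ideal in a rigid braided category with simple unit, combined with the observation that the kernel of the braided pivotal functor $\cQ|_{\cD_n}$ to a semisimple target must consist of negligibles (nilpotents map to nilpotents, hence to zero-trace morphisms, hence are themselves of trace zero by trace preservation) --- applies without modification with $\cD_n$ in place of $\cZ(\cC)$.

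The main obstacle is essentially verification-level: confirming that each structural hypothesis of the lemma is inherited by $\cD_n$ and, in particular, that the endomorphism ring of the unit in $\cZ(\uRep(S_n))$ (and in $\cD_n$) is $\Bbbk$ so that the lemma is applicable. Once that is in place, no new idea is needed beyond those of the lemma and \Cref{maintheorem1}.
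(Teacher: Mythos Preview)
Your proposal is correct and follows essentially the same approach as the paper: apply the preceding lemma with $\cC=\uRep(S_n)$ (using \Cref{maintheorem1} to supply the essentially surjective full functor), and then rerun the lemma's proof with $\cD_n$ in place of $\cZ(\cC)$ after checking that $\cD_n$ is rigid, braided, pivotal, Karoubian, and has simple unit. The paper's own proof is the one-line ``follows directly from the previous lemma and a parallel argument for $\cD_n$,'' which is exactly what you have unpacked.
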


\begin{proof} This follows directly from the previous lemma and a parallel argument for $\cD_n$.
\end{proof}


\section{Polynomial Interpolation Invariants of  Framed Ribbon Links}\label{invariantsect}

As the main application of this work, we now use $\cD_t$ to associate polynomials in a variable $\mathbf{t}$ to framed ribbon links. These polynomials are invariants of such framed ribbon links.

\subsection{Ribbon Invariants from Ribbon Categories}

It is well-known (see e.g. \cite{TV}*{Section 3.3}) that a ribbon category $\cR$ gives invariants of $\cR$-coloured ribbon diagrams under three Reidemeister moves (where the first Reidermeister move is a weaker version of the classical move from link theory), see \Cref{fig-reide}.

Following   \cite{EGNO}*{Section 8.10} (see also \cite{RT2}*{Section 4}) we consider the category of \emph{framed ribbon tangles}. Let $S_{m,n}$ be a disjoint union of $m$ ribbons $S_1\times [0,1]$ and $n$ squares $[0,1]\times [0,1]$. A smooth non-intersecting embedding of $S_{m,n}$ in $\mR^2\times[0,1]$ is called a \emph{ribbon tangle} if, for any of the squares, it sends the points $[0,1]\times \Set{0}$ to $\mR^2\times \Set{0}$, and $[0,1]\times \Set{1}$ to $\mR^2\times \Set{1}$, and the images of the tubes do not meet $\mR^2\times \Set{0,1}$. A ribbon tangle is \emph{framed} if $S_{n,m}$ has an orientation, hence giving an orientation to its image in $\mR^2\times [0,1]$. We call the line segments of the images of squares which are contained in $\mR^2\times \Set{0}$ the \emph{inputs} and those in $\mR^2\times \Set{1}$ the \emph{outputs} of the tangle. There is a natural structure of a ribbon category, denoted by $\cal{FRT}$, on ribbon tangles. The objects are integers $k,l\geq 0$, and
$$
\Hom_{\cal{FRT}}(k,l)=\bigslant{\Set{\text{framed ribbon tangles with $k$ inputs and $l$ outputs}}}{\text{isotopy}}.
$$
There is a natural composition operation connecting the output and inputs of tangles. The tensor product on $\cal{FRT}$ is given by the addition of integers.  The twist in $\cal{FRT}$ is the twist of an identity strand on the object $1$ as in \Cref{fig-twist}.

\begin{center}
\begin{figure}[hbt]
\begin{align*}
\begingroup%
  \makeatletter%
  \providecommand\color[2][]{%
    \errmessage{(Inkscape) Color is used for the text in Inkscape, but the package 'color.sty' is not loaded}%
    \renewcommand\color[2][]{}%
  }%
  \providecommand\transparent[1]{%
    \errmessage{(Inkscape) Transparency is used (non-zero) for the text in Inkscape, but the package 'transparent.sty' is not loaded}%
    \renewcommand\transparent[1]{}%
  }%
  \providecommand\rotatebox[2]{#2}%
  \newcommand*\fsize{\dimexpr\f@size pt\relax}%
  \newcommand*\lineheight[1]{\fontsize{\fsize}{#1\fsize}\selectfont}%
  \ifx\svgwidth\undefined%
    \setlength{\unitlength}{107.43652857bp}%
    \ifx\svgscale\undefined%
      \relax%
    \else%
      \setlength{\unitlength}{\unitlength * \real{\svgscale}}%
    \fi%
  \else%
    \setlength{\unitlength}{\svgwidth}%
  \fi%
  \global\let\svgwidth\undefined%
  \global\let\svgscale\undefined%
  \makeatother%
  \begin{picture}(1,0.39610816)%
    \lineheight{1}%
    \setlength\tabcolsep{0pt}%
    \put(0,0){\includegraphics[width=\unitlength,page=1]{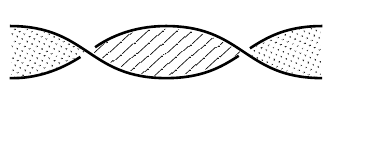}}%
    \put(-0.00772622,0.01368006){\color[rgb]{0,0,0}\makebox(0,0)[lt]{\lineheight{1.25}\smash{\begin{tabular}[t]{l}$0$\end{tabular}}}}%
    \put(0,0){\includegraphics[width=\unitlength,page=2]{twist.pdf}}%
    \put(0.82997774,0.01367995){\color[rgb]{0,0,0}\makebox(0,0)[lt]{\lineheight{1.25}\smash{\begin{tabular}[t]{l}$1$\end{tabular}}}}%
    \put(0,0){\includegraphics[width=\unitlength,page=3]{twist.pdf}}%
  \end{picture}%
\endgroup%

\end{align*}
\caption{The twist in $\cal{FRT}$}
    \label{fig-twist}
\end{figure}
\end{center}

\begin{theorem}[See \cites{FY,RT2,JS1,Sh}]\label{ribboninvariants}
Given any ribbon category $\cR$ and an object $X$ of $\cR$, there exists a canonical monoidal functor
\begin{align*}
\mathrm{Inv}_{X}\colon \cal{FRT}\longrightarrow \cR,
\end{align*}
which sends the object $1$ to the object $X$.
\end{theorem}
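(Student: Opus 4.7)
The plan is to follow the classical approach due to Reshetikhin--Turaev, Joyal--Street, Freyd--Yetter, and Shum: present $\cal{FRT}$ by generators and relations, define the functor on generators using the ribbon structure of $\cR$, and check that every isotopy relation is satisfied in $\cR$. The first step is to recall that any framed ribbon tangle admits a generic projection onto a plane, so that the tangle can be cut by finitely many horizontal slices into elementary pieces, each containing at most one crossing, one cup, one cap, or one twist. This gives a factorization of every morphism of $\cal{FRT}$ as compositions and tensor products of the elementary generators: identity strands, positive and negative crossings, cups, caps, and positive and negative twists.

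Next, the functor is forced on objects by $\mathrm{Inv}_X(n) := X^{\otimes n}$, and I would define it on the elementary generators using the ribbon structure of $\cR$: crossings $\mapsto \Psi_{X,X}^{\pm 1}$, cups $\mapsto \coev_X$, caps $\mapsto \ev_X$, and twists $\mapsto \theta_X^{\pm 1}$. Extension by composition and tensor product then produces a candidate functor, and compatibility with the tensor unit as well as strict preservation of the monoidal product are built into this definition.

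The remaining task is well-definedness: two decompositions of the same framed ribbon tangle must be sent to the same morphism of $\cR$. I would reduce this to verifying a finite list of local moves generating all ambient isotopy relations, namely naturality of the braiding under stacking of disjoint tangles, the Yang--Baxter/braid relation at triple crossings, the snake (zig-zag) identities for $\ev_X$ and $\coev_X$, the framed Reidemeister~I move that identifies a curl with the twist $\theta_X$, and the compatibility of $\theta$ with braiding, duality, and tensor product. Each of these moves is precisely an axiom of a ribbon category in the sense of \Cref{ribbon-sect}, hence holds in $\cR$ by hypothesis. The main obstacle is the topological assertion that this finite list of local moves already generates every ambient isotopy of framed ribbon tangles in $\mR^2\times [0,1]$; this is the content of Shum's coherence theorem for ribbon categories, which is proved via a Morse-theoretic analysis of generic isotopies between tangle diagrams, and I would cite \cites{FY,RT2,JS1,Sh} rather than reproduce that analysis here.
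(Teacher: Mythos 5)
Your outline is the standard Reshetikhin--Turaev/Shum construction and is correct as far as it goes; note that the paper offers no proof of \Cref{ribboninvariants} at all, but simply cites \cites{FY,RT2,JS1,Sh}, so your deferral of the essential step (that the finite list of local moves generates all framed isotopies of ribbon tangles) to those same references matches the paper's treatment exactly. The only cosmetic caveat is that your list of generators should include both left and right (co)evaluations (or express one pair through the braiding and twist), since $\cal{FRT}$ has cups and caps of both orientations.
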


In other words, $\cal{FRT}$ is \emph{free} as a ribbon category. Applying $\mathrm{Inv}_X$ to a framed ribbon tangle $\cL$ with no inputs and outputs, $\mathrm{Inv}_X(\cL)$ gives an element in the base field $\Bbbk$. This element is an \emph{invariant} of $\cL$ under the three Reidemeister moves for ribbons displayed in \Cref{fig-reide}. We call such a framed ribbon tangle with no inputs and outputs a \emph{(framed) ribbon link}, and if it has only one connected component, a \emph{(framed) ribbon knot}.

Given a ribbon link, we obtain a \emph{link} by restricting to $S_1\times \Set{1/2}$. This gives a mapping from the set of ribbon links to that of links which respects equivalence. If the ribbon category $\cR$ has trivial twist on $X$, i.e. $\theta_X=1$, then $\mathrm{Inv}_X$ is an invariant of links.

We note that in \cites{RT2,RT1} (see also \cite{TV}*{Section 15.1.3}) a more general category of \emph{coloured} framed ribbon tangles (called HCDR-graphs) is defined, where, in addition, labels by objects and morphisms from a ribbon category appear. Ribbon categories also associate invariants to such coloured tangles.

\begin{figure}[hbt]
\begin{gather*}
~\vcenter{\hbox{
\begingroup%
  \makeatletter%
  \providecommand\color[2][]{%
    \errmessage{(Inkscape) Color is used for the text in Inkscape, but the package 'color.sty' is not loaded}%
    \renewcommand\color[2][]{}%
  }%
  \providecommand\transparent[1]{%
    \errmessage{(Inkscape) Transparency is used (non-zero) for the text in Inkscape, but the package 'transparent.sty' is not loaded}%
    \renewcommand\transparent[1]{}%
  }%
  \providecommand\rotatebox[2]{#2}%
  \newcommand*\fsize{\dimexpr\f@size pt\relax}%
  \newcommand*\lineheight[1]{\fontsize{\fsize}{#1\fsize}\selectfont}%
  \ifx\svgwidth\undefined%
    \setlength{\unitlength}{148.07739902bp}%
    \ifx\svgscale\undefined%
      \relax%
    \else%
      \setlength{\unitlength}{\unitlength * \real{\svgscale}}%
    \fi%
  \else%
    \setlength{\unitlength}{\svgwidth}%
  \fi%
  \global\let\svgwidth\undefined%
  \global\let\svgscale\undefined%
  \makeatother%
  \begin{picture}(1,0.42546464)%
    \lineheight{1}%
    \setlength\tabcolsep{0pt}%
    \put(0,0){\includegraphics[width=\unitlength,page=1]{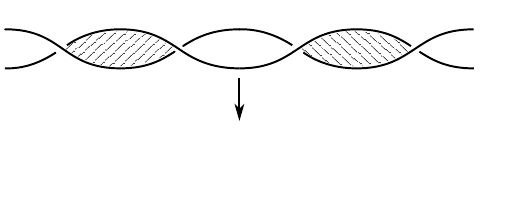}}%
    \put(-0.00420428,0.00744405){\color[rgb]{0,0,0}\makebox(0,0)[lt]{\lineheight{1.25}\smash{\begin{tabular}[t]{l}$0$\end{tabular}}}}%
    \put(0,0){\includegraphics[width=\unitlength,page=2]{Reidemeister1s.pdf}}%
    \put(0.90748115,0.00744405){\color[rgb]{0,0,0}\makebox(0,0)[lt]{\lineheight{1.25}\smash{\begin{tabular}[t]{l}$1$\end{tabular}}}}%
    \put(0,0){\includegraphics[width=\unitlength,page=3]{Reidemeister1s.pdf}}%
  \end{picture}%
\endgroup%
}}
~ \vcenter{\hbox{
\begingroup%
  \makeatletter%
  \providecommand\color[2][]{%
    \errmessage{(Inkscape) Color is used for the text in Inkscape, but the package 'color.sty' is not loaded}%
    \renewcommand\color[2][]{}%
  }%
  \providecommand\transparent[1]{%
    \errmessage{(Inkscape) Transparency is used (non-zero) for the text in Inkscape, but the package 'transparent.sty' is not loaded}%
    \renewcommand\transparent[1]{}%
  }%
  \providecommand\rotatebox[2]{#2}%
  \newcommand*\fsize{\dimexpr\f@size pt\relax}%
  \newcommand*\lineheight[1]{\fontsize{\fsize}{#1\fsize}\selectfont}%
  \ifx\svgwidth\undefined%
    \setlength{\unitlength}{148.36150539bp}%
    \ifx\svgscale\undefined%
      \relax%
    \else%
      \setlength{\unitlength}{\unitlength * \real{\svgscale}}%
    \fi%
  \else%
    \setlength{\unitlength}{\svgwidth}%
  \fi%
  \global\let\svgwidth\undefined%
  \global\let\svgscale\undefined%
  \makeatother%
  \begin{picture}(1,0.50047818)%
    \lineheight{1}%
    \setlength\tabcolsep{0pt}%
    \put(0,0){\includegraphics[width=\unitlength,page=1]{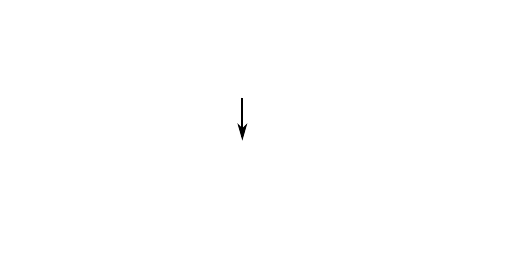}}%
    \put(-0.00419623,0.00742979){\color[rgb]{0,0,0}\makebox(0,0)[lt]{\lineheight{1.25}\smash{\begin{tabular}[t]{l}$0$\end{tabular}}}}%
    \put(0,0){\includegraphics[width=\unitlength,page=2]{Reidemeister2s.pdf}}%
    \put(0.90765832,0.00742979){\color[rgb]{0,0,0}\makebox(0,0)[lt]{\lineheight{1.25}\smash{\begin{tabular}[t]{l}$1$\end{tabular}}}}%
    \put(0,0){\includegraphics[width=\unitlength,page=3]{Reidemeister2s.pdf}}%
  \end{picture}%
\endgroup%
}}~
\vcenter{\hbox{
\begingroup%
  \makeatletter%
  \providecommand\color[2][]{%
    \errmessage{(Inkscape) Color is used for the text in Inkscape, but the package 'color.sty' is not loaded}%
    \renewcommand\color[2][]{}%
  }%
  \providecommand\transparent[1]{%
    \errmessage{(Inkscape) Transparency is used (non-zero) for the text in Inkscape, but the package 'transparent.sty' is not loaded}%
    \renewcommand\transparent[1]{}%
  }%
  \providecommand\rotatebox[2]{#2}%
  \newcommand*\fsize{\dimexpr\f@size pt\relax}%
  \newcommand*\lineheight[1]{\fontsize{\fsize}{#1\fsize}\selectfont}%
  \ifx\svgwidth\undefined%
    \setlength{\unitlength}{148.07739902bp}%
    \ifx\svgscale\undefined%
      \relax%
    \else%
      \setlength{\unitlength}{\unitlength * \real{\svgscale}}%
    \fi%
  \else%
    \setlength{\unitlength}{\svgwidth}%
  \fi%
  \global\let\svgwidth\undefined%
  \global\let\svgscale\undefined%
  \makeatother%
  \begin{picture}(1,0.5774122)%
    \lineheight{1}%
    \setlength\tabcolsep{0pt}%
    \put(0,0){\includegraphics[width=\unitlength,page=1]{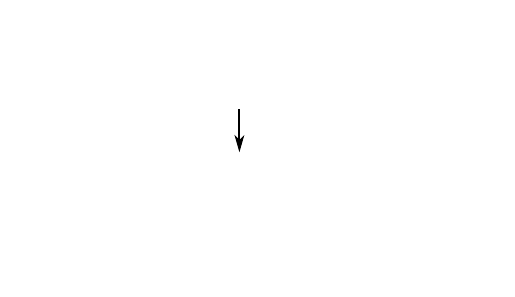}}%
    \put(-0.00420428,0.00744405){\color[rgb]{0,0,0}\makebox(0,0)[lt]{\lineheight{1.25}\smash{\begin{tabular}[t]{l}$0$\end{tabular}}}}%
    \put(0,0){\includegraphics[width=\unitlength,page=2]{Reidemeister3s.pdf}}%
    \put(0.90748115,0.00744405){\color[rgb]{0,0,0}\makebox(0,0)[lt]{\lineheight{1.25}\smash{\begin{tabular}[t]{l}$1$\end{tabular}}}}%
    \put(0,0){\includegraphics[width=\unitlength,page=3]{Reidemeister3s.pdf}}%
  \end{picture}%
\endgroup%
}}
\end{gather*}
\caption{Three ribbon Reidemeister moves}
    \label{fig-reide}
\end{figure}

\subsection{Dijkgraaf--Witten Invariants}

Let $\cL\subset \mR^3$ be a \emph{tame} link. That is, the smooth embedding defining $\cL$ can be thickened to a tubular neighborhood, giving an embedding $S_1\times D_2\to \mR^3$, for a small ball $D_2$, with image $\ov{\cL}\subset \mR^3$.  Then the complement $\mR^3\setminus \ov{\cL}$ is a compact $3$-manifold. In fact, any compact $3$-manifold arises via such a \emph{surgery presentation}. The \emph{link group} of $\cL$ is $G_{\cL}:=\pi_1(\mR^3\setminus \ov{\cL})$.

In general, Dijkgraaf--Witten theory is based on a finite group $G$ together with a $3$-cocycle $\omega$. From this datum, one defines a quasi-Hopf algebra $\Drin^\omega G$ which is a twist of the Drinfeld double of $G$ \cite{DPR}. Dijkgraaf--Witten theory is a fully extended TQFT. Hence, in particular, it provides invariants for closed $3$-manifolds. These are related to invariants of links, see \cite{AC}.

 We shall restrict to the \emph{untwisted} Dijkgraaf--Witten theory, where $\omega$ is the trivial $3$-cocycle and $\Drin^\omega G =\Drin G $. In this case, the $3$-manifold invariant $Z^{\mathrm{DW}}_G(C)$ associated to $C=\mR^3\setminus \ov{\cL}$ is given by
\begin{align}
Z^{\mathrm{DW}}_G(C)=\frac{1}{|G|}\op{Inv}^{\mathrm{DW}}_G(\cL),
\end{align}
where $\op{Inv}^{\mathrm{DW}}_G(\cL)=|\Hom_{\mathrm{group}}(G_\cL,G)|$ is the number of group homomorphisms from the link group to $G$. As the notation suggests, $\op{Inv}^{\mathrm{DW}}_G(\cL)$ is an invariant of links.

The TQFT $Z_G^{\mathrm{DW}}$ can be computed from the representation theory of $\Drin G$ \cites{AC,FQ,Fre2}. Let $D=\Drin G^{\reg}$ denote the regular $\Drin G $-module. 
Then \Cref{ribboninvariants} provides the link invariants as $\op{Inv}_D(\cL)=\op{Inv}^{\mathrm{DW}}_G(\cL)$, see \cite{TV}*{Appendix~H} and \cites{Fre2, Che}.
 
The main application of the present work is to replace the module category of the group $G=S_n$ by Deligne's category $\uRep(S_t)$ and compute, by means of the ribbon categories $\cD_t$, invariants of ribbon links. This way, we obtain new ribbon link polynomials.

\subsection{A Lift of the Regular Module}

To provide lifts of untwisted Dijkgraaf--Witten invariants, we will now lift the regular module of $\Drin G$ to $\cD_t$. 
In \Cref{regulardec-lemma} we recalled that the regular $\Drin S_n$-module decomposes as a direct sum of modules $V^\mu$, indexed by partitions $\mu\vdash n$.
For given $n$ and a partition $\mu\vdash n$, we will construct an object $\uV{\mu}$ in $\cZ(\uRep(S_t))$, for any $t$, such that for $t=n$, we have $\cF_n(\uV{\mu})\cong V^{\mu}$. Hence, the sum of all $\uV{\mu}$ together gives a lift of the regular $\Drin S_n$-module to an object in $\cZ(\uRep(S_t))$.

\begin{definition}\label{uVmu-def}
Let $\sigma\in S_n$ be an element of cycle type $\mu$, $Z=Z(\sigma)$ its centralizer. We define
\begin{equation}
e_\mu=\frac{1}{|Z|}\sum_{z\in Z}x_z\otimes x_1.
\end{equation}
Then $e_\mu$ is an idempotent in $\End([n]\otimes [n])$ which is independent of the choice of $\sigma$.
Further, consider $d_1$ from \Cref{d1dash}. We denote
\begin{align}
d_1^\mu=(1_n\otimes \Psi_{[1],[n]})(d_1\otimes 1_{n})(1_n\otimes \Psi_{[n],[1]})(e_\mu\otimes 1),
\end{align}
and define $\uV{\mu}=([n]\otimes [n],e_\mu)$. Further, define $\uD{n}:=\oplus_{\mu\vdash n}\uV{\mu}$.
\end{definition}

\begin{lemma}\label{Vmu-lift}
The object $\uV{\mu}$, together with the half-braiding obtained from $d_1^{\mu}$ using Equations (\ref{inducedc})--(\ref{d1}) defines an object in $\cZ(\cR_t)$. There is an isomorphism $\cF_n(\uV{\mu})\cong V^\mu$ in $\cZ(\Rep(S_n))$. Hence, $\cF_n(\uD{n})$ is isomorphic to the regular $\Drin S_n$-module by \Cref{regulardec-lemma}.
\end{lemma}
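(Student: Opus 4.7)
The plan is to deduce this lemma from \Cref{halfbraiding-prop} together with the tensor product structure on $\cZ(\cR_t)$, by recognizing $\uV{\mu}$ as the tensor product $\uW{\sigma,\triv}\otimes([n],x_{1_n})^{\Psi}$. Here $\triv$ denotes the trivial $1$-dimensional character of $Z=Z(\sigma)$, and $([n],x_{1_n})^{\Psi}$ is the object $([n],x_{1_n})$ of $\cR_t$ regarded as an object of $\cZ(\cR_t)$ via the symmetric half-braiding of \Cref{easypart}. First I would observe that
\[
e_\mu \;=\; e_{\sigma,\triv}\otimes x_{1_n},
\]
where $e_{\sigma,\triv}$ is the idempotent of \Cref{e-def} for $k=1$ and $\rho\equiv 1$. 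Both tensor factors are idempotents (by \Cref{e-idemp} and by $x_{1_n}^2=x_{1_n}$, respectively), hence so is $e_\mu$. This realizes $\uV{\mu}=([n]\otimes[n],e_\mu)$ as the tensor product in $\cR_t$ of the underlying objects of $\uW{\sigma,\triv}$ and $([n],x_{1_n})$, so that $\uW{\sigma,\triv}\otimes([n],x_{1_n})^{\Psi}$ is an object of $\cZ(\cR_t)$ whose underlying $\cR_t$-object is $\uV{\mu}$. It remains to identify its half-braiding at $[1]$ (translated to an endomorphism of $\uV{\mu}\otimes[1]$ via the bijection of \Cref{d1}) with $d_1^\mu$: using the tensor product formula $c^{X\otimes Y}_{[1]}=(c^X_{[1]}\otimes 1_Y)(1_X\otimes c^Y_{[1]})$ together with $c^{\uW{\sigma,\triv}}_{[1]}=\Psi_{[n],[1]}d_1^{\triv}$, $c^{([n],x_{1_n})^{\Psi}}_{[1]}=\Psi_{[n],[1]}(x_{1_n}\otimes 1)$, and the symmetric-category identities $\Psi_{[n]\otimes[n],[1]}=(\Psi_{[n],[1]}\otimes 1_n)(1_n\otimes\Psi_{[n],[1]})$ and $\Psi_{[n],[1]}\Psi_{[1],[n]}=\ide$, this is a direct manipulation recovering the formula of \Cref{uVmu-def}.

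For the identification $\cF_n(\uV{\mu})\cong V^\mu$, monoidality of $\cF_n$ reduces the computation to the two tensor factors. By \Cref{inducedlift-prop} (applied with $\rho=\triv$) and \Cref{rem-image-of-e-rho}, $\cF_n(\uW{\sigma,\triv})\cong \Ind_Z^{S_n}(\triv)\cong\Bbbk[S_n/Z]$, and by \Cref{rem-f-x-g-n}, $\cF_n(([n],x_{1_n}))\cong \Bbbk S_n^{\reg}$. The bijection $gZ\leftrightarrow g\sigma g\inv$ furnishes an $S_n$-module isomorphism $\Bbbk[S_n/Z]\cong\mu^{S_n}$, so $\cF_n(\uV{\mu})\cong\mu^{S_n}\otimes\Bbbk S_n^{\reg}=V^\mu$ as $S_n$-modules. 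To match the half-braidings, I would apply $\cF_n$ to $d_1^\mu$ using \Cref{rem-f-x-g-n} and the explicit formula (a direct consequence of \Cref{piimage}) $\cF_n(E^i_j)(v_{\mathbf{i}'}\otimes v_k)=\delta_{k,i_i'}\,v_{\mathbf{i}'}\otimes v_{i_j'}$: for distinct tuples $\mathbf{i},\mathbf{j}$ and $g\in S_n$ defined by $g(p)=i_p$, one obtains
\[
\cF_n(d_1^\mu)(v_\mathbf{i}\otimes v_\mathbf{j}\otimes v_k)\;=\;\cF_n(e_\mu)(v_\mathbf{i}\otimes v_\mathbf{j})\otimes v_{g\sigma g\inv(k)},
\]
the conjugate $g\sigma g\inv$ being invariant under the $z\in Z$ that appears in the averaging since $z\sigma=\sigma z$. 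Under the identifications above this matches $\Psi_{V_n,V^\mu}\circ c_{V_n}$ from \Cref{ch} exactly, so the half-braidings agree. The final claim then follows from \Cref{regulardec-lemma}: $\cF_n(\uD{n})=\bigoplus_{\mu\vdash n}\cF_n(\uV{\mu})\cong\bigoplus_\mu V^\mu\cong V^{\reg}$.

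The conceptual core of the argument is the tensor product decomposition $\uV{\mu}\cong\uW{\sigma,\triv}\otimes([n],x_{1_n})^{\Psi}$ in $\cZ(\cR_t)$, which reduces both parts of the lemma to already-established facts about $\uW{\sigma,\triv}$ and the symmetric part of $\cR_t$. The main technical care goes into the final half-braiding match, which is pure bookkeeping once one tracks the correspondence $v_\mathbf{i}\leftrightarrow g$ under $\cF_n$ via \Cref{rem-f-x-g-n}.
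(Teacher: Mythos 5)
Your proposal is correct and follows essentially the same route as the paper: the paper's (much terser) proof rests on exactly the decomposition $\uV{\mu}\cong\uW{\sigma,\triv}\otimes([n],x_{1_n})$ with the symmetric half-braiding of \Cref{easypart}. Your additional explicit verifications of $e_\mu=e_{\sigma,\triv}\otimes x_{1_n}$, of the induced half-braiding formula, and of $\cF_n(\uV{\mu})\cong\mu^{S_n}\otimes\Bbbk S_n^{\reg}=V^\mu$ via \Cref{inducedlift-prop}, \Cref{rem-f-x-g-n} and the bijection $gZ\leftrightarrow g\sigma g\inv$ simply spell out what the paper leaves implicit.
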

\begin{proof}
The object $\uV{\mu}$ is the tensor product of $\uW{\sigma,\triv}$, where $\triv$ indicates the idempotent obtained from the trivial $Z$-module, and $([n],x_{1_n})$ (with trivial braiding, cf. \Cref{easypart}). This object is contained in $\cD_t$ since $[n]$ with trivial braiding is an object in $\cD_t^0$, and $x_{1_n}$ is an idempotent in $\cD_t^0$.
\end{proof}

We observe that by the computation in \Cref{ribbon-thm}, the objects $\uV{\mu}$ and $\uD{n}$ have trivial twists since
$$\theta_{\uV{\mu}}=(\sigma^{-1} \otimes 1_n)e_\mu=\frac{1}{|Z|}\sum_{z\in Z}x_z\sigma^{-1}\otimes x_1=e_\mu,$$
using reordering, $\sigma^{-1}\in Z$, and $x_z\sigma^{-1}=x_{z\sigma^{-1}}$. Hence, $\uV{\mu}$ and $\uD{n}$ provide link invariants.

\subsection{Ribbon Link Polynomials}

In this section, we define the (framed) ribbon link polynomials associated to objects of $\cD_t$.

We start by observing that Deligne's category $\uRep(S_{\mathbf{t}})$ --- together with the interpolation objects and the ribbon category $\cD_{\mathbf{t}}$ --- can be defined for a freely adjoint variable $\mathbf{t}$. 
In this version, we obtain a $\Bbbk[\mathbf{t}]$-linear ribbon category $\cD_{\mathbf{t}}$, and the category $\cD_t$ arise by specifying $\mathbf{t}\mapsto t\in \Bbbk$. The resulting link invariants obtained via \Cref{ribboninvariants} from $\cD_{\mathbf{t}}$ are now polynomials in $\mathbf{t}$.
It is a consequence of \Cref{ribbon-thm} that the category $\cD_{\mathbf{t}}$ provides invariants of ribbon links $\cL\subset \mR^3$.

\begin{definition}Let $\sigma\in S_n$ have cycle type $\mu$, 
let $\uW{\sigma,\rho}$ denote an interpolation object in $\cD_{\mathbf{t}}$ as in \Cref{intobject-def} and $\uD{n}$ an object as in \Cref{uVmu-def}, and let $\cL$ be a ribbon link. We denote
\begin{gather}
    \rP_{\mu,\rho}(\cL,{\mathbf{t}}):=\op{Inv}_{\uW{\sigma,\rho}}(\cL),\\
        \rP_{n}(\cL,{\mathbf{t}}):=\op{Inv}_{\uD{n}}(\cL).
\end{gather}
We refer to $\rP_{\mu,\rho}(\cL,\mathbf{t})$, $\rP_{n}(\cL,\mathbf{t})$ as \emph{ribbon link polynomials} associated to $(\mu, \rho)$, respectively, $S_n$.
\end{definition}

By \Cref{equivalence-conj}, $\rP_{\mu,\rho}$ is independent of the choice of $\sigma$ and $\rho$ up to conjugation.
It follows directly that evaluating the link polynomials at a specific value ${\mathbf{t}}\mapsto t\in \Bbbk$ gives the link invariants $\op{Inv}(\cL)$ for the corresponding object viewed in $\cD_t$. From Theorem \ref{ribbon-thm} we conclude:

\begin{corollary}
The polynomials $\rP_{\mu,\rho}(\cL,{\mathbf{t}})$ are invariants of ribbon links and the polynomials  $\rP_{n}(\cL,{\mathbf{t}})$ are invariants of links.
\end{corollary}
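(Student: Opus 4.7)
The plan is to reduce both claims directly to \Cref{ribboninvariants} applied to the ribbon category $\cD_{\mathbf{t}}$ over $\Bbbk[\mathbf{t}]$, using the trivial-twist computation that was already carried out after \Cref{Vmu-lift}.

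First I would observe that the entire construction of \Cref{delignecenter-section} goes through with the formal parameter $\mathbf{t}$ in place of a scalar: the category $\uRep(S_\mathbf{t})$ is $\Bbbk[\mathbf{t}]$-linear, the idempotents $e_{\sigma,\rho}$ and morphisms $d_1^{\sigma,\rho}$ are defined over $\Bbbk$, and the proofs of \Cref{halfbraiding-prop} and \Cref{ribbon-thm} are purely diagrammatic, so they establish without change that $\cD_{\mathbf{t}}$ is a ribbon category whose twist on $\uW{\sigma,\rho}$ is given by the same formula $\theta_{\uW{\sigma,\rho}}=(\sigma^{-1})^{\oplus k} e_\rho$. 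Once this is in place, \Cref{ribboninvariants} supplies, for each object $X$ of $\cD_{\mathbf{t}}$, a monoidal functor $\mathrm{Inv}_X\colon \cal{FRT}\to \cD_{\mathbf{t}}$, and applied to a ribbon link $\cL$ (i.e., a morphism $0\to 0$ in $\cal{FRT}$) it yields an element of $\End_{\cD_{\mathbf{t}}}(\mathbf{1})=\Bbbk[\mathbf{t}]$. Taking $X=\uW{\sigma,\rho}$ produces $\rP_{\mu,\rho}(\cL,\mathbf{t})$ and taking $X=\uD{n}$ produces $\rP_n(\cL,\mathbf{t})$, so both are invariants of framed ribbon links by construction. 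This already proves the first half of the corollary.

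To upgrade $\rP_n(\cL,\mathbf{t})$ to an honest link invariant, I need to show that $\rP_n$ is also invariant under the first (unframed) Reidemeister move, which by the discussion of $\cal{FRT}$ amounts to showing that the twist $\theta_{\uD{n}}$ acts as the identity. This was essentially done already: using the twist formula from \Cref{ribbon-thm} applied to $\uV{\mu}=\uW{\sigma,\triv}\otimes ([n],x_{1_n})$, together with the fact that $\sigma^{-1}\in Z(\sigma)$ and the reordering identity $x_z\sigma^{-1}=x_{z\sigma^{-1}}$, one obtains $\theta_{\uV{\mu}}=e_\mu = \ide_{\uV{\mu}}$. Since twists commute with finite direct sums in any ribbon category, $\theta_{\uD{n}}=\bigoplus_{\mu\vdash n} \theta_{\uV{\mu}} = \ide_{\uD{n}}$. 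Hence $\mathrm{Inv}_{\uD{n}}$ descends through the quotient of $\cal{FRT}$ by the first Reidemeister move, giving a genuine link invariant $\rP_n(\cL,\mathbf{t})$.

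There is no real obstacle here: the only point that requires a line of care is making sure that the base change from $\Bbbk$ to $\Bbbk[\mathbf{t}]$ preserves the ribbon structure, i.e., that the twist formula and the rigidity of \Cref{duallemma} are polynomial in $\mathbf{t}$ and therefore valid generically. Once this is noted, the corollary is an immediate consequence of the universal property of $\cal{FRT}$ as the free ribbon category, combined with the trivial-twist computation for $\uD{n}$.
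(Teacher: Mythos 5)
Your argument is correct and follows essentially the same route as the paper: \Cref{ribbon-thm} together with the universal property of $\cal{FRT}$ from \Cref{ribboninvariants} gives the framed ribbon link invariants $\rP_{\mu,\rho}(\cL,\mathbf{t})$ over the $\Bbbk[\mathbf{t}]$-linear category $\cD_{\mathbf{t}}$, and the trivial-twist computation $\theta_{\uV{\mu}}=e_\mu$ (extended additively to $\uD{n}$) upgrades $\rP_n(\cL,\mathbf{t})$ to a link invariant, exactly as in the paper. Your extra remarks on the base change to $\Bbbk[\mathbf{t}]$ and on twists commuting with direct sums only make explicit what the paper leaves implicit.
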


\subsection{Relation to Untwisted Dijkgraaf--Witten Invariants}\label{untwistedrel}

We start with a general observation about monoidal functors preserving invariants of ribbon links.
Then, using the functor $\cF_n$ from \Cref{functorcenter}, we explain the relationship between the ribbon link polynomials from the previous section and untwisted Dijkgraaf--Witten invariants.

\begin{lemma}
Let $\cR$, $\cS$ be $\Bbbk$-linear ribbon categories such that $\End_{\cS}(I)\cong \Bbbk$, and $\cF\colon \cR\to \cS$  be a braided monoidal functor preserving duals. Then for any ribbon link $\cL$,
\begin{align}
\cF(\mathrm{Inv}_X(\cL))=\mathrm{Inv}_{\cF(X)}(\cL).
\end{align}
\end{lemma}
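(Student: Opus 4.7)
The plan is to invoke the universal property of the category $\cal{FRT}$ of framed ribbon tangles as a free ribbon category, stated in \Cref{ribboninvariants}. The statement is essentially a tautology once one verifies that $\cF$ really does respect \emph{all} of the ribbon structure, and not merely the braiding and the dualities.

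First, I would verify that the hypotheses on $\cF$ — $\Bbbk$-linear, braided monoidal, and dual-preserving — force $\cF$ to be a ribbon functor, i.e., to preserve the twists. In a pivotal braided category, the twist $\theta_X$ is defined (see \Cref{ribbon-sect}) as an explicit composition built from the braiding $\Psi_{X,X}$, the evaluations $\ev^l_X,\ev^r_X$, and the coevaluations $\coev^l_X,\coev^r_X$. Since $\cF$ respects each of these ingredients, it respects $\theta_X$ as well; thus $\cF(\theta_X)=\theta_{\cF(X)}$.

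Next, I would form the composite functor $\cF\circ\mathrm{Inv}_X\colon\cal{FRT}\to\cS$. By the previous paragraph, this is a braided monoidal, duality- and twist-preserving functor sending the generating object $1\in\cal{FRT}$ to $\cF(X)\in\cS$. The universal property of $\cal{FRT}$ (\Cref{ribboninvariants}) guarantees that any such functor is uniquely determined up to monoidal natural isomorphism by the image of the generator $1$. Hence there is a natural monoidal isomorphism $\cF\circ\mathrm{Inv}_X\cong\mathrm{Inv}_{\cF(X)}$.

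Finally, a ribbon link $\cL$ is a morphism $0\to 0$ in $\cal{FRT}$, so each side of the desired equation is an endomorphism of a tensor unit: $\mathrm{Inv}_X(\cL)\in\End_\cR(I_\cR)$ and $\mathrm{Inv}_{\cF(X)}(\cL)\in\End_\cS(I_\cS)\cong\Bbbk$. Using the monoidal structure isomorphism $\cF(I_\cR)\cong I_\cS$ to identify $\cF(\mathrm{Inv}_X(\cL))$ with an element of $\End_\cS(I_\cS)$, and evaluating the natural isomorphism from the previous step on $\cL$, yields the equality $\cF(\mathrm{Inv}_X(\cL))=\mathrm{Inv}_{\cF(X)}(\cL)$ of scalars in $\Bbbk$. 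The only real content beyond bookkeeping is the observation that braided plus dual-preserving implies twist-preserving; the assumption $\End_\cS(I)\cong\Bbbk$ is used solely to interpret both sides of the equation as elements of the base field.
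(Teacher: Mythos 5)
Your proof is correct and is essentially the argument the paper intends: the lemma is stated there without proof as an immediate consequence of the freeness of $\cal{FRT}$ (Theorem \ref{ribboninvariants}), exactly the route you take. The only point worth making explicit is that ``preserving duals'' must be read as preserving both the left and right duality data (the pivotal structure), since a braided functor preserving only left duals need not preserve twists; this is how the paper uses the lemma, its functor $\cQ$ being braided \emph{pivotal}, and it is precisely what validates your twist-preservation step.
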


Using this observation, together with \Cref{Vmu-lift}, we obtain the following result.

\begin{corollary}\label{DWinterpolation}
Let $\cL$ be a ribbon link. Then
evaluating the ribbon link polynomials at ${\mathbf{t}}=n$ recovers the untwisted Dijkgraaf--Witten invariants associated to $S_n$. In particular,
\begin{equation}
\rP_{\mu,\rho}(\cL,n)=\op{Inv}_{W_{\mu,\rho}}(\cL),\qquad\text{and}\qquad \rP_n(\cL,n)=\op{Inv}^{\mathrm{DW}}_{S_n}(\cL).
\end{equation}
\end{corollary}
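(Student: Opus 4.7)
The plan is to combine three ingredients already established in the paper: the general fact that a braided monoidal functor between ribbon categories preserving duals intertwines the Reshetikhin--Turaev-type invariants (the lemma immediately preceding the corollary); the identification of the image of the interpolation objects under $\cF_n \cong \cQ$ with the classical $S_n$-crossed modules $W_{\mu,\rho}$ respectively with the regular $\Drin S_n$-module; and the known description of the untwisted Dijkgraaf--Witten invariants via the regular $\Drin S_n$-module.

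First I would observe that specialising $\mathbf{t}\mapsto n$ in $\cD_{\mathbf{t}}$ gives the $\Bbbk$-linear ribbon category $\cD_n$, and under this specialisation the object $\uW{\sigma,\rho}$ (respectively $\uD{n}$) considered over $\Bbbk[\mathbf{t}]$ becomes the corresponding object of $\cD_n$. Consequently, by construction of the polynomial invariants,
\begin{equation*}
\rP_{\mu,\rho}(\cL,n) = \op{Inv}_{\uW{\sigma,\rho}}(\cL) \in \Bbbk,
\qquad
\rP_{n}(\cL,n) = \op{Inv}_{\uD{n}}(\cL) \in \Bbbk,
\end{equation*}
where on the right-hand sides the objects now live in $\cD_n \subset \cZ(\uRep(S_n))$.

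Next I would apply the preceding lemma to the braided pivotal functor $\cQ\colon \cZ(\uRep(S_n))\to \cZ(\Rep(S_n))$ from \Cref{centerlifts-prop}, restricted to the ribbon subcategory $\cD_n$. By \Cref{maintheorem1} this functor is braided and pivotal, and since $\cZ(\Rep(S_n))$ is rigid with $\End(I)\cong \Bbbk$, the hypotheses of the lemma are satisfied. Therefore
\begin{equation*}
\op{Inv}_{\uW{\sigma,\rho}}(\cL) = \op{Inv}_{\cQ(\uW{\sigma,\rho})}(\cL),
\qquad
\op{Inv}_{\uD{n}}(\cL) = \op{Inv}_{\cQ(\uD{n})}(\cL).
\end{equation*}
By the proposition identifying $\cF_n(\uW{\mu,\rho})\cong W_{\mu,\rho}$ and by \Cref{Vmu-lift}, which gives $\cF_n(\uD{n}) \cong \bigoplus_{\mu\vdash n} V^{\mu} \cong \Drin S_n^{\reg}$ as $\Drin S_n$-modules, the right-hand sides become $\op{Inv}_{W_{\mu,\rho}}(\cL)$ and $\op{Inv}_{\Drin S_n^{\reg}}(\cL)$ respectively.

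Finally, it only remains to invoke the recalled fact that the untwisted Dijkgraaf--Witten ribbon invariant coincides with the link invariant produced by the regular $\Drin S_n$-module, i.e.\ $\op{Inv}_{\Drin S_n^{\reg}}(\cL) = \op{Inv}^{\mathrm{DW}}_{S_n}(\cL)$, which is the content of the description of untwisted Dijkgraaf--Witten theory through $\Rep(\Drin S_n)$ cited from \cites{AC,FQ,Fre2} in the paragraph preceding the statement. Chaining the equalities yields the two claimed identities. The only mildly non-trivial step is verifying that the lemma on functors preserving link invariants indeed applies to $\cQ|_{\cD_n}$; this is essentially formal since $\cQ$ is a braided pivotal functor between ribbon categories and $\End_{\cZ(\Rep(S_n))}(I)\cong \Bbbk$, but it is the conceptual hinge of the argument.
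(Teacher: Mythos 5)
Your proposal is correct and follows essentially the same route as the paper: the paper likewise deduces the corollary by specializing $\mathbf{t}=n$, applying the preceding lemma on braided dual-preserving functors to $\cQ$ (i.e.\ the extension of $\cF_n$ to the centers) restricted to $\cD_n$, and combining this with \Cref{Vmu-lift} and the identification of the untwisted Dijkgraaf--Witten invariant with $\op{Inv}$ of the regular $\Drin S_n$-module. Your spelled-out verification that the lemma's hypotheses hold for $\cQ|_{\cD_n}$ is exactly the (formal) content the paper leaves implicit.
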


\subsection{Examples of Link Polynomials}\label{invariantexamples}

In this section, we give examples of ribbon link polynomials. The computations were carried out using \emph{Wolfram Mathematica}\textsuperscript{\textregistered}. Given an element $\sigma\in S_n$ of cycle type $\mu$, $\rho\colon Z\to M_k(\Bbbk)$ satisfying \Cref{eq-rho}, and a (framed) ribbon Link $\cL$, recall that the associated ribbon link polynomial is denoted by $\rP_{\mu,\rho}(\cL,\mathbf{t})$.

We will restrict to the computation of invariants for \emph{ribbon torus links}. For this, denote by $S$ the generating object of the category $\mathcal{FRT}$. Given a pair of integers $(p,q)$, with $p\geq 1$, the $(p,q)$-ribbon torus link is defined as
\begin{align*}
  \cT_{p,q}= \ev^r_{S^{\otimes p}} (\Psi_{S,S^{\otimes (p-1)}}^{-q}\otimes \ide_{S^*}^{\otimes p})\coev^l_{S^{\otimes p}}.
\end{align*}
That is, $\cT_{p,q}$ is the braid closure of $p$ bands, which are braided $-q$ times using the crossing where the left-most band crosses above the other bands if $q\leq 0$ (or $q$ times crossing below the other bands if $q\geq 0$).
In this section all knot pictures are understood to have thick untwisted bands as strings.

\subsubsection*{\texorpdfstring{$2$}{2}-Cycle Invariants}

We now let $\mu=(2)$, $\sigma =(1,2)\in S_2$, and consider the associated invariants for small torus knots. In \Cref{S2table} all polynomials are divided by the dimension of the object, which is $\dim \uW{(2),\rho}=\tfrac{1}{2}\mathbf{t}(\mathbf{t}-1)$, for $\rho=\triv$ or $\mathrm{sign}$, where $\triv(12)=1$, $\mathrm{sign}(12)=-1$.

\begin{table}
{\renewcommand{\arraystretch}{1.5}
\begin{tabular}[htb]{c|c|c}
Ribbon torus link $\cT$&$\frac{\rP_{(2),\triv}(\cT,\mathbf{t})}{\dim \uW{(2),\triv}}$&$\frac{\rP_{(2),\mathrm{sign}}(\cT,\mathbf{t})}{\dim \uW{(2),\mathrm{sign}}}$\\[5pt] \hline
$\cT_{2,-2}=\torusknotinv{2}{2}$& $\frac{\mathbf{t}^2}{2}-\frac{5 \mathbf{t}}{2}+4$&$\frac{\mathbf{t}^2}{2}-\frac{5 \mathbf{t}}{2}+4$\\
$\cT_{2,-3}=\torusknotinv{2}{3}$&$2\mathbf{t}-3$& $-2\mathbf{t}+3$\\
$\cT_{2,-6}=\torusknotinv{2}{6}$&$\frac{\mathbf{t}^2}{2}-\frac{\mathbf{t}}{2}$&$\frac{\mathbf{t}^2}{2}-\frac{\mathbf{t}}{2}$\\
$\cT_{3,-4}=\torusknotinv{3}{4}$&$2 \mathbf{t}^2-8 \mathbf{t}+9$&$2 \mathbf{t}^2-8 \mathbf{t}+9$\\
$\cT_{4,-5}=\torusknotinv{4}{5}$&$2 \mathbf{t}^3-18 \mathbf{t}^2+52 \mathbf{t}-47$&$-2 \mathbf{t}^3+18 \mathbf{t}^2-52 \mathbf{t}+47$
\end{tabular}}\vspace{5pt}
\caption{Examples of $2$-cycle invariants}
\label{S2table}
\end{table}

Examples of ribbon link invariants for cycle type $(2)$ are summarized in Table \ref{S2table}. The invariants $\rP_{(2),\triv}(\cT,\mathbf{t})$ associated to the trivial character $\triv$ are link invariants as the twist is the identity on this object. However, the invariants $\rP_{(2),\mathrm{sign}}(\cT,\mathbf{t})$ which use the sign character $\mathrm{sign}$ are ribbon link invariants.

In addition, we note that the invariants $\rP_{(2),\triv}(\cT,\mathbf{t})$ for $\cT=\cT_{2,-5}$ and $\cT_{2,-7}$ are trivial (i.e. equal to the categorical dimension). Further, the knots $\cT_{2,-9}$ and $\cT_{3,-10}$ have the same invariant as the trefoil $\cT_{2,-3}$.

\subsubsection*{\texorpdfstring{$3$}{3}-Cycle Invariants}
The next case is where $\mu=(3)$ and $\sigma=(1,2,3)\in S_3$. In this case, there are three irreducible $Z(\mu)=C_3$-modules given by the third roots of unity. We indicate the irreducible characters by $\chi^{a}$, where
\begin{align}
    \chi^{a}(123)=e^{ \tfrac{2a\pi i}{3}}, &&a=0,1,2.
\end{align}
The corresponding ribbon link polynomials are denoted by $\rP_{(3),a}(\cT,\mathbf{t})$. \Cref{S3table} contains examples of these invariants. Note that $\rP_{(3),2}(\cT,\mathbf{t})$ is obtained from $\rP_{(3),1}(\cT,\mathbf{t})$ by replacing $e^{2i \pi /3}$ with $e^{-2i \pi/3}$.

\begin{table}
{\renewcommand{\arraystretch}{1.5}
\begin{tabular}[H]{c|c|c}
Ribbon torus link $\cT$&$\frac{\rP_{(3),0}(\cT,\mathbf{t})}{\dim \uW{(3),0}}$&$\frac{\rP_{(3),1}(\cT,\mathbf{t})}{\dim \uW{(3),1}}$\\[5pt] \hline
$\cT_{2,-2}=\torusknotinv{2}{2}$&$\frac{\mathbf{t}^3}{3}-4 \mathbf{t}^2+\frac{47 \mathbf{t}}{3}-18$&$\frac{\mathbf{t}^3}{3}-4 \mathbf{t}^2+\frac{47 \mathbf{t}}{3}+e^{\frac{2 i \pi }{3}}+e^{-\frac{2 i \pi }{3}}-20$\\
$\cT_{2,-3}=\torusknotinv{2}{3}$&$3 \mathbf{t}-8$&$3 \mathbf{t}-8$ \\
$\cT_{2,-5}=\torusknotinv{2}{5}$&$3 \mathbf{t}^2-21 \mathbf{t}+37$&$(3  \mathbf{t}^2-21 \mathbf{t}+36 +e^{\frac{2 i \pi }{3}})e^{\frac{2 i \pi }{3}}$ \\
$\cT_{2,-6}=\torusknotinv{2}{6}$&$\frac{\mathbf{t}^3}{3}-4 \mathbf{t}^2+\frac{56 \mathbf{t}}{3}-27$&$\frac{\mathbf{t}^3}{3}-4 \mathbf{t}^2+\frac{56 \mathbf{t}}{3}-27$\\
$\cT_{3,-4}=\torusknotinv{3}{4}$& $3\mathbf{t}^3-36\mathbf{t}^2+144\mathbf{t}-188$ &$\left(3\mathbf{t}^3-36\mathbf{t}^2+144\mathbf{t}-188\right)e^{-\frac{2i\pi}{3}}$\\
\end{tabular}}\vspace{5pt}
\caption{Examples of $3$-cycle invariants}
\label{S3table}
\end{table}

We note that the invariant $\rP_{(3),\triv}(\cT_{2,-5},\mathbf{t})$ is no longer trivial, but  $\rP_{(3),\triv}(\cT_{2,-7},\mathbf{t})$ is still trivial as in the $2$-cycle case. Also, $\cT_{2,-9}$ still has the same invariant as the trefoil knot.

\subsubsection*{Trefoil Invariants} \Cref{Trefoil} contains the knot invariants associated to the left-handed trefoil $\cT_{2,-3}$. Here, we consider the invariants corresponding to the object $\uW{\mu,\triv}$, where $\triv$ is the idempotent associated to the trivial $Z(\mu)$-module.

\begin{table}
{\renewcommand{\arraystretch}{1.5}
\begin{tabular}[H]{c|c|c|c|c|c}
Cycle type $\mu$ &$(1)$&$(2)$&$(3)$&$(4)$&$(2,2)$\\ \hline
$\frac{\rP_{\mu,\triv}(\cT_{2,-3},\mathbf{t})}{\dim \uW{\mu,\triv}}$ &$1$&$2 \mathbf{t}-3$&$3 \mathbf{t}-8$&$2  \mathbf{t}^2-16  \mathbf{t}+37$&$4 \mathbf{t}^2-28 \mathbf{t}+49$
\end{tabular}}\vspace{5pt}
\caption{Invariants of the left-handed trefoil knot}
\label{Trefoil}
\end{table}

We observe that all invariants computed so far are divisible by the dimension of the object.


\appendix

\section{Idempotent Completion and the Monoidal Center}\label{idempotent-appendix}

In the following, we consider a pre-additive monoidal category $\cC$.
We denote by $\cC'$ the \emph{idempotent completion} (or \emph{Karoubi envelope}) of $\cC$. This category can be described as having objects $X_e:=(X,e)$, where $X$ is an object of $\cC$ and $e\colon X\to X$ is an idempotent morphism, i.e. $e^2=e$. For two objects $X_e$ and $Y_f$, the morphism space is the $\Bbbk$-vector space
\begin{equation}
\Hom_{\cC'}(X_e, Y_f) =f\Hom_{\cC}(X ,Y)e.
\end{equation}
Now if $g\in\Hom_{\cC'}(X_e,X_e)$ is idempotent, then $g=ege$ and this morphism splits as
\begin{equation}
\xymatrix{
X_e\ar[dr]_{g=ge}\ar[rr]^{g=ege}&&X_e\\
&X_g\ar[ur]_{g=eg}&
}
\end{equation}
The $\Bbbk$-linear category $\cC'$ is again monoidal with the tensor product defined by
\begin{equation}
X_e\otimes Y_f= (X\otimes Y)_{e\otimes f}.
\end{equation}
If $\cC$ is braided with braiding $\Psi$, then so is $\cC'$ with the braiding $\Psi_{X_e,Y_f}$ given by
\begin{equation}
(f\otimes e)\Psi_{X,Y}(e\otimes f)=(f\otimes e)\Psi_{X,Y},
\end{equation}
by naturality of $\Psi$. We identify $\cC$ with the monoidal full subcategory on objects of the form $X_{\ide_X}$. 

\begin{proposition}\label{idempotentcenter-prop}
The idempotent completion $\cZ(\cC)'$ of $\cZ(\cC)$ is isomorphic to a full braided monoidal subcategory of the center $\cZ(\cC')$ of the idempotent completion $\cC'$ of $\cC$.
\end{proposition}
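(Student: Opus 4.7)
The plan is to construct an explicit fully faithful braided monoidal functor $\Phi\colon \cZ(\cC)'\to\cZ(\cC')$ whose image is the desired full subcategory. On objects, I set
\[
\Phi\bigl((V,c),e\bigr) := \bigl(V_e,\tilde c\bigr),
\]
where $V_e=(V,e)\in\cC'$ and, for every object $Y_f$ of $\cC'$, the half-braiding is
\[
\tilde c_{Y_f} := (f\otimes e)\circ c_Y\circ (e\otimes f)\in \Hom_{\cC'}(V_e\otimes Y_f, Y_f\otimes V_e).
\]
On morphisms, $\Phi$ acts as the identity, since $\Hom_{\cZ(\cC)'}\bigl(((V,c),e),((V',c'),e')\bigr)$ is by definition a subspace of $e'\Hom_\cC(V,V')e = \Hom_{\cC'}(V_e,V'_{e'})$.

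Next I would verify that $\tilde c$ is a genuine half-braiding in $\cZ(\cC')$. The crucial observation is that since $e$ is a morphism in $\cZ(\cC)$, one has $(\ide_Y\otimes e)c_Y = c_Y(e\otimes \ide_Y)$ for every $Y\in\cC$; combined with naturality of $c$ in $Y$ applied to $f\colon Y\to Y$, this gives $c_Y(e\otimes f) = (f\otimes e)c_Y$. Hence $\tilde c_{Y_f}$ lands correctly in $\Hom_{\cC'}$ on both sides. Naturality of $\tilde c$ in $Y_f$ then reduces to naturality of $c$ in $Y$, because every morphism in $\cC'$ is obtained by sandwiching a morphism in $\cC$ between the appropriate idempotents. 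The tensor compatibility \eqref{tensorcomp} for $\tilde c$ follows from that of $c$ after inserting the idempotents $e$, $f$, $f'$ on both sides.

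I would then check that $\Phi$ is functorial, strictly monoidal, and braided. Functoriality is clear once one observes that the identity of $((V,c),e)$ in $\cZ(\cC)'$ is $e$, which equals $\ide_{V_e}$ in $\cZ(\cC')$. For monoidality, the half-braiding of the tensor product $\Phi((V,c),e)\otimes\Phi((V',c'),e')$ at $Y_f$ in $\cZ(\cC')$ is $(\tilde c_{Y_f}\otimes \ide_{V'_{e'}})(\ide_{V_e}\otimes \tilde{c'}_{Y_f})$, and this agrees with the restriction, via $e\otimes e'\otimes f$ and $f\otimes e\otimes e'$, of the half-braiding $(c_Y\otimes\ide_{V'})(\ide_V\otimes c'_Y)$ on the tensor product in $\cZ(\cC)$, again because $e$ commutes with $c_Y$ and $e'$ commutes with $c'_Y$. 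Braidedness is immediate from \eqref{centerbraiding}, since both braidings are given by evaluating the half-braiding of the first factor on the second.

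For fully faithfulness, a morphism $\alpha\colon (V_e,\tilde c)\to (V'_{e'},\tilde{c'})$ in $\cZ(\cC')$ is by definition an element $\alpha\in e'\Hom_\cC(V,V')e$ satisfying the intertwining condition $(\ide_{Y_f}\otimes\alpha)\tilde c_{Y_f} = \tilde{c'}_{Y_f}(\alpha\otimes\ide_{Y_f})$ for all $Y_f\in\cC'$. Specialising to $f=\ide_Y$ gives $(\ide_Y\otimes\alpha)c_Y = c'_Y(\alpha\otimes\ide_Y)$ for every $Y\in\cC$, which is exactly the condition that $\alpha$ intertwines the half-braidings $c$ and $c'$; conversely, sandwiching with $f$ recovers the general condition. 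Hence $\Hom_{\cZ(\cC')}(\Phi(\cdot),\Phi(\cdot)) = \Hom_{\cZ(\cC)'}(\cdot,\cdot)$, and since distinct objects of $\cZ(\cC)'$ clearly yield distinct underlying objects in $\cC'$, the image of $\Phi$ is a full braided monoidal subcategory of $\cZ(\cC')$ isomorphic to $\cZ(\cC)'$. The main obstacle is purely bookkeeping: tracking the numerous idempotent insertions in the monoidality and naturality checks. Every relevant identity reduces to a corresponding identity in $\cZ(\cC)$, using crucially that idempotent endomorphisms in $\cZ(\cC)$ automatically commute with half-braidings.
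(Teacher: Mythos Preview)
Your proposal is correct and follows essentially the same strategy as the paper's proof: both construct a fully faithful braided monoidal functor $\cZ(\cC)'\to\cZ(\cC')$ by restricting half-braidings along idempotents, and both verify fullness by specialising the naturality condition to objects $Y_{\ide_Y}$ coming from $\cC$.

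The only organisational difference is that the paper proceeds in two steps: it first embeds $\cZ(\cC)\hookrightarrow\cZ(\cC')$, then proves that $\cZ(\cC')$ is itself idempotent complete, and finally invokes the universal property of the Karoubi envelope to obtain the functor $\iota\colon\cZ(\cC)'\to\cZ(\cC')$. You instead write down $\Phi$ directly on all of $\cZ(\cC)'$. Your route is a bit more economical for the stated proposition, while the paper's detour through idempotent completeness of $\cZ(\cC')$ records a fact of independent interest. One small caveat: your final sentence about ``distinct objects of $\cZ(\cC)'$ clearly yield distinct underlying objects in $\cC'$'' is not literally true (e.g.\ $((V,c),0)$ and $((V,c'),0)$ for $c\neq c'$ both map to $(V_0,0)$), so $\Phi$ gives an equivalence onto its essential image rather than a strict isomorphism; the paper's formulation has the same looseness, so this is not a defect relative to the original.
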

\begin{proof}
First, we show that $\cZ(\cC)$ is a full subcategory of $\cZ(\cC')$. An object $(V,c)$ in $\cZ(\cC)$ gives an object $(V,c')$ in $\cZ(\cC')$, where the half-braiding is defined as
\begin{align}
c'_{X_e}=(e\otimes \ide_V)c_X(\ide_V\otimes e)=(e\otimes \ide_V)c_X=c_X(\ide_V\otimes e),
\end{align}
using naturality of $c$ and that $e$ is an idempotent. Further, for a morphism $g\colon X_e\to Y_f$,
\begin{align*}
c'_{Y_f}(\ide_V\otimes g)&=(f \otimes \ide_V)c_Y(\ide_V\otimes fge)\\&=(fge\otimes \ide_V)c_X=(g\otimes \ide_V)c'_{X_e},
\end{align*}
which shows naturality of $c'$. Note that the tensor compatibility of the half-braiding as in \Cref{tensorcomp} is clear by the definition of the tensor product in $\cC'$:
\begin{align*}
c'_{X_e\otimes Y_f}
&=(e\otimes f \otimes \ide_V)c_{X\otimes Y}( \ide_V \otimes e\otimes f)\\
&=(e\otimes f \otimes \ide_V)(\ide_V\otimes c_Y)(c_{X}\otimes \ide_V)( \ide_V \otimes e\otimes f)\\
&=
(e\otimes (f\otimes \ide_V)c_{Y})((e\otimes \ide_V)c_X\otimes f)\\
&=(\ide_{X_e}\otimes c'_{Y_f})(c'_{X_e}\otimes \ide_{Y_f}).
\end{align*}

Clearly, any morphism $\lambda\colon (V,c)\to (W,d)$ in $\cZ(\cC)$ induces a morphism $(V,c')\to (W,d')$ in $\cZ(\cC')$. Further, the functor is full. Indeed, any morphism $\lambda\colon (V,c')\to (W,d')$ in $\cZ(\cC')$ is given by a morphism $\lambda\colon V\to W$ in $\cC$ which commutes with the natural isomorphisms $c'_{X_e}$, $d'_{X_e}$  for any object $X_e$ of $\cC'$. In particular, this naturality holds with respect to any object $X_{\ide_X}$, and hence $\lambda$ is induced from a morphism in $\cZ(\cC)$.

Next, we prove that $\cZ(\cC')$ is idempotent complete. Let $g\colon (V_e,c)\to(V_e,c)$ be an idempotent in $\cZ(\cC')$. Then $g$ can be viewed as an idempotent endomorphism $g$ of $V$ in $\cC$ satisfying $g=ege$. For any object $X_h$, consider the restriction $d_{X_h}\colon  V_g\otimes X_h\to X_h\otimes V_g$ of $c_{X_h}$ given by
$$d_{X_h}=(\ide_{X_h}\otimes g)c_{X_h}(g\otimes \ide_{X_h})=(
\ide_{X_h}\otimes g)c_{X_h}.$$ Then $d$ is clearly still a natural isomorphism. The morphism $g=ge\colon V_e\to V_g$ becomes a morphism $(V_e,c)\to (V_g, d)$ using that $g$ commutes with the half-braiding $c$.
Similarly, $g=eg\colon V_g\to V_e$ is a morphism from $(V_g,d)$ to $(V_e, c)$ in the center of $\cC$. Hence, the idempotent $g$ splits in $\cZ(\cC)$. Hence, by the universal property of the idempotent completion of $\cZ(\cC)$, we obtain a canonical faithful additive functor $\iota\colon \cZ(\cC)'\to \cZ(\cC')$. 

The functor $\iota$ is also full. Consider a morphism $g\colon (V_e,c')\to (W_f,d')$ in $\cZ(\cC')$, where $(V_e,c')$ and $(W_f,d')$ are in the image of $\iota$, i.e.~$e\colon (V,c)\to (V,c)$ and $f\colon (W,d)\to (W,d)$ are idempotent morphisms in $\cZ(\cC)$, and $c'$, $d'$ are obtained by restricting the half-braidings $c$ and $d$ along $e$ and $f$, respectively. Then $g\colon V\to W$ is a morphism in $\cC$ such that $g=fge$, and for any object $X$ of $\cC$,
\begin{align*}
d'_X(g\otimes \ide_X)=(\ide_X\otimes g)c'_X,
\end{align*}
which implies
\begin{align*}
d_X(g\otimes \ide_X)=d_X(f\otimes \ide_X)(g\otimes \ide_X)=(\ide_X\otimes g)(\ide_X\otimes e)c_X=(\ide_X\otimes g)c_X.
\end{align*}
Thus, $g\colon (V,c)\to (W,d)$ is a morphism in $\cZ(\cC)'$, and $\iota$ is full.
It follows from construction that the functor $\iota$ is monoidal and preserves the braiding. 
\end{proof}

Note that if $\cC$ is idempotent complete, then there is an canonical equivalence $\cZ(\cC)'\simeq \cZ(\cC')$.


\bibliography{biblio}

@article {AC,
    AUTHOR = {Altsch\"uler, Daniel and Coste, Antoine},
     TITLE = {Quasi-quantum groups, knots, three-manifolds, and topological
              field theory},
   JOURNAL = {Comm. Math. Phys.},
  FJOURNAL = {Communications in Mathematical Physics},
    VOLUME = {150},
      YEAR = {1992},
    NUMBER = {1},
     PAGES = {83--107},
      ISSN = {0010-3616},
       URL = {http://projecteuclid.org/euclid.cmp/1104251784},
}

@article {AK,
    AUTHOR = {Andr\'{e}, Yves and Kahn, Bruno},
     TITLE = {Nilpotence, radicaux et structures mono\"{\i}dales},
      NOTE = {With an appendix by Peter O'Sullivan},
   JOURNAL = {Rend. Sem. Mat. Univ. Padova},
  FJOURNAL = {Rendiconti del Seminario Matematico della Universit\`a di
              Padova. The Mathematical Journal of the University of Padova},
    VOLUME = {108},
      YEAR = {2002},
     PAGES = {107--291},
      ISSN = {0041-8994},
}

@article {BD,
    AUTHOR = {Borceux, Francis and Dejean, Dominique},
     TITLE = {Cauchy completion in category theory},
   JOURNAL = {Cahiers Topologie G\'{e}om. Diff\'{e}rentielle Cat\'{e}g.},
  FJOURNAL = {Cahiers de Topologie et G\'{e}om\'{e}trie Diff\'{e}rentielle Cat\'{e}goriques},
    VOLUME = {27},
      YEAR = {1986},
    NUMBER = {2},
     PAGES = {133--146},
      ISSN = {0008-0004},
}

@article {BW,
    AUTHOR = {Barrett, John W. and Westbury, Bruce W.},
     TITLE = {Spherical categories},
   JOURNAL = {Adv. Math.},
  FJOURNAL = {Advances in Mathematics},
    VOLUME = {143},
      YEAR = {1999},
    NUMBER = {2},
     PAGES = {357--375},
      ISSN = {0001-8708},
       DOI = {10.1006/aima.1998.1800},
       URL = {https://doi.org/10.1006/aima.1998.1800},
}

@article {Cou,
    AUTHOR = {Coulembier, Kevin},
     TITLE = {Tensor ideals, {D}eligne categories and invariant theory},
   JOURNAL = {Selecta Math. (N.S.)},
  FJOURNAL = {Selecta Mathematica. New Series},
    VOLUME = {24},
      YEAR = {2018},
    NUMBER = {5},
     PAGES = {4659--4710},
      ISSN = {1022-1824},
       DOI = {10.1007/s00029-018-0433-z},
       URL = {https://doi.org/10.1007/s00029-018-0433-z},
}

@article {CO,
    AUTHOR = {Comes, Jonathan and Ostrik, Victor},
     TITLE = {On blocks of {D}eligne's category {$\underline{\rm Re}{\rm
              p}(S_t)$}},
   JOURNAL = {Adv. Math.},
  FJOURNAL = {Advances in Mathematics},
    VOLUME = {226},
      YEAR = {2011},
    NUMBER = {2},
     PAGES = {1331--1377},
      ISSN = {0001-8708},
       DOI = {10.1016/j.aim.2010.08.010},
       URL = {http://dx.doi.org/10.1016/j.aim.2010.08.010},
}

@article{Che,
       author = {Chen, Haimiao},
        title = "{The untwisted Dijkgraaf-Witten invariants of links}",
      journal = {arXiv e-prints},
     keywords = {Mathematics - Geometric Topology, 05C10, 05C25, 20B25},
         year = 2012,
        month = Sep,
          eid = {arXiv:1209.4283v4},
        pages = {arXiv:1209.4283v4},
archivePrefix = {arXiv},
       eprint = {1209.4283v4},
 primaryClass = {math.GT},
}

@incollection {Del,
    AUTHOR = {Deligne, P.},
     TITLE = {La cat\'egorie des repr\'esentations du groupe sym\'etrique {$S_t$},
              lorsque {$t$} n'est pas un entier naturel},
 BOOKTITLE = {Algebraic groups and homogeneous spaces},
    SERIES = {Tata Inst. Fund. Res. Stud. Math.},
    VOLUME = {19},
     PAGES = {209--273},
 PUBLISHER = {Tata Inst. Fund. Res., Mumbai},
      YEAR = {2007},
}

@article {Dri,
    AUTHOR = {Drinfel\cprime d, V. G.},
     TITLE = {Quantum groups},
   JOURNAL = {Zap. Nauchn. Sem. Leningrad. Otdel. Mat. Inst. Steklov.
              (LOMI)},
  FJOURNAL = {Zapiski Nauchnykh Seminarov Leningradskogo Otdeleniya
              Matematicheskogo Instituta imeni V. A. Steklova Akademii Nauk
              SSSR (LOMI)},
    VOLUME = {155},
      YEAR = {1986},
    NUMBER = {Differentsial\cprime naya Geometriya, Gruppy Li i Mekh. VIII},
     PAGES = {18--49, 193},
      ISSN = {0373-2703},
       DOI = {10.1007/BF01247086},
       URL = {https://doi.org/10.1007/BF01247086},
}

@article {DT,
    AUTHOR = {Doi, Yukio and Takeuchi, Mitsuhiro},
     TITLE = {Cleft comodule algebras for a bialgebra},
   JOURNAL = {Comm. Algebra},
  FJOURNAL = {Communications in Algebra},
    VOLUME = {14},
      YEAR = {1986},
    NUMBER = {5},
     PAGES = {801--817},
      ISSN = {0092-7872},
     CODEN = {COALDM},
       DOI = {10.1080/00927878608823337},
       URL = {http://dx.doi.org/10.1080/00927878608823337},
}

@article {DW,
    AUTHOR = {Dijkgraaf, Robbert and Witten, Edward},
     TITLE = {Topological gauge theories and group cohomology},
   JOURNAL = {Comm. Math. Phys.},
  FJOURNAL = {Communications in Mathematical Physics},
    VOLUME = {129},
      YEAR = {1990},
    NUMBER = {2},
     PAGES = {393--429},
      ISSN = {0010-3616},
       URL = {http://projecteuclid.org/euclid.cmp/1104180750},
}

@article {DPR,
    AUTHOR = {Dijkgraaf, R. and Pasquier, V. and Roche, P.},
     TITLE = {Quasi {H}opf algebras, group cohomology and orbifold models},
      NOTE = {Recent advances in field theory (Annecy-le-Vieux, 1990)},
   JOURNAL = {Nuclear Phys. B Proc. Suppl.},
  FJOURNAL = {Nuclear Physics B. Proceedings Supplement},
    VOLUME = {18B},
      YEAR = {1990},
     PAGES = {60--72 (1991)},
      ISSN = {0920-5632},
       DOI = {10.1016/0920-5632(91)90123-V},
       URL = {https://doi.org/10.1016/0920-5632(91)90123-V},
}

@ARTICLE{EO,
       author = {Etingof, Pavel and Ostrik, Victor},
        title = "{On semisimplification of tensor categories}",
      journal = {arXiv e-prints},
     keywords = {Mathematics - Representation Theory},
         year = 2018,
        month = jan,
          eid = {arXiv:1801.04409v4},
        pages = {arXiv:1801.04409v4},
archivePrefix = {arXiv},
       eprint = {1801.04409},
 primaryClass = {math.RT},
       adsurl = {https://ui.adsabs.harvard.edu/abs/2018arXiv180104409E},
      adsnote = {Provided by the SAO/NASA Astrophysics Data System}
}

@book {EGNO,
    AUTHOR = {Etingof, Pavel and Gelaki, Shlomo and Nikshych, Dmitri and
              Ostrik, Victor},
     TITLE = {Tensor categories},
    SERIES = {Mathematical Surveys and Monographs},
    VOLUME = {205},
 PUBLISHER = {American Mathematical Society, Providence, RI},
      YEAR = {2015},
     PAGES = {xvi+343},
      ISBN = {978-1-4704-2024-6},
}

@article {EM,
    AUTHOR = {Eilenberg, Samuel and MacLane, Saunders},
     TITLE = {Cohomology theory in abstract groups. {I}},
   JOURNAL = {Ann. of Math. (2)},
  FJOURNAL = {Annals of Mathematics. Second Series},
    VOLUME = {48},
      YEAR = {1947},
     PAGES = {51--78},
      ISSN = {0003-486X},
       DOI = {10.2307/1969215},
       URL = {https://doi.org/10.2307/1969215},
}

@article {FQ,
    AUTHOR = {Freed, Daniel S. and Quinn, Frank},
     TITLE = {Chern-{S}imons theory with finite gauge group},
   JOURNAL = {Comm. Math. Phys.},
  FJOURNAL = {Communications in Mathematical Physics},
    VOLUME = {156},
      YEAR = {1993},
    NUMBER = {3},
     PAGES = {435--472},
      ISSN = {0010-3616},
       URL = {http://projecteuclid.org/euclid.cmp/1104253714},
}

@incollection {Fre2,
    AUTHOR = {Freed, Daniel S.},
     TITLE = {Quantum groups from path integrals},
 BOOKTITLE = {Particles and fields ({B}anff, {AB}, 1994)},
    SERIES = {CRM Ser. Math. Phys.},
     PAGES = {63--107},
 PUBLISHER = {Springer, New York},
      YEAR = {1999},
}

@article {Jon,
    AUTHOR = {Jones, Vaughan F. R.},
     TITLE = {A polynomial invariant for knots via von {N}eumann algebras},
   JOURNAL = {Bull. Amer. Math. Soc. (N.S.)},
  FJOURNAL = {American Mathematical Society. Bulletin. New Series},
    VOLUME = {12},
      YEAR = {1985},
    NUMBER = {1},
     PAGES = {103--111},
      ISSN = {0273-0979},
       DOI = {10.1090/S0273-0979-1985-15304-2},
       URL = {https://doi.org/10.1090/S0273-0979-1985-15304-2},
}

@book {Kas,
    AUTHOR = {Kassel, Christian},
     TITLE = {Quantum groups},
    SERIES = {Graduate Texts in Mathematics},
    VOLUME = {155},
 PUBLISHER = {Springer-Verlag, New York},
      YEAR = {1995},
     PAGES = {xii+531},
      ISBN = {0-387-94370-6},
}

@preamble{
   "\def\cprime{$'$} "
}

@book {Maj1,
    AUTHOR = {Majid, Shahn},
     TITLE = {Foundations of quantum group theory},
 PUBLISHER = {Cambridge University Press, Cambridge},
      YEAR = {1995},
     PAGES = {x+607},
      ISBN = {0-521-46032-8},
       DOI = {10.1017/CBO9780511613104},
       URL = {http://dx.doi.org/10.1017/CBO9780511613104},
}

@inproceedings {Maj2,
    AUTHOR = {Majid, Shahn},
     TITLE = {Representations, duals and quantum doubles of monoidal
              categories},
 BOOKTITLE = {Proceedings of the {W}inter {S}chool on {G}eometry and
              {P}hysics ({S}rn\'\i, 1990)},
   JOURNAL = {Rend. Circ. Mat. Palermo (2) Suppl.},
  FJOURNAL = {Rendiconti del Circolo Matematico di Palermo. Serie II.
              Supplemento},
    NUMBER = {26},
      YEAR = {1991},
     PAGES = {197--206},
}

@article {Maj5,
    AUTHOR = {Majid, Shahn},
     TITLE = {Doubles of quasitriangular {H}opf algebras},
   JOURNAL = {Comm. Algebra},
  FJOURNAL = {Communications in Algebra},
    VOLUME = {19},
      YEAR = {1991},
    NUMBER = {11},
     PAGES = {3061--3073},
      ISSN = {0092-7872},
       DOI = {10.1080/00927879108824306},
       URL = {https://doi.org/10.1080/00927879108824306},
}

@article {JS,
    AUTHOR = {Joyal, Andr\'e and Street, Ross},
     TITLE = {Tortile {Y}ang-{B}axter operators in tensor categories},
   JOURNAL = {J. Pure Appl. Algebra},
  FJOURNAL = {Journal of Pure and Applied Algebra},
    VOLUME = {71},
      YEAR = {1991},
    NUMBER = {1},
     PAGES = {43--51},
      ISSN = {0022-4049},
       DOI = {10.1016/0022-4049(91)90039-5},
       URL = {https://doi.org/10.1016/0022-4049(91)90039-5},
}

@article {RT1,
    AUTHOR = {Reshetikhin, N. and Turaev, V. G.},
     TITLE = {Invariants of {$3$}-manifolds via link polynomials and quantum
              groups},
   JOURNAL = {Invent. Math.},
  FJOURNAL = {Inventiones Mathematicae},
    VOLUME = {103},
      YEAR = {1991},
    NUMBER = {3},
     PAGES = {547--597},
      ISSN = {0020-9910},
       DOI = {10.1007/BF01239527},
       URL = {https://doi.org/10.1007/BF01239527},
}

@article {RT2,
    AUTHOR = {Reshetikhin, N. Yu. and Turaev, V. G.},
     TITLE = {Ribbon graphs and their invariants derived from quantum
              groups},
   JOURNAL = {Comm. Math. Phys.},
  FJOURNAL = {Communications in Mathematical Physics},
    VOLUME = {127},
      YEAR = {1990},
    NUMBER = {1},
     PAGES = {1--26},
      ISSN = {0010-3616},
       URL = {http://projecteuclid.org/euclid.cmp/1104180037},
}

@article {JS1,
    AUTHOR = {Joyal, Andr\'e and Street, Ross},
     TITLE = {The geometry of tensor calculus. {I}},
   JOURNAL = {Adv. Math.},
  FJOURNAL = {Advances in Mathematics},
    VOLUME = {88},
      YEAR = {1991},
    NUMBER = {1},
     PAGES = {55--112},
      ISSN = {0001-8708},
       DOI = {10.1016/0001-8708(91)90003-P},
       URL = {https://doi.org/10.1016/0001-8708(91)90003-P},
}

@article {Sh,
    AUTHOR = {Shum, Mei Chee},
     TITLE = {Tortile tensor categories},
   JOURNAL = {J. Pure Appl. Algebra},
  FJOURNAL = {Journal of Pure and Applied Algebra},
    VOLUME = {93},
      YEAR = {1994},
    NUMBER = {1},
     PAGES = {57--110},
      ISSN = {0022-4049},
       DOI = {10.1016/0022-4049(92)00039-T},
       URL = {https://doi.org/10.1016/0022-4049(92)00039-T},
}

@book {TV,
    AUTHOR = {Turaev, Vladimir and Virelizier, Alexis},
     TITLE = {Monoidal categories and topological field theory},
    SERIES = {Progress in Mathematics},
    VOLUME = {322},
 PUBLISHER = {Birkh\"auser/Springer, Cham},
      YEAR = {2017},
     PAGES = {xii+523},
      ISBN = {978-3-319-49833-1; 978-3-319-49834-8},
       DOI = {10.1007/978-3-319-49834-8},
       URL = {https://doi.org/10.1007/978-3-319-49834-8},
}

@article {Whi,
    AUTHOR = {Whitehead, J. H. C.},
     TITLE = {Combinatorial homotopy. {II}},
   JOURNAL = {Bull. Amer. Math. Soc.},
  FJOURNAL = {Bulletin of the American Mathematical Society},
    VOLUME = {55},
      YEAR = {1949},
     PAGES = {453--496},
      ISSN = {0002-9904},
       DOI = {10.1090/S0002-9904-1949-09213-3},
       URL = {https://doi.org/10.1090/S0002-9904-1949-09213-3},
}

@article {FY,
    AUTHOR = {Freyd, Peter J. and Yetter, David N.},
     TITLE = {Braided compact closed categories with applications to
              low-dimensional topology},
   JOURNAL = {Adv. Math.},
  FJOURNAL = {Advances in Mathematics},
    VOLUME = {77},
      YEAR = {1989},
    NUMBER = {2},
     PAGES = {156--182},
      ISSN = {0001-8708},
       DOI = {10.1016/0001-8708(89)90018-2},
       URL = {https://doi.org/10.1016/0001-8708(89)90018-2},
}

@article {Wit,
    AUTHOR = {Witten, Edward},
     TITLE = {Quantum field theory and the {J}ones polynomial},
   JOURNAL = {Comm. Math. Phys.},
  FJOURNAL = {Communications in Mathematical Physics},
    VOLUME = {121},
      YEAR = {1989},
    NUMBER = {3},
     PAGES = {351--399},
      ISSN = {0010-3616},
       URL = {http://projecteuclid.org/euclid.cmp/1104178138},
}

@article {Yet,
    AUTHOR = {Yetter, David N.},
     TITLE = {Quantum groups and representations of monoidal categories},
   JOURNAL = {Math. Proc. Cambridge Philos. Soc.},
  FJOURNAL = {Mathematical Proceedings of the Cambridge Philosophical
              Society},
    VOLUME = {108},
      YEAR = {1990},
    NUMBER = {2},
     PAGES = {261--290},
      ISSN = {0305-0041},
       DOI = {10.1017/S0305004100069139},
       URL = {https://doi.org/10.1017/S0305004100069139},
}
\bibliographystyle{amsrefs}

\end{document}